\documentclass[11pt]{amsart}
\usepackage[utf8]{inputenc}
\usepackage[total={6.5in,9in},top=1in, left=1in, right=1in, bottom=1in]{geometry}
\usepackage{fancyhdr}
\pagestyle{fancy}
\fancyhead{}
\fancyhead[CE]{{\bf {\scriptsize Dionne Ibarra, Gabriel Montoya-Vega}}}
\fancyhead[CO]{{\bf {\scriptsize On a new Gram determinant from the M\"obius band}}}
\usepackage{color,amsmath,amssymb,mathtools,graphicx}
\usepackage{amsfonts}
\usepackage{libertine}
\usepackage[libertine]{newtxmath}
\usepackage{bbold}
\usepackage{epsfig,fancyhdr}
\usepackage{dsfont} 
\usepackage{graphicx,float}
\usepackage{graphicx}
\usepackage{epsfig,color,fancyhdr,setspace}
\usepackage{epstopdf}
\usepackage{wrapfig}
\usepackage{soul}
\usepackage{import}
\usepackage{lineno}
\usepackage{stackrel}
\usepackage[dvipsnames]{xcolor}
\usepackage[allcolors = blue,colorlinks]{hyperref}
\usepackage[abs]{overpic}
\usepackage[center]{caption}
\captionsetup[subfigure]{labelfont=rm}
\usepackage{subcaption}
\usepackage{pst-node}
\usepackage{tikz-cd}
\usepackage{enumitem}
\counterwithin{figure}{section}
\usepackage{tikz-cd}
\usepackage{wasysym}
\usepackage{marvosym}
\usepackage{extarrows}
\usepackage{amsmath}
\usepackage{graphicx,amsmath,mathtools,float}
\usepackage{setspace}
\graphicspath{{./images/}}
\usepackage{stackrel}
\usepackage{yfonts}
\usepackage{enumitem, hyperref}\makeatletter
\def\namedlabel#1#2{\begingroup
    #2%
    \def\@currentlabel{#2}%
    \phantomsection\label{#1}\endgroup
}
\makeatother
\usepackage{type1cm} 
\usepackage{blindtext}
\usepackage{scalerel,stackengine}
\usepackage{tikz}
\usepackage{mathtools}
\usepackage{amssymb}
\usepackage{tikz}
\usetikzlibrary{shapes,snakes}

\newcommand\quotient[2]{
	\mathchoice
	{
		\text{\raise1ex\hbox{$#1$}\Big/\lower1ex\hbox{$#2$}}%
	}
	{
		#1\,/\,#2
	}
	{
		#1\,/\,#2
	}
	{
		#1\,/\,#2
	}
}

\usepackage{tocbasic}
\newtheorem{theorem}{Theorem}[section]
\newtheorem{lemma}[theorem]{Lemma}
\newtheorem{definition}[theorem]{Definition}
\newtheorem{example}[theorem]{Example}
\newtheorem{proposition}[theorem]{Proposition}
\newtheorem{remark}[theorem]{Remark}

\newtheorem{corollary}[theorem]{Corollary}
\newtheorem{conjecture}[theorem]{Conjecture}

\title{On a new Gram determinant from the M\"obius band}

\author{Dionne Ibarra}
\address{School of Mathematics, Monash University, VIC 3800, Australia.}
\email{{\rm \textcolor{blue}{dionne.ibarra@monash.edu}}}

\author{Gabriel Montoya-Vega}
\address{Department of Mathematics, Queens College and The Graduate Center CUNY, NY, USA}
\email{{\rm \textcolor{blue}{gabriel.montoyavega55@gc.cuny.edu} | \textcolor{blue}{gabrielmontoyavega@gmail.com}}}

\subjclass[2020]{Primary: 57K10. Secondary: 57K31.}
\keywords{Gram determinants, knot theory, relative Kauffman bracket skein modules, knots and links.}

\begin{document}
\begin{abstract}
Gram determinants earned traction among knot theorists after E. Witten's presumption about the existence of a 3-manifold invariant connected to the Jones polynomial. Triggered by the creation of such an invariant by N. Reshetikhin and V. Turaev, several mathematicians have explored this line of research ever since. Gram determinants came into play by W. B. Raymond Lickorish's skein theoretic approach to the invariant.
The construction of different bilinear forms is possible through changes in the ambient surface of the Kauffman bracket skein module. Hence, different types of Gram determinants have arisen in knot theory throughout the years; some of these determinants are discussed here. In this article, we introduce a new version of such a determinant from the M\"obius band and prove some important results about its structure. In particular, we explore its connection to the annulus case and factors of its closed formula.
\end{abstract}

\maketitle

\tableofcontents

\section*{Acknowledgments}
The first author acknowledges the support by the Australian Research Council grant DP210103136. The second author acknowledges the support of the National Science Foundation through Grant DMS-2212736.

\section{Introduction}
Gram determinants are named after the Danish mathematician J{\o}rgen Pedersen Gram and they appear in several areas of mathematics including Riemannian geometry, finite element, and machine learning. In knot theory, Gram determinants became of interest following Edward Witten's contemplation of a 3-manifold invariant connected to the Jones polynomial \cite{Wit}. In 1991, a construction of such invariant was presented by Nicolai Reshetikhin and Vladimir Turaev \cite{RT}. Shortly afterwards, W. B. Raymond Lickorish announced a simpler approach to the construction of this invariant in which is considered to be the first modern work on Gram determinants in relation to the mathematical theory of knots \cite{Lic1}. The Gram determinant constructed by Lickorish is known as the Gram determinant of type $A$ and has been extensively studied; see for instance \cite{K-S, DiF, Cai, BIMP}. It is important to remark that in knot theory several matrices arise with a clear connection to Gram determinants. For instance, the Alexander matrix  of a link $L$ whose determinant gives the Alexander polynomial, and the Goeritz matrix which gives the determinant of the link.

\

By changing the ambient surface of the Kauffman bracket skein module, different bilinear forms can be constructed. Rodica Simion investigated bilinear forms of type $B$ while working on chromatic joins \cite{Sim, SchTypeA}. Paul Martin and Hubert Saleur were the first to consider Gram determinants of type $B$ on their work which, among others, enjoys applications to statistical mechanics \cite{MaSa1, MaSa2}. In 2008, J\'ozef H. Przytycki created the notion of Gram determinant of type $Mb$ which results from defining a bilinear form on the M\"obius band \cite{Prz2}. Although a conjecture was presented by Qi Chen during the same year, this line of research was only rigorously pursued about ten years later; see for example \cite{BIMP, BIMP2, PBIMW}. 

\ 

The article is organized as follows. In Section \ref{GramKnot} we present the definition of the relative Kauffman bracket skein module. Moreover, the Gram determinants of type $B$ and type $Mb$ are defined. We introduce the Gram determinant of type $(Mb)_1$, which arises as a modification of the type $Mb$, in Section \ref{GramMB1}. There we present the lollipop method, an innovative tool that helps to prove some important results about the structure of this new Gram determinant. Lastly, an adaptation of Chen's conjecture to this determinant is proposed in Section \ref{FutureDirections}.

\section{Gram determinants in knot theory}\label{GramKnot}
In order to study Gram determinants in knot theory, we need basis elements of a free module over a commutative ring with unity to be described by a manifold with boundary. That is, we need the notion of a relative skein module. Arguably, the most extensively investigated skein module is the Kauffman bracket skein module. In particular, its structure has shown connections between the module and the geometry and topology of the 3-manifold \cite{Prz1}. In this paper, the relative Kauffman bracket skein module plays an important role and is given in Definition \ref{rkbsm}.  
\begin{definition}\label{rkbsm}
Let $M$ be an oriented $3$-manifold and  $\{x_i\}_{1}^{2n}$ be the set of $2n$ framed points on $\partial M$. Let $I=[-1, 1]$, and let $\mathcal{L}^{\mathit{fr}}(2n)$ be the set of all relative framed links (which consists of all framed links in $M$ and all framed arcs, $I \times I$, where $ I \times \partial I$ is connected to framed points on the boundary of $M$) up to ambient isotopy while keeping the boundary fixed in such a way that $L \cap \partial M = \{x_i\}_{1}^{2n}$. Let $R$ be a commutative ring with unity,  $A \in R$ be invertible, and let $S_{2,\infty}^{\mathit{sub}}(2n)$ be the submodule of $R\mathcal{L}^{\mathit{fr}}(2n)$ that is generated by the Kauffman bracket skein relations:

\begin{itemize}
    \item [(i)]$L_+ - AL_0 - A^{-1}L_{\infty}$, and
    \item [(ii)] $L \sqcup \pmb{\bigcirc}  + (A^2 + A^{-2})L$,
\end{itemize}
where \pmb{$\bigcirc$} denotes the framed unknot and the {\it skein triple} $(L_+$, $L_0$, $L_{\infty})$ denotes three framed links in $M$ that are identical except in a small $3$-ball in $M$ where the difference is shown in Figure \ref{KBSM:skeintriple}. Then, the \textbf{relative Kauffman bracket skein module} (RKBSM) of $M$ is the quotient: $$\mathcal{S}_{2,\infty}(M, \{x_i\}_1^{2n}; R, A) = R\mathcal{L}^{\mathit{fr}}(2n) / S_{2,\infty}^{\mathit{sub}}(2n).$$ 
\end{definition}

\begin{figure}[ht]
\centering
\begin{subfigure}{.25\textwidth}
\centering
$\vcenter{\hbox{\includegraphics[scale = .35]{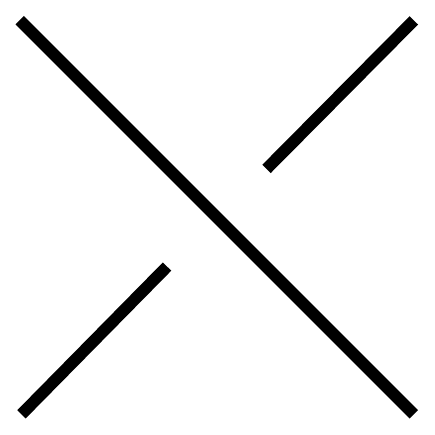}}} $
\caption{$L_+$.} \label{KBSM:Lplus}
\end{subfigure}
\centering
\begin{subfigure}{.25\textwidth}
\centering
$\vcenter{\hbox{\includegraphics[scale = .35]{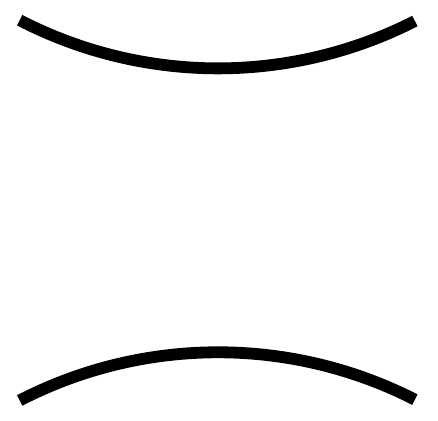}}} $
\caption{$L_0$.} \label{KBSM:Lzero}
\end{subfigure}
\centering
\begin{subfigure}{.25\textwidth}
\centering
$\vcenter{\hbox{\includegraphics[scale = .35]{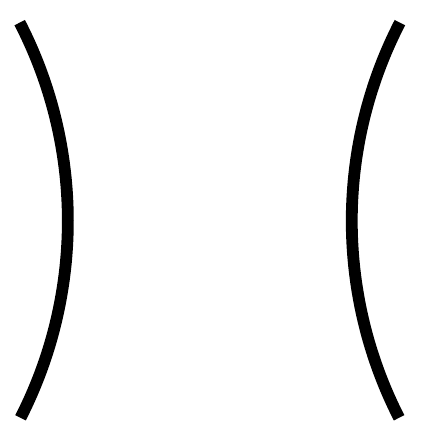}}} $
\caption{$L_{\infty}$.} \label{KBSM:Linfinity}
\end{subfigure}
\caption{The skein triple.} \label{KBSM:skeintriple}
\end{figure}

\begin{theorem}\label{Theorem:freebasis}\cite{Prz1}
Let $F$ be a surface with $\partial F \neq \varnothing$. If $F$ is orientable let $M = F\times I$, otherwise let $M = F \ \hat{\times} \ I$. Let all  the points in $\{x_i\}_1^{2n}$  be marked points that lie on $\partial F \times \{0\}$. Then $\mathcal S_{2, \infty}(M, \{x_i\}_1^{2n}; R, A)$ is a free $R$-module whose basis is composed of relative links in $F$ without trivial components. When $n=0$, the empty link is also a generator.
\end{theorem}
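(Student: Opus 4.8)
\subsection*{Proof proposal}

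The plan is to establish separately the two properties packaged in the word ``free'': first that the relative links in $F$ without trivial components (together with the empty link when $n=0$) \emph{span} the module, and then that they are \emph{linearly independent} over $R$. Throughout I would represent every element of $\mathcal{L}^{\mathit{fr}}(2n)$ by a diagram on $F$: since $M$ is an $I$-bundle over $F$, the bundle projection $M \to F$ carries a generic framed link with blackboard framing to a four-valent diagram on $F$ whose double points record the crossings, with the arcs terminating at the marked points of $\partial F \times \{0\}$. In the non-orientable case one must record that over/under information is reversed upon traversing an orientation-reversing loop of $F$; this bookkeeping is the only place where $M = F \hat{\times} I$ differs from the product case.

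For spanning, I would argue by induction on the number of crossings of such a diagram. Relation (i) rewrites any chosen crossing as an $R$-linear combination of its two smoothings $L_0$ and $L_{\infty}$, each having one fewer crossing, so iterating reduces an arbitrary diagram to a linear combination of crossingless diagrams. A crossingless diagram is a disjoint union of simple closed curves and arcs joining the marked points; each null-homotopic closed curve bounds a disk in $F$ and is removed by relation (ii) at the cost of a factor $-(A^2+A^{-2})$. What survives is exactly a relative link without trivial components, or, when $n=0$ and nothing survives, the empty link. Hence these elements span.

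The main obstacle is linear independence, which I would handle by exhibiting the reduction above as a terminating, confluent rewriting system and invoking the diamond lemma. Termination is immediate, since relation (i) strictly decreases the crossing number and relation (ii) strictly decreases the number of closed components. For local confluence one checks that the two basic sources of ambiguity resolve identically: resolving two distinct crossings in either order yields the same terms because the resolutions are supported in disjoint balls, and the skein relations are compatible with the Reidemeister II and III moves generating regular isotopy on $F$, which is precisely the classical computation underlying the invariance of the Kauffman bracket. By Newman's lemma every diagram then has a unique normal form, so distinct isotopy classes of crossingless, trivial-component-free diagrams cannot be identified in the quotient; this is the asserted independence.

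The subtle points that I expect to absorb most of the work are, first, verifying that the confluence check survives the orientation-reversing identifications in $M = F \hat{\times} I$, where one must confirm that no relation among crossingless diagrams is introduced when a curve is pushed across an orientation-reversing band; and second, ensuring that an ambient isotopy of the link in the $3$-manifold descends on $F$ to a finite sequence of Reidemeister II and III moves together with framing-preserving isotopies, so that the normal form is genuinely an invariant of the class in $\mathcal{S}_{2,\infty}(M,\{x_i\}_1^{2n};R,A)$ rather than of the chosen diagram.
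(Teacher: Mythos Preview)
The paper does not prove this theorem: it is stated with a citation to \cite{Prz1} and used as background. So there is no ``paper's own proof'' to compare against here.

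That said, your outline is the standard argument and is essentially what appears in Przytycki's original paper. The spanning step is exactly right. For linear independence, the diamond-lemma framing is the correct idea, but be aware that in practice the argument is usually phrased slightly differently: one defines a state-sum bracket $\langle D\rangle \in R[\text{crossingless diagrams without trivial components}]$ directly on diagrams, checks invariance under framed Reidemeister moves (including the modified Reidemeister~I that changes framing by $\pm 1$, which contributes $-A^{\pm 3}$), and then observes that this gives a well-defined $R$-linear map out of $R\mathcal{L}^{\mathit{fr}}(2n)$ that kills the skein relations and is the identity on the proposed basis. This is logically equivalent to your confluence check but packages the Reidemeister verification more cleanly, and it makes the ``isotopy in $M$ descends to moves on $F$'' step explicit rather than leaving it as a caveat. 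Your second subtle point is genuine: for $F$ non-orientable one must use the fact that framed links in the twisted $I$-bundle are represented by diagrams on $F$ up to the appropriate Reidemeister moves, with crossing signs flipping across orientation-reversing loops; this is handled in \cite{Prz1} and is where most of the care goes.
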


The following corollary to Theorem \ref{Theorem:freebasis} uses the language of relative skein modules that are used in the definition of Gram determinants in knot theory.

\begin{corollary}\label{Corollary:basiselements}\cite{Prz1}

\begin{enumerate}
\item [(a)] $\mathcal{S}_{2,\infty}(D^2 \times I, n) \coloneqq \mathcal{S}_{2,\infty}(D^2 \times I, \{x_i\}_1^{2n}; R, A)$ is a free $R$-module with $C_n = \frac{1}{n+1}\binom{2n}{n}$ basis elements. 

\
    
\item [(b)]$\mathcal{S}_{2,\infty}(\mathit{Ann} \times I, n) \coloneqq \mathcal{S}_{2,\infty}(\mathit{Ann} \times I, \{x_i\}_1^{2n}; R, A)$ where $\{x_i\}_1^{2n}$ are located in the outer boundary component of the annulus is a free $R[z]$-module with $D_n = \binom{2n}{n}$ basis elements, where $z$ denotes the homotopically nontrivial curve in the annulus and $d=-A^2-A^{-2}$ denotes the homotopically trivial curve in the annulus. The basis, denoted by $B_{n, 0}$, is the set of all crossingless connections in the annulus with no trivial components or boundary parallel curves. 

\ 

\item [(c)] $\mathcal{S}_{2,\infty}(Ann_{n, 1} \times I) \coloneqq \mathcal{S}_{2,\infty}(\mathit{Ann} \times I, \{x_i\}_1^{2n}\cup \{y_1, y_2\}; R, A)$, where $\{x_i\}_1^{2n}$ are located in the outer boundary component of the annulus and $\{y_1, y_2 \}$ are located in the inner boundary component of the annulus, is a free $R$-module. The standard basis contains an infinite number of elements described as follows. Elements of the form $az^i$ for $i \geq 0$, where $z$ denotes the boundary parallel curve of the annulus and $a \in \mathcal{A}$. The set $\mathcal{A}$ is a finite collection of crossingless connections with no trivial components or boundary parallel curves where $y_1$ and $y_2$ are connected to each other by a relative link. There are $2 \binom{2n}{n}$ such elements in the set. Let $X$ be a finite collection of crossingless connections between $2n-2$ points on the outer boundary that do not isolate the remaining two points on the outer boundary from a path to the inner boundary points. The rest of the standard basis is from an infinite family of crossingless connections obtained from $X$, where the remaining outer boundary points are connected to the inner boundary points after wrapping around the inner boundary by $\pi k$ for $k \in \mathbb{Z}$; an illustration of such connections can be found in Figure \ref{Figure:AnnDehntwist}. 

\ 

\item[(d)] $\mathcal{S}_{2,\infty}(Mb \ \hat{\times} \  I) \coloneqq \mathcal{S}_{2,\infty}(Mb \ \hat\times \ I, \{x_i\}_1^{2n}; R, A)$ is a free $R$-module. The standard basis contains an infinite number of elements of the form $bz^i$, $bxz^i$ for $i \geq 0$, where $x$ denotes the simple closed curve that intersects the crosscap once, $z$ denotes the boundary parallel curve of the M\"obius band, and $b$ is an element in the set of crossingless connections in the M\"obius band with no trivial components or boundary parallel curves for which the arcs do not intersect the crosscap. The rest of the elements in the standard basis are from a finite number of crossingless connections consisting of a collection of $n-k$ arcs for $0 \leq k <n$ that non-trivially intersect the crosscap. Among the finite collection there are $\binom{2n}{k}$ crossingless connections that intersect the crosscap $n-k$ times.  
\end{enumerate}

\end{corollary}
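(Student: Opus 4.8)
The plan is to derive this from Theorem \ref{Theorem:freebasis}: for each surface that theorem already furnishes a free $R$-module (or, after adjoining the nontrivial curve, a free $R[z]$-module) whose basis is the set of relative links with no trivial components, so the entire task reduces to classifying such links up to boundary-fixing isotopy in the surface at hand and counting them. In each case I would first list the isotopy classes of essential simple closed curves, then describe the crossingless arc systems on the marked points, and finally combine the two. For (a), with $F = D^2$, simple connectivity forces every closed curve to bound a disk and hence to be trivial, so a basis element is a disjoint union of non-crossing arcs joining the $2n$ boundary points; these are precisely the non-crossing perfect matchings of $2n$ points on a circle, counted by the Catalan number $C_n = \frac{1}{n+1}\binom{2n}{n}$ via the usual recursion.

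For (b), with $F = \mathit{Ann}$, the only essential simple closed curve up to isotopy is the core, giving the class $z$; since parallel copies of the core may be stacked freely while trivial circles evaluate to the scalar $d = -A^2 - A^{-2}$ by relation (ii), the module is free over $R[z]$ with $z$ recording the number of core curves. A basis element over $R[z]$ then consists only of arcs with no trivial or boundary-parallel closed components, i.e. an element of $B_{n,0}$. To obtain $|B_{n,0}| = \binom{2n}{n}$ I would set up a bijection that records, for each arc, the additional datum of which side of the hole it passes; this extra routing freedom (already visible for $n=1$, where the single chord may run on either side of the hole, giving $2 = \binom{2}{1}$ connections) upgrades the Catalan count to the central binomial coefficient $D_n = \binom{2n}{n}$.

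For (c), with the extra marked pair $\{y_1, y_2\}$ on the inner boundary, the decisive new phenomenon is that an arc running from the inner to the outer boundary may be twisted about the core arbitrarily often without creating crossings or trivial components, so the module is free over $R$ but with an infinite standard basis. I would organize the classification by the arcs incident to $\{y_1, y_2\}$: either $y_1$ and $y_2$ are joined to one another, leaving an annular matching of the outer points to which powers of $z$ may be appended, producing the $2\binom{2n}{n}$ families $a z^i$; or both inner points connect to the outer boundary, in which case, after fixing a non-crossing system $X$ on the remaining $2n-2$ outer points that leaves the inner boundary accessible, the two connecting arcs may wrap by $\pi k$ for $k \in \mathbb{Z}$, yielding the remaining infinite families illustrated in Figure \ref{Figure:AnnDehntwist}.

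For (d), with $M = Mb \ \hat{\times} \ I$, the essential curves are generated by the core $x$ of the band, which meets the crosscap once, and the boundary-parallel curve $z$, and the governing invariant is the number of transverse intersections an arc makes with the crosscap. I would stratify the crossingless connections by this number: those whose arcs avoid the crosscap reproduce the annular families but now with $x$ available, giving $b z^i$ and $b x z^i$ for $i \ge 0$; those whose arcs meet the crosscap are rigidified by the non-orientability, and counting the ways to distribute the $2n$ points between $n - k$ crosscap-crossing arcs and $k$ side-matched arcs yields the stated $\binom{2n}{k}$ connections for each $0 \le k < n$. This Möbius-band step is the main obstacle: the free-basis theorem only supplies \emph{a} basis abstractly, so the real work is the isotopy analysis in the non-orientable setting --- showing that any pair of consecutive crosscap intersections can be removed by passing the strand across the crosscap, so that the crosscap-crossing connections are genuinely finite, while the crosscap-avoiding ones carry the infinite $z$- and $x$-families. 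Checking that the resulting list is simultaneously $R$-spanning and $R$-linearly independent, rather than merely spanning, is where the most care is required, and the same independence issue reappears for the infinite Dehn-twist families in (c).
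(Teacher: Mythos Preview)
The paper does not give a proof of this corollary: it is stated with attribution to \cite{Prz1} and presented as a corollary to Theorem~\ref{Theorem:freebasis}, with no further argument. Your plan---invoke Theorem~\ref{Theorem:freebasis} for freeness and then enumerate the crossingless relative links surface by surface---is precisely the intended derivation, and your case-by-case outline is sound.

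One small caution on part (b): the phrase ``records, for each arc, the additional datum of which side of the hole it passes'' is not literally a bijection, since an arc joining two adjacent outer points has only one isotopy class regardless of the hole. The standard way to reach $\binom{2n}{n}$ is either the lattice-path encoding (steps $\pm 1$ from height $0$ back to height $0$ with no floor constraint) or the ``children's game'' argument alluded to later in the paper; your $n=1$ check is correct but the mechanism you describe does not scale as stated. This is a detail of the count, not of the overall strategy, and the remaining parts (a), (c), (d) are handled correctly at the level of a sketch.
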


\subsection{The Gram determinant of type $\boldsymbol{B}$} The origins of the Gram determinant of type $B$ can be found in \cite{MaSa1}. The creation of this type of Gram determinants comes from the Blob algebra (see \cite{MaSa2}) that is associated to the transfer matrix formulation of statistical mechanics on arbitrary lattices. 
The knot theoretic interest originated from the work of R. Simion \cite{Sim} on chromatic joins when Q. Chen and J. H. Przytycki in \cite{Ch-P2} showed a connection to the Gram matrix of the Temperley-Lieb algebra and the matrix of chromatic joins. Furthermore, Q. Chen and J. H. Przytycki in \cite{CP, Che} used skein modules, the Jones-Wenzl idempotents, and Chebyshev polynomials to prove a closed formula for type $B$. The definition we give will use the language of skein modules.

\begin{definition} 
Let $B_{n, 0} = \{b_1, b_2, \ldots, b_{\binom{2n}{n}} \}$ be the set of all diagrams of crossingless connections between $2n$ marked points on the outer boundary of $\mathit{Ann} \times \{0\}$ in $\mathit{Ann} \times I$. Define the bilinear form $\langle \ \ , \ \rangle_{B}$ for type $B$ as follows:
$$\langle \ \ , \ \rangle_{B} : \mathcal{S}_{2,\infty}(\mathit{Ann} \times I, \{x_i\}_1^{2n}; R, A) \times \mathcal{S}_{2,\infty}(\mathit{Ann} \times I, \{x_i\}_1^{2n}; R, A) \longrightarrow R[d,z].$$

Given $b_i, b_j \in B_{n, 0}$, we glue $b_i$ with the inversion of $b_j$ along the marked outer boundary, respecting the labels of the marked points. The result is an element in $\mathit{Ann}\times I$ containing only disjoint simple closed curves which are either homotopically non-trivial (denoted by $z$), or null homotopic (denoted by $d$). Then, $\langle b_i , b_j\rangle_{B} \coloneqq z^k d^m$, where $k$ and $m$ denote the number of these curves, respectively.

The Gram matrix of type $B$ is defined as $G_n^{B} = (\langle b_i , b_j\rangle_{B}) _{1 \leq i, j \leq \binom{2n}{n}}$, and its determinant $D_n^B$ is called the Gram determinant of type $B$.
\end{definition}

\begin{example} An example of the bilinear form on two elements in $B_{5, 0}$ is given below.
$$\left\langle \vcenter{\hbox{\begin{overpic}[scale = .2]{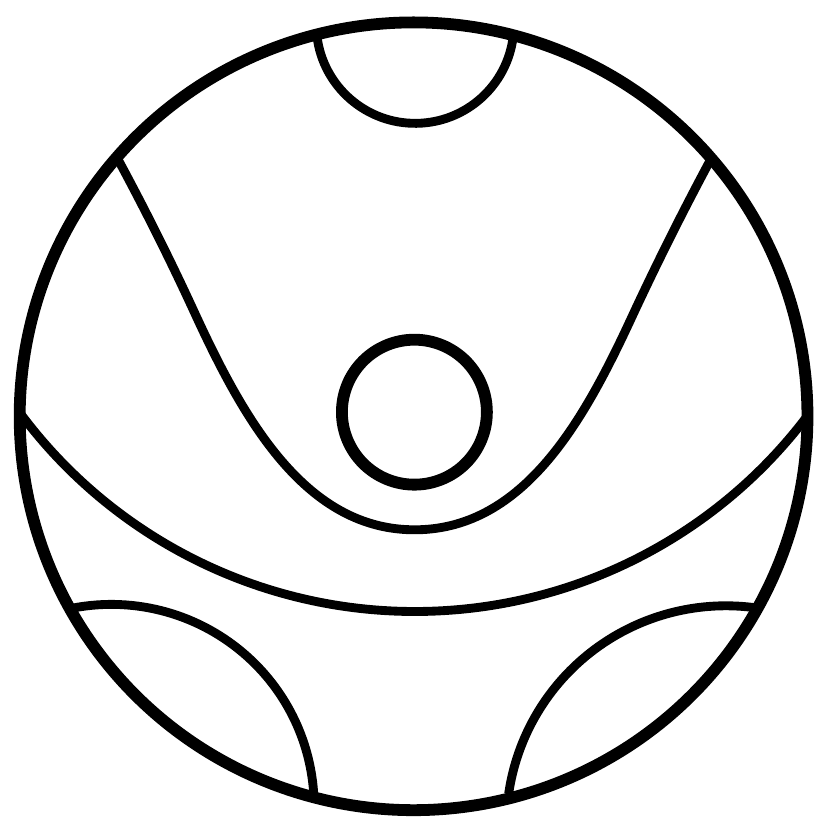}
\end{overpic} }}, \vcenter{\hbox{\begin{overpic}[scale = .2]{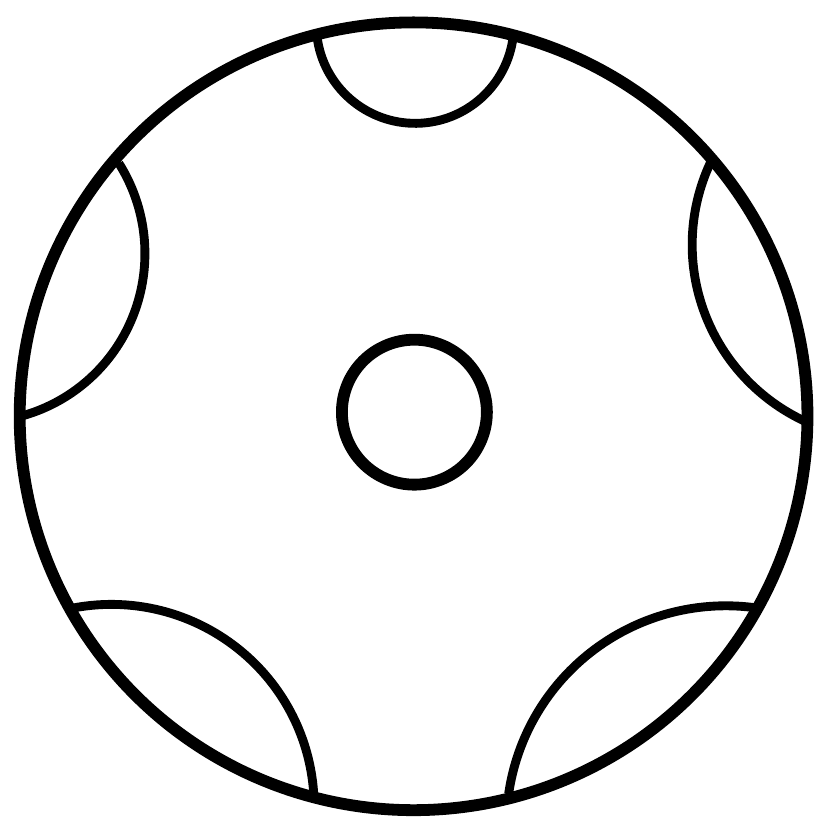}
\end{overpic} }} \right\rangle_{B} = \vcenter{\hbox{\begin{overpic}[scale = .2]{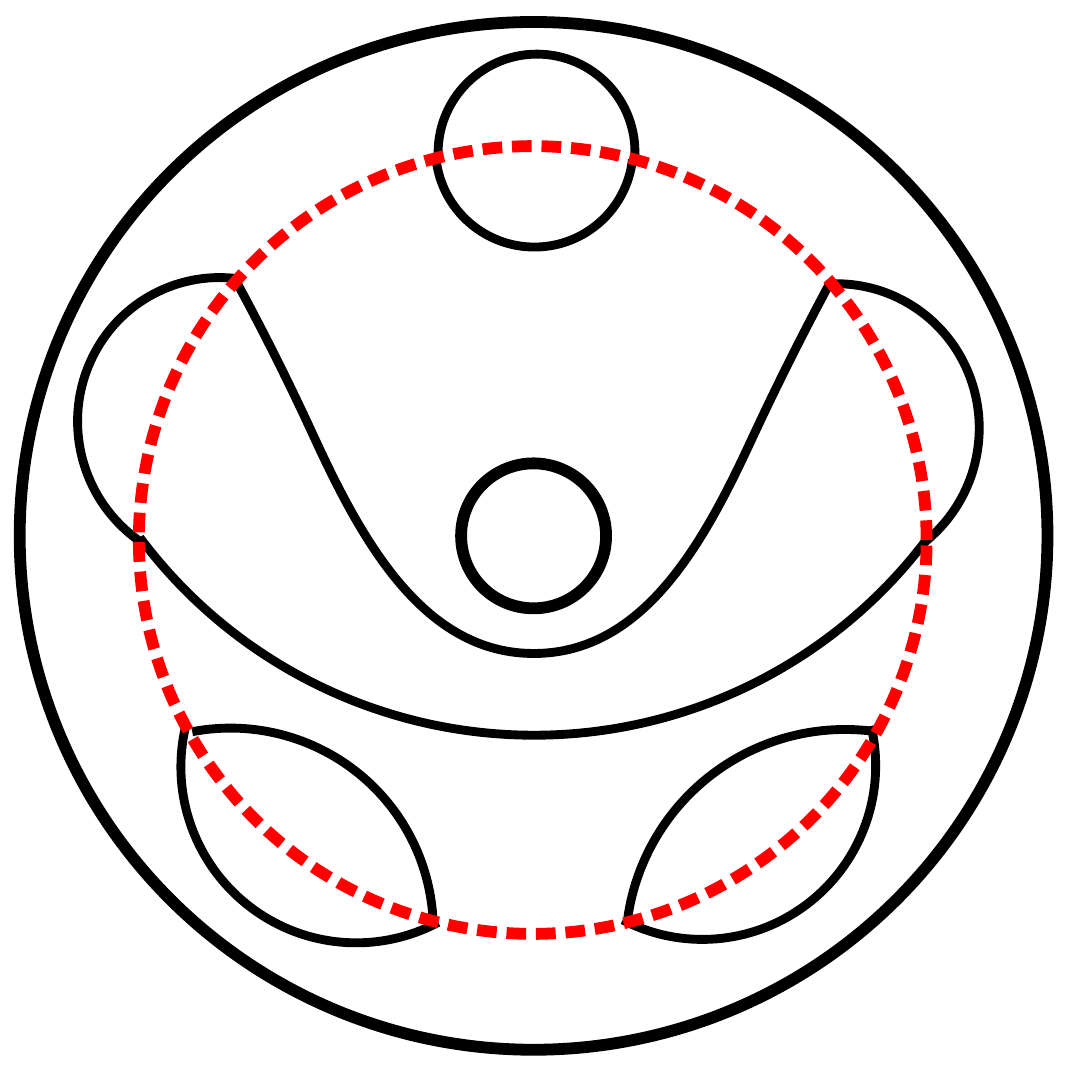}
\end{overpic} }} = d^4. $$
\end{example}

\begin{theorem}\cite{CP}\cite{MaSa1} \label{Theorem:TypeB}
Let $R = \mathbb{Z}[A^{\pm1}]$. Then 
$$D_n^{B}=  \prod \limits_{i=1}^n(T_i(d)^2-z^2)^{\binom{2n}{n-i}},$$
where $T_i(d)$ is the Chebyshev polynomial of the first kind recursively defined by $T_{n+1}(d)=d T_{n}(d)- T_{n-1}(d)$, with initial conditions $T_{0}(d)=2$ and $T_{1}(d)=d$, where $d = -A^2 -A^{-2}$.
\end{theorem}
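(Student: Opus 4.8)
The plan is to compute $\det G_n^B$ by reducing the form to blocks along a basis adapted to the Jones--Wenzl idempotents, following the strategy Lickorish used for type $A$ but now carrying along the homotopically nontrivial variable $z$ that distinguishes the annulus. The computation of type $A$ Gram determinants evaluates closed Jones--Wenzl idempotents to Chebyshev data; the essential new phenomenon here is that the core curve of the annulus does not close up to a scalar, so $z$ survives and ultimately produces the quadratic factors $T_i(d)^2-z^2$.

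First I would fix the algebraic framework. I identify $\mathcal{S}_{2,\infty}(\mathit{Ann}\times I,\{x_i\}_1^{2n})$ with the diagram module of the annular Temperley--Lieb algebra and recognize $\langle\ ,\ \rangle_B$ as its cellular bilinear form: gluing $b_i$ to the inversion of $b_j$ and closing up in the annulus is a Markov-type trace recording trivial loops as $d$ and core loops as $z$. The diagram basis $B_{n,0}$ carries a filtration by \emph{through-degree} $i$ (roughly, the number of arcs that are forced to wrap the core), and the Jones--Wenzl projectors $f_i$ furnish idempotents compatible with this filtration. Because the $f_i$ are mutually orthogonal and absorb turnbacks, the change of basis from $B_{n,0}$ to an $f_i$-adapted basis is unitriangular with respect to through-degree, hence has unit determinant and does not change $\det G_n^B$, while it puts the Gram matrix into block form indexed by $i$. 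The determinant then factors as a product over the through-degrees of the determinants of the cell-module forms, each raised to the multiplicity with which that through-degree occurs.

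Next I would evaluate a single block. On the through-degree $i$ block the only surviving skein data are the closed evaluations of $f_i$, which yield the first-kind Chebyshev value $T_i(d)$ normalized by $T_0(d)=2$ and $T_1(d)=d$, together with insertions of the core curve contributing $z$. The key structural point is the reflection involution of the annulus: it commutes with the form, and decomposing the block into its $\pm 1$-eigenspaces diagonalizes the core-curve action, so that the block determinant factors as a product of $\bigl(T_i(d)-z\bigr)$ and $\bigl(T_i(d)+z\bigr)$ contributions, i.e.\ as a power of $T_i(d)^2-z^2$. Assembling over $i=1,\dots,n$ and counting the multiplicity of each through-degree by the standard ballot/path count gives the exponent $\binom{2n}{n-i}$, and the identity $\sum_{i=1}^n 2i\binom{2n}{n-i}=n\binom{2n}{n}$ matches the total degree of $\det G_n^B$ (each diagonal entry has total degree $n$), serving as a consistency check.

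The main obstacle is the block evaluation in the presence of $z$. Unlike type $A$, I cannot simply read off a scalar: I must prove that on each through-degree block the core-curve action is diagonalizable with the two eigenvalue families $T_i(d)\pm z$ and determine the exact multiplicities, which requires controlling both the closed idempotent evaluations and the combinatorics of the filtration simultaneously. I expect the cleanest route to be an induction on $n$ that exploits the Chebyshev recursion $T_{i+1}(d)=dT_i(d)-T_{i-1}(d)$ to relate the degree-$i$ data at level $n$ to level $n-1$; alternatively, one can try to express $D_n^B$ through the already-known type $A$ determinant by partially closing the annulus, reducing the problem to tracking how the extra variable $z$ deforms the type $A$ answer.
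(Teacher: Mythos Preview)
The paper does not give its own proof of this theorem; it is quoted from \cite{CP} and \cite{MaSa1}. The one-sentence summary it offers of Chen--Przytycki's argument is the \emph{lollipop method}: build a linear map on $B_{n,0}$ that decorates the inner boundary component with the $k$th Jones--Wenzl idempotent, and prove that its image lies in a subspace of dimension $\binom{2n}{n}-\binom{2n}{n-k}$. This bounds the nullity of $G_n^B$ at the specialization $z=(-1)^{k-1}T_k(d)$ from below by $\binom{2n}{n-k}$, giving divisibility by the corresponding linear factor; the companion factor and the equality then follow from symmetry and a degree count. Your outline is a genuinely different route, closer to the cellular/blob-algebra methods of \cite{MaSa1,MaSa2}: rather than specialize $z$ and bound ranks, you try to block-diagonalize $G_n^B$ directly along a through-degree filtration and then evaluate each block. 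The lollipop argument is more elementary (no cell modules, only a codimension estimate after substitution) and, crucially, it is precisely the method this paper adapts in Section~\ref{GramMB1} to the M\"obius setting; your approach would give finer module-theoretic information but is heavier to set up.

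Your plan has a real gap at exactly the place you flag. The assertion that a ``reflection involution of the annulus \dots\ diagonalizes the core-curve action'' so that each block determinant is a pure power of $T_i(d)^2-z^2$ is not substantiated: you have not specified the involution, described its eigenspaces on the through-degree~$i$ cell module, or explained why the residual Gram form on each eigenspace has determinant exactly a power of $T_i(d)\pm z$ (a priori further type~$A$ factors could appear, and you would need to show they are absent or cancel). In Chen--Przytycki this difficulty never arises, since one only needs a rank bound after specialization rather than an explicit diagonalization; in Martin--Saleur it is handled through the representation theory of the blob algebra, which you gesture at but do not actually invoke. If you want to push your approach through, either import the blob-algebra cell theory explicitly or reduce each eigenblock to a type~$A$ computation; otherwise the lollipop/nullity argument is both shorter and the one aligned with the paper's later use.
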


Q. Chen and J. H. Przytycki's proof of a closed formula for type $B$ involves the creation of a linear map on the basis $B_{n, 0}$ that uses the lollipop method to decorate the inner boundary component with the Jones-Wenzl idempotent, then proving that the image of the linear map under the basis is a subspace of dimension $\binom{2n}{n} - \binom{2n}{n-k}$.

\subsection{The Gram determinant of type $\boldsymbol{Mb}$}

J\'ozef H. Przytycki constructed the notion of the Gram determinant of type $Mb$ in 2008. This originates from the study of crossingless connections on a M\"obius band. Here the bilinear form is defined through the identification of two M\"obius bands along their boundaries. This determinant is given in Definition \ref{TypeMB}.

\begin{definition}\label{TypeMB} 
Let $\mathit{Mb}_n = \{ m_1, \dots, m_{\sum_{k=0}^{n} \binom{2n}{k}}\}$  be the set of all diagrams of crossingless connections between $2n$ marked points on the boundary of the M\"obius band $\mathit{Mb} \ \hat{\times} \ \{0\}$ in $\mathit{Mb} \  \hat{\times} \ I$. 
Define a bilinear form $\langle \ , \ \rangle_{\mathit{Mb}}$ on the elements of $\mathit{Mb}_n$ as follows:
$$ \langle \ , \ \rangle_{\mathit{Mb}} : \mathcal{S}_{2,\infty}(\mathit{Mb}\  \hat \times \ I, \{x_i\}_1^{2n}) \times \mathcal{S}_{2,\infty}(\mathit{Mb}\ \hat \times \  I, \{x_i\}_1^{2n}) \longrightarrow \mathbb{Z}[d,w,x,y,z].$$

Given $m_i, m_j \in \textit{Mb}_n$, identify the boundary component of $m_i$ with that of the inversion of $m_j$, respecting the labels of the marked points. The result is an element in $Kb \ \hat{\times} \ I$ containing only disjoint simple closed curves. The five homotopically distinct simple closed curves in the Klein bottle, including the homotopically trivial curve, are denoted by $x, y, z, w, d$ as illustrated in Figure \ref{Klein}. Then, $\langle m_i , m_j\rangle_{\mathit{Mb}} \coloneqq  d^mx^ny^kz^lw^h$ where $m,n,k,l$ and $h$ denote the number of these curves, respectively.

The Gram matrix of type $\mathit{Mb}$ is defined as $\ G_n^{\mathit{Mb}} = (\langle m_i , m_j\rangle_{\mathit{Mb}})_{1 \leq i, j \leq \sum_{k=0}^{n} \binom{2n}{k}}$ and its determinant $D_n^{\mathit{Mb}}$ is called the Gram determinant of type $\mathit{Mb}$.

\begin{figure}[ht]
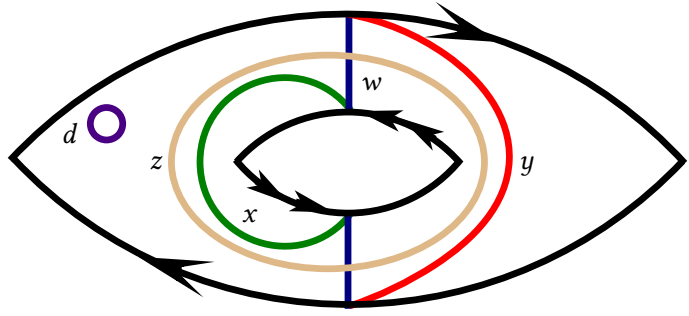

    \centering
    $$ \vcenter{\hbox{
\begin{overpic}[scale = 3]{Kleinbottle.pdf}
\put(37, 80){$d$}
\put(70, 70){$z$}
\put(105, 50){$x$}
\put(150, 100){$w$}
\put(210, 70){$y$}
\end{overpic} }}$$
    \caption{Klein bottle and its five homotopically distinct simple closed curves.}
    \label{Klein}
\end{figure}

\end{definition}

\begin{example}\label{example:mbn} An example of the bilinear form on two elements in $Mb_4$ is given below, where $\vcenter{\hbox{\begin{overpic}[scale = .2]{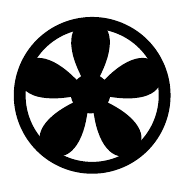}
\end{overpic}}}$ denotes a crosscap usually denoted by $\vcenter{\hbox{\begin{overpic}[scale = 1]{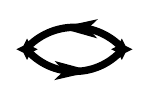} \end{overpic}}}.$
$$\left\langle \vcenter{\hbox{\begin{overpic}[scale = .2]{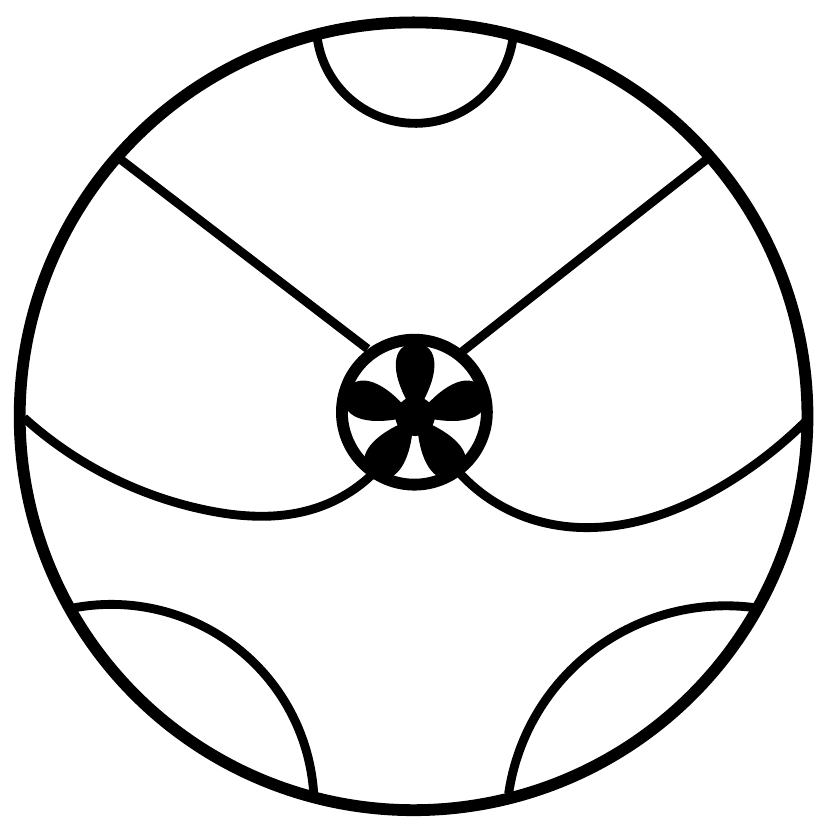}
\end{overpic}}}, \vcenter{\hbox{\begin{overpic}[scale = .2]{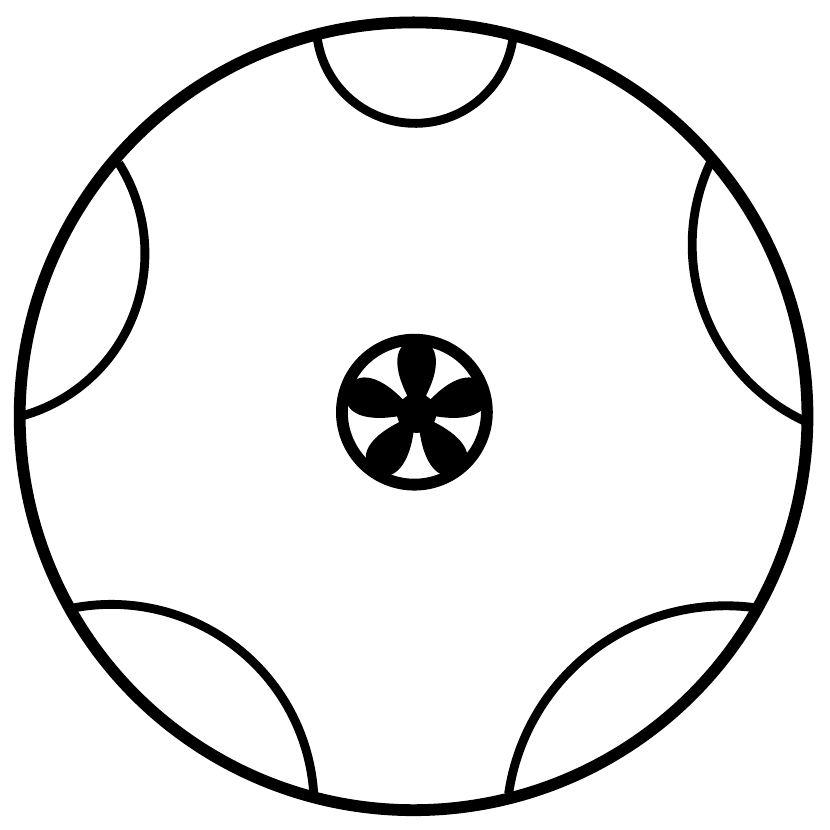}
\end{overpic}}} \right\rangle_{B} = \vcenter{\hbox{\begin{overpic}[scale = .2]{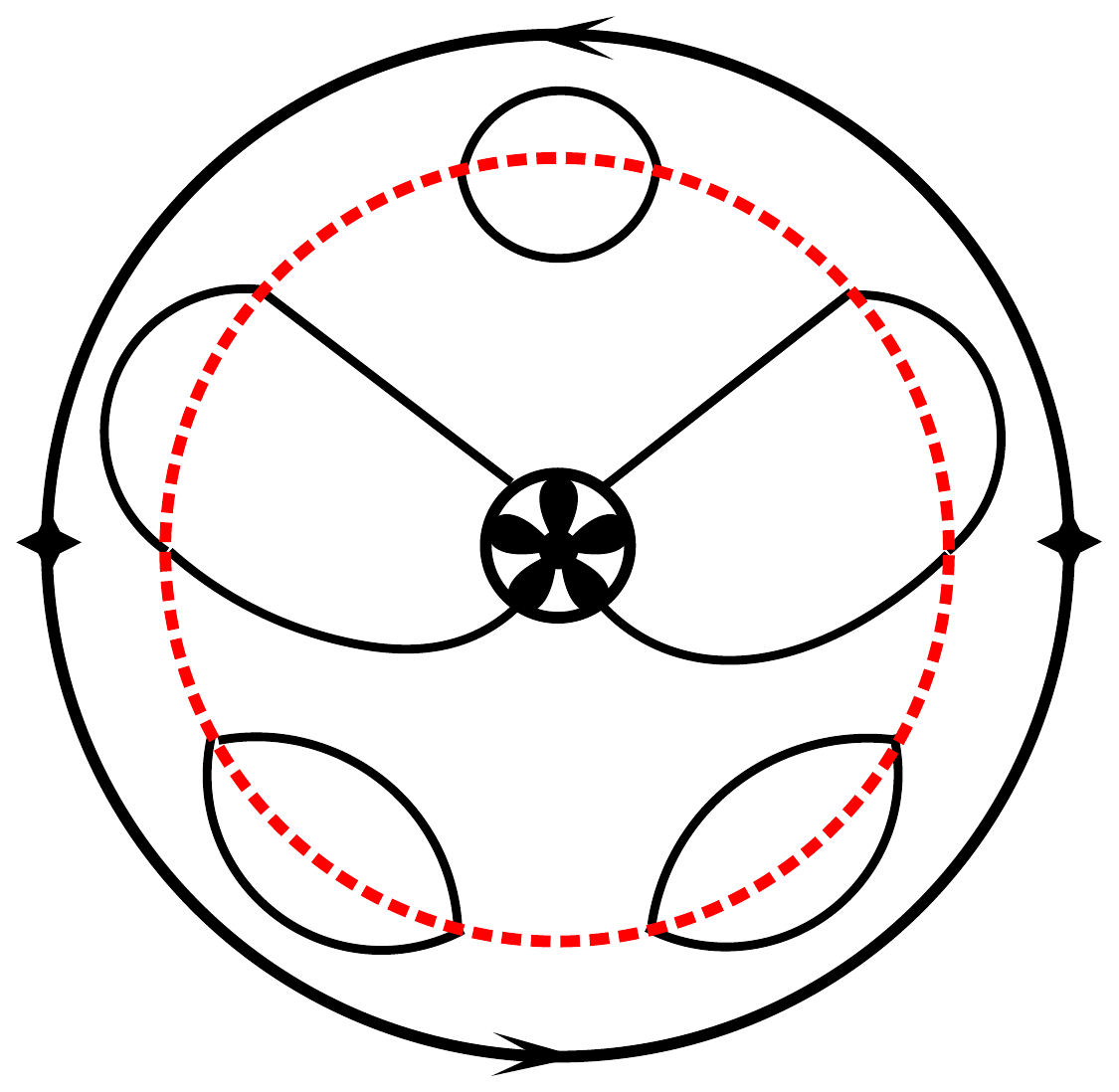}
\end{overpic} }} = d^4. $$
\end{example}

Q. Chen conjectured the following result for the Gram determinant of type $\mathit{Mb}$. Some work supporting this conjecture can be found in \cite{BIMP, BIMP2, PBIMW}. 

\begin{conjecture}[Chen] \label{Qi}\

Let $R = \mathbb{Z}[A^{\pm 1},w,x,y,z].$ Then
the Gram determinant of type {\it Mb} for $n \geq 1$, denoted by $D_n^{(\mathit{Mb})}$, is:
 \begin{eqnarray*}
D^{(\mathit{Mb})}_n(d,w,x,y,z) &=& \prod_{k=1}^n (T_k(d)+(-1)^kz)^{\binom{2n}{n-k} } \\
& & \prod\limits_{ \substack{k=1 \\ k\text{ odd }}}^n
\left((T_k(d) - (-1)^k z)T_k(w) -2xy\right)^{\binom{2n}{n-k}} \\
& & \prod\limits_{ \substack{k=1 \\ k\text{ even }}}^n
\left((T_k(d) - (-1)^kz)T_k(w)-2(2-z)\right)^{\binom{2n}{n-k}} \\
& & \prod_{i=1}^{n-1} D_{n,i},
\end{eqnarray*}

where $D_{n,i} = \prod\limits_{k=1+i}^n (T_{2k}(d)-2)^{\binom{2n}{n-k}}$, and $i$ represents the number of curves passing through the crosscap.
\end{conjecture}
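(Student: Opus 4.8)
The plan is to adapt the skein-theoretic strategy that Chen and Przytycki used for type $B$ to the richer setting of the M\"obius band. First I would fix the standard basis of $\mathcal{S}_{2,\infty}(Mb\ \hat\times\ I)$ described in Corollary \ref{Corollary:basiselements}(d), organizing the crossingless connections according to two statistics: the number $k$ of through-strands joining the marked points toward a neighborhood of the crosscap, and the number $i$ of arcs that nontrivially intersect the crosscap. These two statistics are exactly what index the four families of factors in the conjectured formula, so the first task is to show that $G_n^{Mb}$ respects this stratification. Next, following the lollipop method, I would define a linear endomorphism of the free module that inserts the $k$-th Jones-Wenzl idempotent $f_k$ on the $k$ through-strands before they reach the crosscap and push it inward as in the type $B$ argument.

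The key structural claim is a change of basis, triangular with respect to the $k$-filtration, under which $G_n^{Mb}$ becomes block diagonal: one block for each value of $k$ and, within the crosscap-intersecting part, for each value of $i$. The exponents $\binom{2n}{n-k}$ should then emerge as the dimensions of the corresponding graded pieces, matching the count in Corollary \ref{Corollary:basiselements}(d) of connections with a prescribed number of crosscap intersections.

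The heart of the computation is the evaluation of each block. After inserting $f_k$, each inner product reduces to a decorated diagram in the Klein bottle obtained from the doubling identification, hence to a monomial in $d,w,x,y,z$. Using the Chebyshev recursion $T_{k+1}=dT_k-T_{k-1}$ together with the standard identity for closing an idempotent around a nontrivial curve, I would compute the eigenvalue attached to each block. I expect the factor $T_k(d)+(-1)^k z$ to come from closing $f_k$ around the boundary-parallel curve, with the orientation-reversing identification supplying the sign $(-1)^k$; the mixed factors involving $T_k(w)$ together with $2xy$ (for $k$ odd) versus $2(2-z)$ (for $k$ even) should arise from how $f_k$ interacts with the curve $x$ that meets the crosscap once, the parity reflecting whether an odd or even number of strands is forced through the crosscap. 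The product $\prod_{i=1}^{n-1}D_{n,i}$ with $D_{n,i}=\prod_{k=1+i}^n (T_{2k}(d)-2)^{\binom{2n}{n-k}}$ should record the subfamily in which $i$ arcs genuinely pass through the crosscap, with $T_{2k}(d)-2$ the familiar doubled-curve eigenvalue and the lower limit $1+i$ encoding that the idempotent must carry at least $i$ through-strands.

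The main obstacle will be the crosscap--idempotent interaction, and in particular controlling the parity-dependent factors. Unlike type $B$, where gluing two annuli yields only two curve classes ($d$ and $z$), the doubled M\"obius band is a Klein bottle with five homotopy classes, so a single decorated diagram can evaluate to a genuine polynomial rather than a pure power of $z$. Proving that $f_k$ slides through the crosscap up to the predicted scalar $(-1)^k$, and that the off-diagonal contributions cancel to leave a diagonal block, requires a careful analysis of how the Jones-Wenzl idempotent behaves along the orientation-reversing path around the crosscap---precisely the step with no analogue in the orientable case. I would isolate this as a separate lemma, establishing it first for small $k$ by direct skein manipulation and then in general by induction on $k$ via the Jones-Wenzl recursion, and I expect this lemma to be the crux of the entire argument.
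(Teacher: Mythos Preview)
The statement you are attempting to prove is a \emph{conjecture}: the paper contains no proof of it. What the paper does establish are the partial divisibility results of Propositions~\ref{GRAMMB:Prop1}--\ref{GRAMB}, and even its main theorem (Theorem~\ref{MainTheorem}) concerns only the restricted determinant $D_n^{(Mb)_1}$ and only the first of the four families of factors. So there is no ``paper's own proof'' to compare against; your proposal is a research plan toward an open problem.

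That said, your plan has a genuine gap at precisely the point you yourself flag. The step you call the crux---that the Jones--Wenzl idempotent ``slides through the crosscap up to the predicted scalar $(-1)^k$'' and that off-diagonal contributions cancel---is not a lemma awaiting a routine induction; it is the obstruction that has kept Chen's conjecture open. The paper makes this explicit: the Remark following Lemma~\ref{lemma:bijection} observes that once \emph{two or more} arcs intersect the crosscap, the bijection $\varphi$ to the annular model no longer preserves the bilinear form (a pair of crosscap-crossing arcs can close to either $d$ or $z$ in the Klein bottle, whereas the annular side sees only $x^2$). This is exactly why the paper retreats to the sub-basis $(Mb_n)_1$ with at most one crosscap arc, and why it introduces the white-marker machinery just to salvage the lollipop method there. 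Your assertion that $G_n^{Mb}$ becomes block diagonal after a triangular change of basis indexed by $(k,i)$ is therefore unsupported: the non-orientable gluing mixes the $i$-strata in a way the type-$B$ argument never has to confront, and no mechanism in your outline explains why the mixing should vanish. Absent a concrete identity governing $f_k$ across the crosscap---not just for small $k$ but uniformly---the block-diagonalization and hence the eigenvalue computation remain conjectural.
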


\begin{proposition}\cite{BIMP2}\label{GRAMMB:Prop1}
$D^{\mathit{Mb}}_n$ is divisible by $(d-z)^{\binom{2n}{n-1}}$.
\end{proposition}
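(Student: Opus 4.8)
The plan is to show that specializing $d=z$ forces the rank of the Gram matrix $G_n^{\mathit{Mb}}$ to drop by at least $\binom{2n}{n-1}$, and then to invoke the standard principle that the order of vanishing of a determinant along the hypersurface $\{d=z\}$ is at least the corank of the specialized matrix. Since every entry of $G_n^{\mathit{Mb}}$ is a monomial $d^m x^n y^k z^l w^h$, we may treat $d,w,x,y,z$ as independent indeterminates, so $G_n^{\mathit{Mb}}\in M_N(\mathbb{Z}[d,w,x,y,z])$ with $N=\sum_{k=0}^n\binom{2n}{k}$ and $D_n^{\mathit{Mb}}=\det G_n^{\mathit{Mb}}$. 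Working over $F=\mathbb{Q}(w,x,y,z)$ with $d$ as the distinguished variable, I would set $\overline{G}=G_n^{\mathit{Mb}}\big|_{d=z}$ and reduce the proposition to the claim $\dim_F\ker\overline{G}\ge\binom{2n}{n-1}$.

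First I would record the reduction precisely. Choosing constant matrices $P,Q\in GL_N(F)$ with $P\overline{G}Q=\mathrm{diag}(I_r,0)$, where $r=\mathrm{rank}\,\overline{G}$, the matrix $B(d)=P\,G_n^{\mathit{Mb}}\,Q$ satisfies $B(z)=\mathrm{diag}(I_r,0)$, so its bottom $N-r$ rows vanish at $d=z$ and are each divisible by $(d-z)$ in $F[d]$. Hence $(d-z)^{N-r}\mid\det B=(\det P\det Q)\,D_n^{\mathit{Mb}}$, and since $d-z$ is primitive in $\mathbb{Z}[d,w,x,y,z]$ the divisibility descends to the integral ring. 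Thus it suffices to exhibit $\binom{2n}{n-1}$ independent kernel vectors of $\overline{G}$.

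Second I would produce those vectors. By Corollary \ref{Corollary:basiselements}(d) there are exactly $\binom{2n}{n-1}$ crossingless connections meeting the crosscap in a single arc (the case $n-k=1$), and these form the natural index set. Mirroring the Chen--Przytycki strategy behind Theorem \ref{Theorem:TypeB}, I would attach a Jones--Wenzl idempotent $f_1$ to this single through-strand by the lollipop method, obtaining for each such diagram an element $v\in\mathcal{S}_{2,\infty}(\mathit{Mb}\ \hat\times\ I)$. The key point is that in the one-through-strand sector the gluing of $v$ against any basis diagram produces, after absorbing the idempotent, the scalar $T_1(d)-z=d-z$; this is exactly the $k=1$ factor $T_1(d)+(-1)^1z=d-z$ of Conjecture \ref{Qi}, matching the degeneration of the $i=1$ factor $d^2-z^2$ of $D_n^B$ in Theorem \ref{Theorem:TypeB}. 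Hence every such $v$ lands in $\ker\overline{G}$ once $d=z$. Linear independence of the $\binom{2n}{n-1}$ vectors would then follow from a leading-term argument: ordering diagrams by number of crosscap crossings and lexicographically within each level, each $v$ has leading diagram equal to its underlying one-crosscap connection, so the transition matrix to these connections is unitriangular.

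The hard part is the middle step: carrying out the skein computation in the Klein bottle that verifies the lollipop-decorated vectors are genuinely annihilated by $\langle\ ,\ \rangle_{\mathit{Mb}}$ at $d=z$, and checking that no contributions from curves of type $x$, $y$, or $w$ survive the specialization. A useful consistency check, and possibly a shortcut, is that the principal submatrix of $G_n^{\mathit{Mb}}$ indexed by the $\binom{2n}{n}$ crosscap-free connections coincides with the type $B$ matrix $G_n^B$, since both record only trivial and $z$-parallel curves; its determinant $D_n^B$ already carries the factor $(d-z)^{\binom{2n}{n-1}}$ by Theorem \ref{Theorem:TypeB}, and the real content is to promote this from a principal minor to the full determinant via the kernel construction above.
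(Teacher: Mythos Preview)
The paper does not supply its own proof of this proposition; it is stated with a citation to \cite{BIMP2} and no argument appears here, so there is no in-paper proof to compare against directly.

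On the merits of your proposal: the reduction from divisibility by $(d-z)^{\binom{2n}{n-1}}$ to a corank bound for $G_n^{\mathit{Mb}}\big|_{d=z}$ is standard and correctly stated, and indexing candidate kernel vectors by the $\binom{2n}{n-1}$ one-crosscap diagrams is the natural choice. The gap is exactly where you locate it, and it is not a detail but the whole proof. First, ``attaching $f_1$ to the single through-strand'' is vacuous as written, since $f_1$ is the identity on one strand; what you presumably intend is the lollipop insertion around the crosscap (in the paper's language, the $k=1$ case uses $\eta_{k-1}=\eta_0$, which collapses $z$-curves to $d$). Second, and more seriously, the paper explicitly warns in Section~\ref{GramMB1} that for type $\mathit{Mb}$ the lollipop technique ``is no longer as straightforward as it was in the case of type $B$'' and that ``attempts to using this technique have not yet been rigorously successful.'' The obstruction is precisely the one you flag: after gluing into the Klein bottle, the pairing $\langle v,m_j\rangle_{\mathit{Mb}}$ can produce curves of type $x$, $y$, $w$ in addition to $d$ and $z$, and there is no mechanism in your sketch forcing these contributions to carry a global $(d-z)$ factor. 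Your appeal to the type~$B$ formula for the crosscap-free principal minor is correct but, as you concede, a minor carrying the factor does not force the full determinant to carry it. In short, the outline is reasonable as a heuristic, but the central skein computation in $\mathit{Kb}\ \hat\times\ I$ is neither performed nor reducible to known results by the route you indicate; the proof in \cite{BIMP2} must proceed by some other device.
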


\begin{proposition}\label{GRAMMB:Prop2}\cite{BIMP}
$D^{\mathit{Mb}}_n$  is divisible by $(w(d+z)-2xy)^{\binom{2n}{n-1}}$.
\end{proposition}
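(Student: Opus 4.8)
The plan is to deduce the divisibility from a nullity computation over a suitable localization, rather than by directly expanding the determinant. Write $p \coloneqq w(d+z)-2xy$. Since $R=\mathbb{Z}[A^{\pm1},w,x,y,z]$ is a UFD and $p$ is irreducible in $R$ (viewed as a degree-one primitive polynomial in $w$ with coefficients $z-A^2-A^{-2}$ and $-2xy$, which are coprime), the localization $R_{(p)}$ is a discrete valuation ring with uniformizer $p$. Putting $G_n^{\mathit{Mb}}$ into Smith normal form over $R_{(p)}$ shows that the $p$-adic valuation of $\det G_n^{\mathit{Mb}}=D_n^{\mathit{Mb}}$ is bounded below by the nullity of the reduction $G_n^{\mathit{Mb}} \bmod p$. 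Hence it suffices to exhibit $\binom{2n}{n-1}$ vectors, linearly independent modulo $p$, lying in the kernel of $G_n^{\mathit{Mb}} \bmod p$.

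To build these vectors I would single out the basis diagrams of $\mathit{Mb}_n$ that intersect the crosscap exactly once; by Corollary~\ref{Corollary:basiselements}(d), taken with $n-k=1$, there are precisely $\binom{2n}{n-1}$ of them. For each such diagram I would form a fixed linear combination with companion diagrams obtained by rerouting the single crosscap strand (around versus through the crosscap), arranged so that, after gluing against an arbitrary element of $\mathit{Mb}_n$ and reducing the resulting disjoint circles in the Klein bottle of Figure~\ref{Klein}, every matrix entry becomes a multiple of $p$. The geometric content is that a lone crosscap strand, once doubled under the identification of the two M\"obius bands, closes up to curve families whose evaluations contribute $w(d+z)$ in the aligned pairing and $xy$ in the crossed pairing; the designed combination therefore vanishes exactly along $w(d+z)=2xy$.

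Because each combination is built from a distinct single-crosscap diagram plus strictly lower companions, the resulting family is unitriangular with respect to the filtration of $\mathit{Mb}_n$ by crosscap-intersection number, so the vectors remain linearly independent modulo $p$. This forces the nullity of $G_n^{\mathit{Mb}} \bmod p$ to be at least $\binom{2n}{n-1}$, and the valuation estimate then yields $p^{\binom{2n}{n-1}} \mid D_n^{\mathit{Mb}}$, as claimed. I note that this factor is precisely the $k=1$ term of the odd product in Conjecture~\ref{Qi}, so the argument doubles as a consistency check on that case, exactly parallel to how Proposition~\ref{GRAMMB:Prop1} captures the $k=1$ term $(d-z)$ of the first product.

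The main obstacle is the curve bookkeeping in the non-orientable setting: since the gluing produces a Klein bottle rather than a torus, a strand passing once through the crosscap can reconnect into several of the five homotopy types $d,x,y,z,w$, and one must verify that the chosen combinations collapse to a \emph{multiple of $w(d+z)-2xy$ and nothing else}. The delicate requirement is that each combination pair to zero modulo $p$ against \emph{every} basis element — including the zero-crosscap and multi-crosscap diagrams — not merely against diagrams in its own sector; controlling these cross-sector pairings, together with the parity effects intrinsic to the crosscap, is where the computation is most error-prone.
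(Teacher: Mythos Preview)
The paper does not actually prove this proposition: it is stated with a citation to \cite{BIMP} and no argument is given here. There is therefore no ``paper's own proof'' to compare against; the most one can infer from the present paper is that the same argument from \cite{BIMP} also yields the analogous statement for type $(\mathit{Mb})_1$ (Proposition~\ref{GRAMMB1:Prop2}).

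On the merits of your proposal itself: the reduction framework is correct. Since $R$ is a UFD and $p=w(d+z)-2xy$ is irreducible, the $p$-adic valuation of $\det G_n^{\mathit{Mb}}$ is at least the nullity of $G_n^{\mathit{Mb}}$ over the residue field $R_{(p)}/pR_{(p)}$, so it does suffice to produce $\binom{2n}{n-1}$ independent null vectors modulo $p$. The count of single-crosscap diagrams is also correct. However, your proposal is at present a plan rather than a proof: the actual linear combinations are never written down, and you yourself flag that verifying the vanishing of the pairing against \emph{all} of $\mathit{Mb}_n$ --- in particular against diagrams with two or more crosscap strands --- is the crux. Your heuristic (``$w(d+z)$ in the aligned pairing and $xy$ in the crossed pairing'') is suggestive but does not by itself force the vanishing in the multi-crosscap sector, where the closed-up curves on the Klein bottle can produce $w$, $x$, $y$, $z$, $d$ in combinations that are not visibly multiples of $p$. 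The unitriangularity claim is fine once the combinations exist, but as written there is a genuine gap: the combinations and the case analysis that makes them annihilate every row are missing, and that is exactly the content of the result.
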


\begin{proposition}\label{GRAMB}\cite{PBIMW}
$D_n^{\mathit{Mb}}$ is divisible by $((d^2-2-z)(w^2-2)-2(2-z))^{\binom{2n}{n-2}}$.
\end{proposition}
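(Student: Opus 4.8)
The plan is to reproduce, at $k=2$, the mechanism that yields the $k=1$ divisibilities of Propositions \ref{GRAMMB:Prop1} and \ref{GRAMMB:Prop2} and that underlies the type-$B$ formula of Theorem \ref{Theorem:TypeB}. The relevant sector is that of crossingless connections with exactly two arcs running through the crosscap; by Corollary \ref{Corollary:basiselements}(d) there are $\binom{2n}{n-2}$ such connections, which is exactly the target exponent. First I would apply the lollipop method to decorate the two crosscap-crossing arcs with the second Jones--Wenzl idempotent $f_2$, producing a linear map $\Phi_2$ on $\mathcal{S}_{2,\infty}(Mb\,\hat\times\,I,\{x_i\}_1^{2n})$ whose through-strand analysis isolates the $\binom{2n}{n-2}$-dimensional sector of two-through-strand diagrams.

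The heart of the argument is the evaluation of the bilinear form after this decoration. Gluing two $f_2$-decorated diagrams along their boundaries to form $Kb\,\hat\times\,I$ closes the decorated strands into $f_2$-colored simple closed curves in the Klein bottle; resolving these into the five basic curves $d,x,y,z,w$ of Figure \ref{Klein} by the Jones--Wenzl recursion, I would show that the resulting scalar is $(d^2-2-z)(w^2-2)-2(2-z)$. Here $d^2-2=T_2(d)$ together with the $z$ records the two essential directions carrying the $f_2$-color, the factor $w^2-2=T_2(w)$ records the crosscap-wrapping curve, and the correction $-2(2-z)$ records the nontrivial reconnection of the two colored strands through the crosscap region. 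This is precisely the even-$k$ factor of Chen's Conjecture \ref{Qi} at $k=2$, the even-branch counterpart of the $k=1$ odd factor $w(d+z)-2xy$ obtained in Proposition \ref{GRAMMB:Prop2}.

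With this scalar in hand, the divisibility follows by the same determinant-factorization used for type $B$ and for the $k=1$ cases: relating $D_n^{Mb}$ to the lollipop map $\Phi_2$, which is a change of decoration defined over the fraction field of $\mathbb{Z}[A^{\pm1},w,x,y,z]$ (since $f_2=1-\tfrac{1}{d}e_1$ carries a denominator $d$), expresses the $k=2$ contribution as the above scalar raised to the sector dimension $\binom{2n}{n-2}$. Because $D_n^{Mb}$ is itself a genuine polynomial, clearing the fraction-field denominators introduced by the idempotents leaves $\bigl((d^2-2-z)(w^2-2)-2(2-z)\bigr)^{\binom{2n}{n-2}}$ as a divisor of $D_n^{Mb}$.

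The main obstacle is the Klein-bottle colored evaluation described above, and in particular pinning down the correction term $-2(2-z)$ with its exact coefficient and sign. This requires tracking how the doubled, $f_2$-decorated strand leaving one crosscap meets and links the strand from the other band after the boundary identification---an orientation-sensitive computation on a nonorientable surface---and then deciding, by the parity of the number of crosscap crossings, that $k=2$ lands in the even branch (yielding $-2(2-z)$) rather than the odd branch (yielding $-2xy$, as at $k=1$ in Proposition \ref{GRAMMB:Prop2}). Establishing that $k=2$ sits in the even branch and extracting the precise scalar there is the delicate skein-theoretic core of the proof.
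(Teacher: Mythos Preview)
The paper does not contain a proof of this proposition. It is stated in Section~\ref{GramKnot} as a result quoted from \cite{PBIMW}, alongside Propositions~\ref{GRAMMB:Prop1} and~\ref{GRAMMB:Prop2}, with no argument given here; all three propositions serve only as background evidence for Conjecture~\ref{Qi}. Consequently there is no ``paper's own proof'' against which your proposal can be compared.

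As to the proposal itself: what you have written is a plan, not a proof, and you say so yourself when you identify the Klein-bottle colored evaluation (and in particular the precise correction term $-2(2-z)$) as ``the main obstacle'' that remains to be carried out. The outline---isolate the $\binom{2n}{n-2}$ sector of diagrams with two arcs through the crosscap, decorate with $f_2$, evaluate the resulting closed curves in $Kb\,\hat\times\,I$, and read off the scalar---is a reasonable heuristic consistent with how the $k=1$ factors in Propositions~\ref{GRAMMB:Prop1} and~\ref{GRAMMB:Prop2} were obtained in \cite{BIMP,BIMP2}. But note the paper's own caveat in Section~\ref{FutureDirections}: applying the lollipop method directly to type~$Mb$ (as opposed to the restricted type~$(Mb)_1$ treated in Theorem~\ref{MainTheorem}) is described as ``not so straightforward,'' precisely because once more than one arc crosses the crosscap the bijection with the annular model no longer preserves the bilinear form (see the Remark following Lemma~\ref{lemma:bijection}). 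Your sketch does not address this obstruction, and it is exactly the place where a naive transplantation of the $(Mb)_1$ argument to $Mb$ breaks down.
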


\section{The Gram determinant of type $\boldsymbol{(Mb)_1}$}\label{GramMB1}
The basis of the relative Kauffman bracket skein module of the twisted $I$-bundle of the M\"obius band, as described in Corollary \ref{Corollary:basiselements}, is infinite. If we restrict to only basis elements with no $z$ and $x$ curves, then we obtain a finite sub-collection of this basis. The Gram matrix of type $\mathit{Mb}$ was created by this finite sub-collection. However, this construction carries a few disadvantages. First, the number of elements increases exponentially as $n$ increases. In particular, computing the determinant for $n \geq 5$ has not been achieved due to the size of the matrices. Second, the lollipop technique of decorating the crosscap with the Jones-Wenzl idempotent is no longer as straightforward as it was in the case of type $B$. In fact, attempts to using this technique have not yet been rigorously successful. In this section we propose a new Gram determinant from a sub-collection of $Mb_n$, give direct connections to a new type $B$, and explain how the lollipop method can be applied to this case and where it falls short for the case of type $\mathit{Mb}$.

\begin{definition}\label{TypeMB1} 
Let $(\mathit{Mb}_{n})_1 = \mathit{Mb}_{n,0} \cup \mathit{Mb}_{n,1}$ where $\mathit{Mb}_{n,0} = \{ m_1, \dots, m_{\binom{2n}{n}}\}$ is the set of all diagrams of crossingless connections between $2n$ marked points on the boundary of $\mathit{Mb} \ \hat{\times} \ \{0\}$ whose arcs do not intersect the crosscap and $\mathit{Mb}_{n,1} = \{ m_1, \dots, m_{\binom{2n}{n-1}}\}$ is the set of all diagrams of crossingless connections between $2n$ marked points on the boundary of  $\mathit{Mb} \ \hat{\times} \ \{0\}$ with exactly one curve intersecting the crosscap. 
Define a bilinear form $\langle \ , \ \rangle_{\mathit{Mb}}$ on the elements of $(\mathit{Mb}_{n})_1$ by using the same bilinear form as type $\mathit{Mb}$, as follows: $$ \langle \ , \ \rangle_{\mathit{Mb}} : \mathcal{S}_{2,\infty}(\mathit{Mb}\  \hat \times \ I, \{x_i\}_1^{2n}) \times \mathcal{S}_{2,\infty}(\mathit{Mb}\ \hat \times \  I, \{x_i\}_1^{2n}) \longrightarrow \mathbb{Z}[d,w,x,y,z].$$

Given $m_i, m_j \in (\mathit{Mb}_{n})_1$, identify the boundary component of $m_i$ with that of the inversion of $m_j$, respecting the labels of the marked points. The result is an element in $Kb \ \hat{\times} \ I$ containing only disjoint simple closed curves. Then $\langle m_i , m_j\rangle_{\mathit{Mb}} :=  d^mx^ny^kz^lw^h$ where $m,n,k,l$ and $h$ denote the number of these curves, respectively.

The Gram matrix of type $(\mathit{Mb})_1$ is defined as $\ G_n^{(\mathit{Mb})_1} = (\langle m_i , m_j\rangle_{\mathit{Mb}})_{1 \leq i, j \leq \binom{2n}{n-1} +\binom{2n}{n}}$ and its determinant $D_n^{(\mathit{Mb})_1}$ is called the Gram determinant of type $(\mathit{Mb})_1$.
\end{definition}

In Example \ref{exampleMB1} we show the smallest Gram matrix of type $(Mb)_1$ that differs from type $Mb$.
\begin{example}\label{exampleMB1} 
For $n=2$, the set $(Mb_n)_1$ is given by
$$\left\{ \vcenter{\hbox{\begin{overpic}[scale = .15]{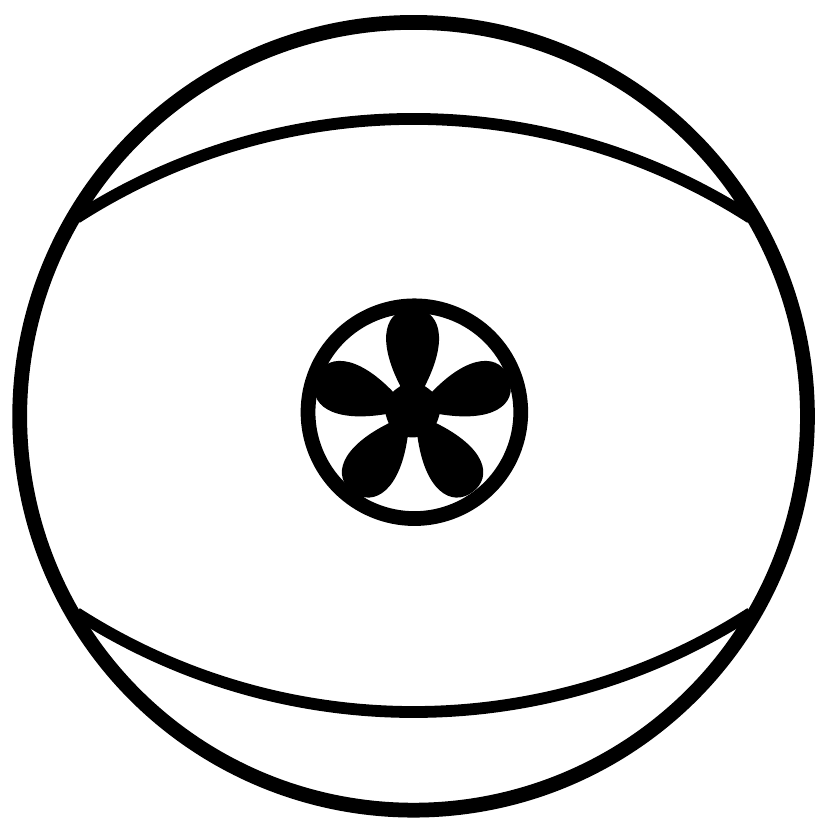}
\end{overpic} }}, \vcenter{\hbox{\begin{overpic}[scale = .15]{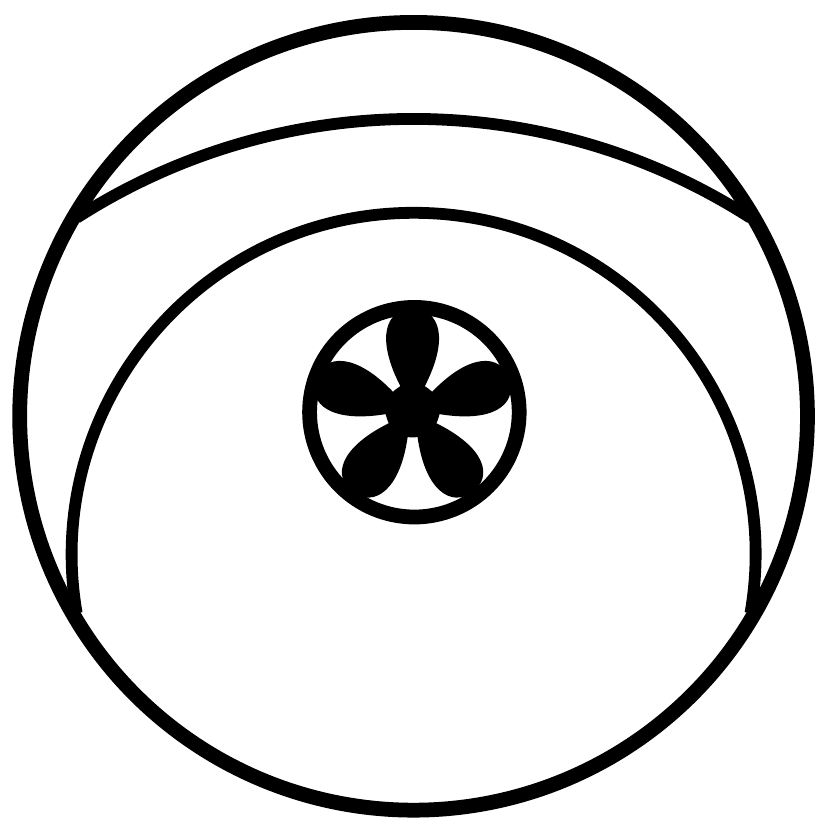}
\end{overpic} }}, \vcenter{\hbox{\begin{overpic}[scale = .15]{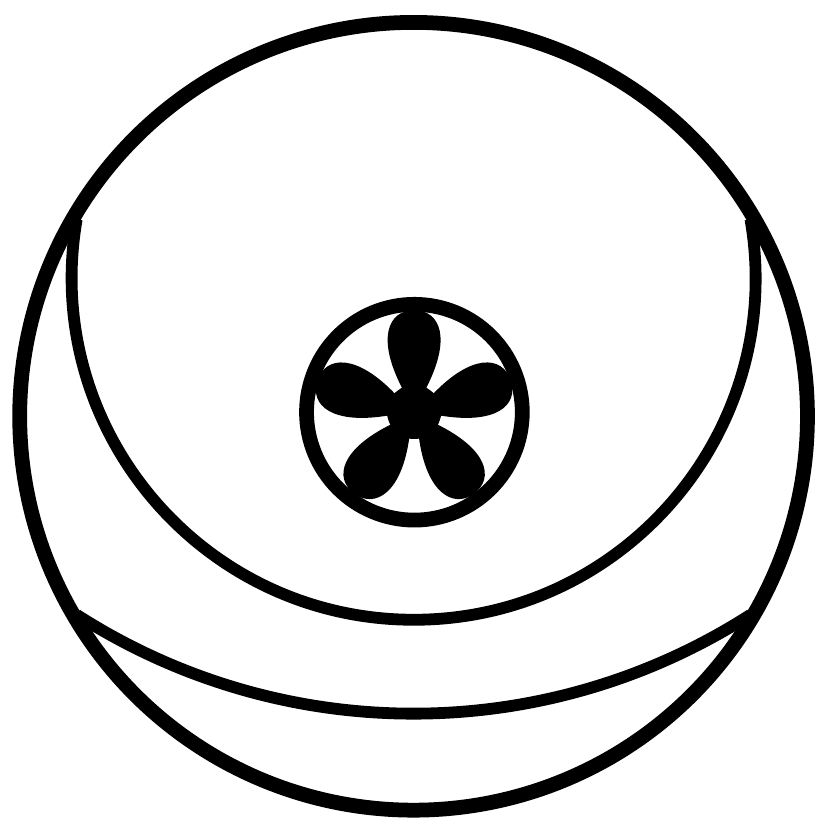}
\end{overpic} }}, \vcenter{\hbox{\begin{overpic}[scale = .15]{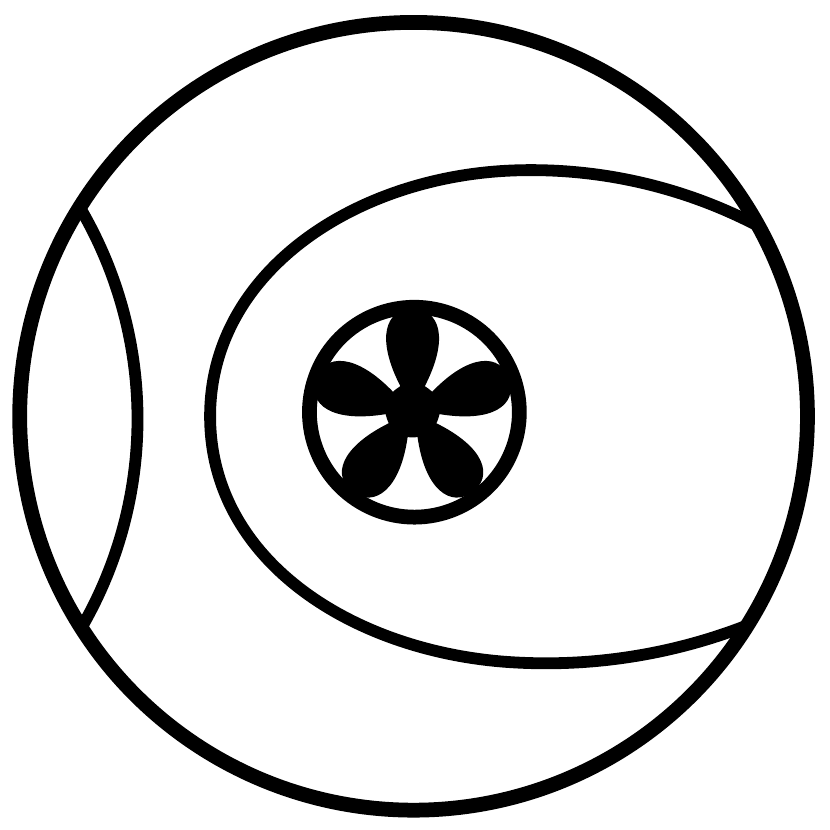}
\end{overpic} }}, \vcenter{\hbox{\begin{overpic}[scale = .15]{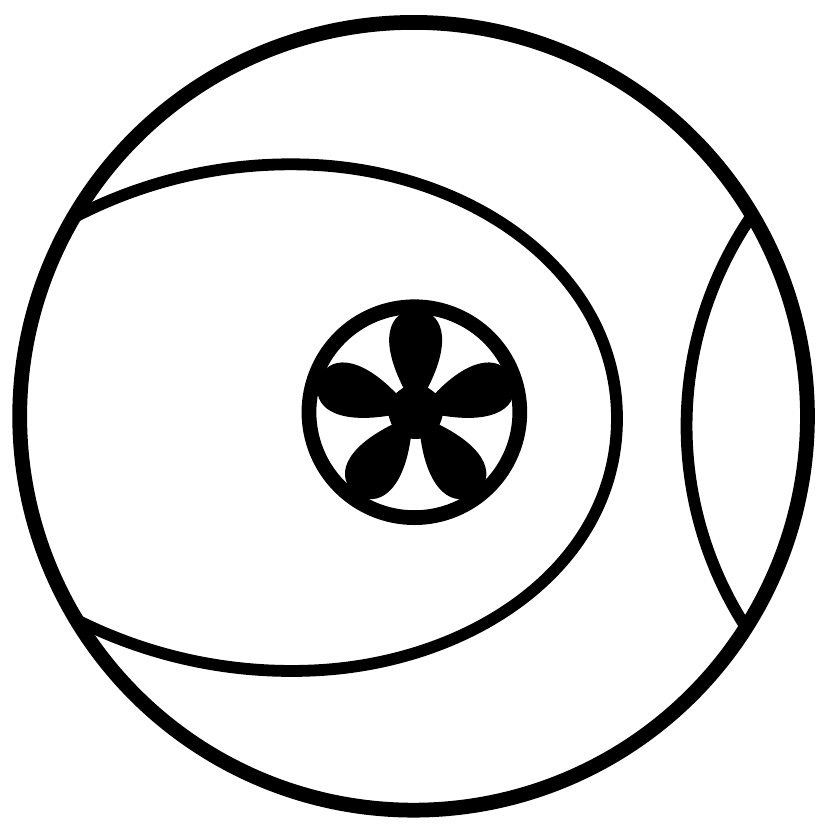}
\end{overpic} }}, \vcenter{\hbox{\begin{overpic}[scale = .15]{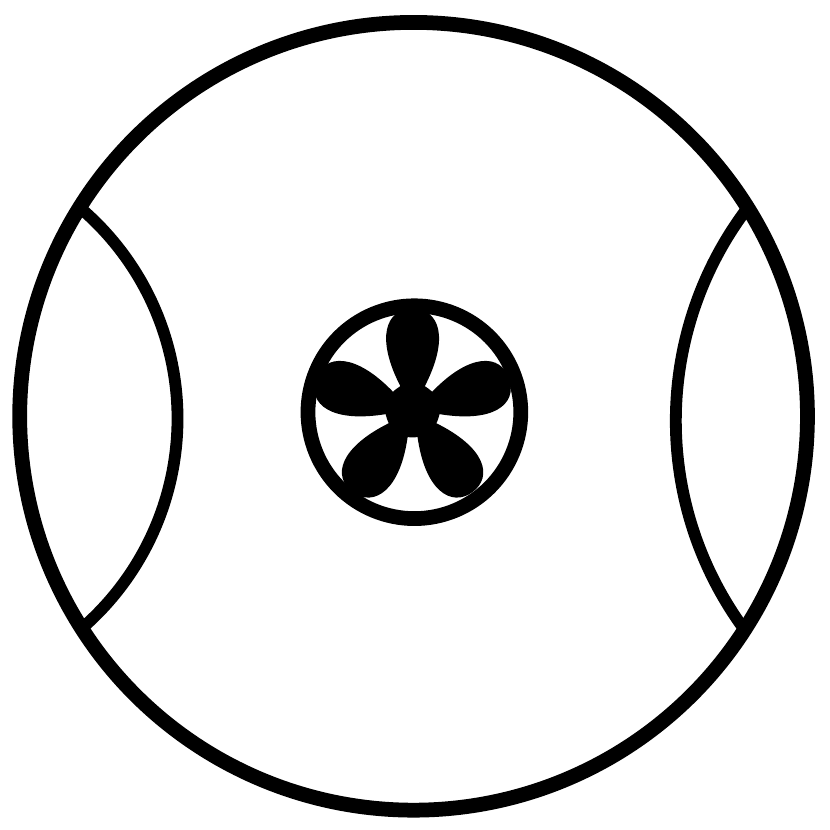}
\end{overpic} }}, \vcenter{\hbox{\begin{overpic}[scale = .15]{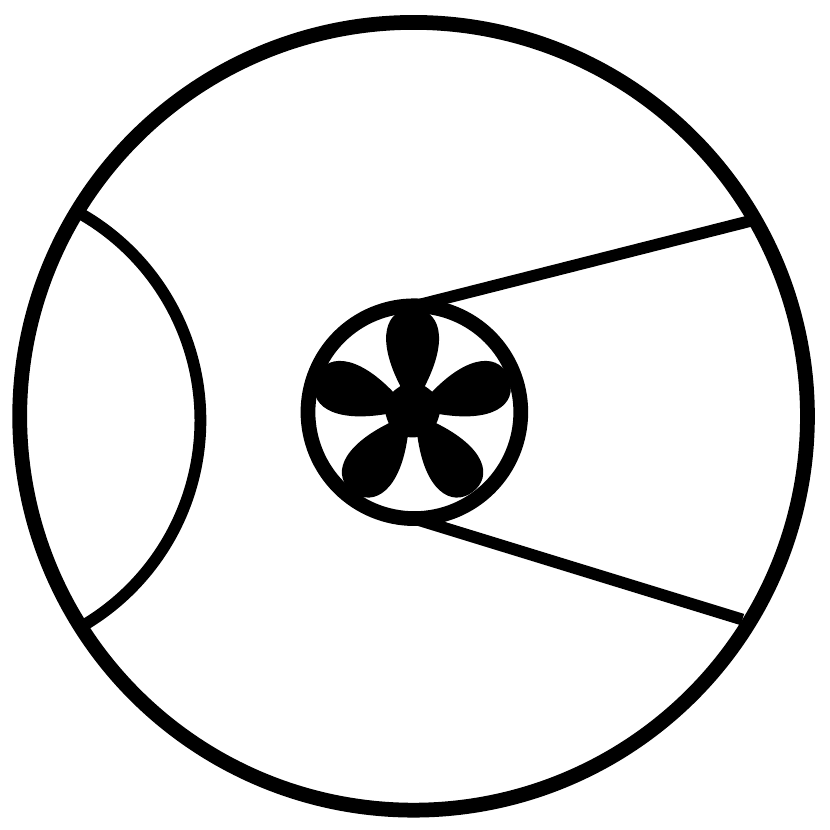}
\end{overpic} }}, \vcenter{\hbox{\begin{overpic}[scale = .15]{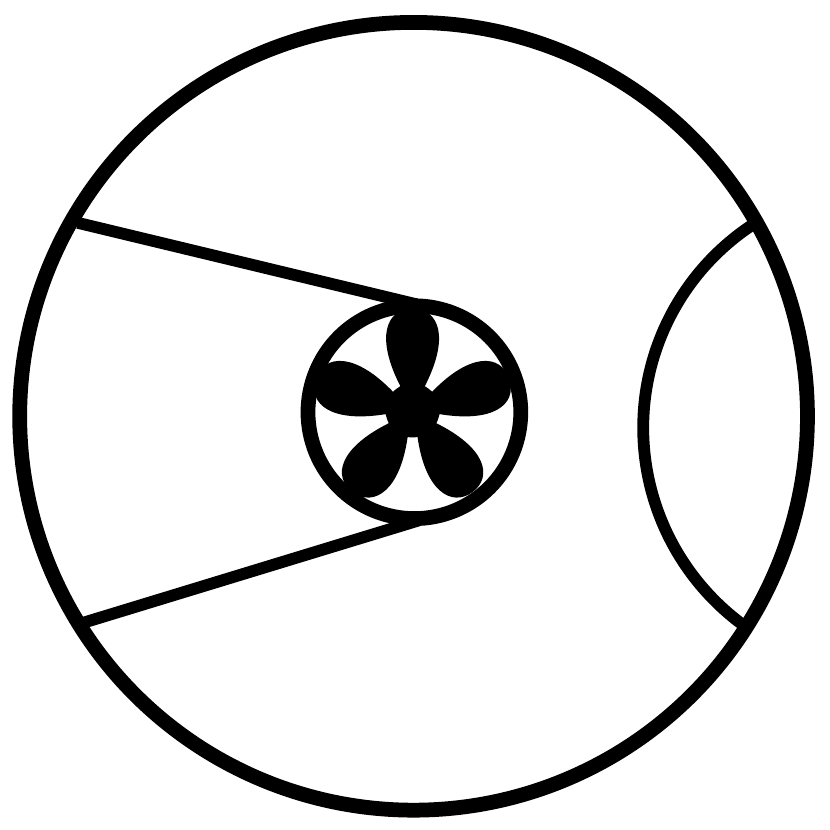}
\end{overpic} }}, \vcenter{\hbox{\begin{overpic}[scale = .15]{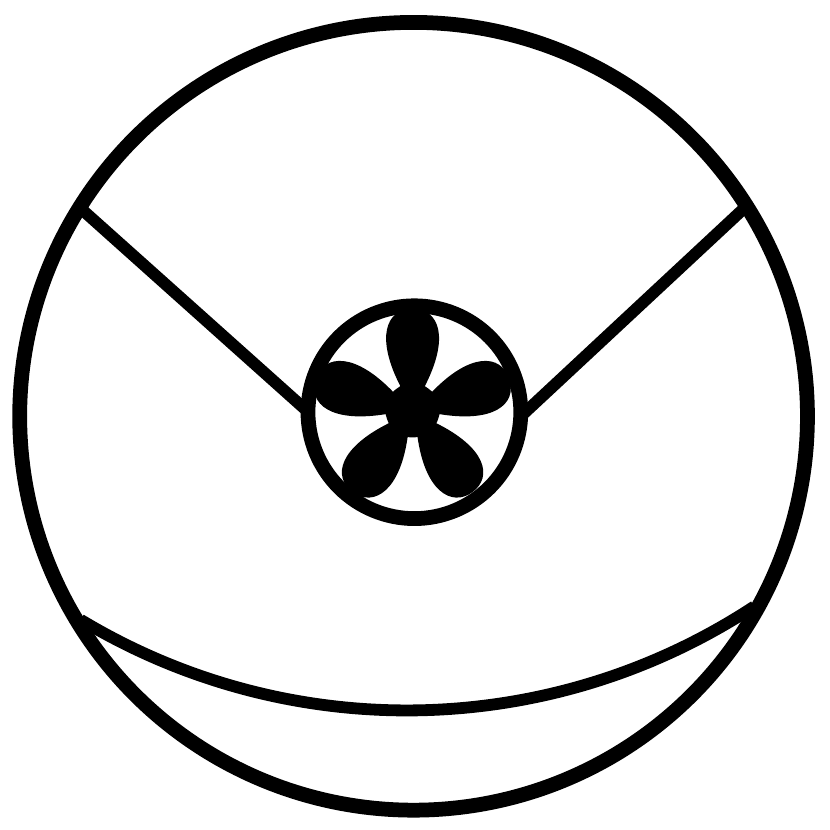}
\end{overpic} }}, \vcenter{\hbox{\begin{overpic}[scale = .15]{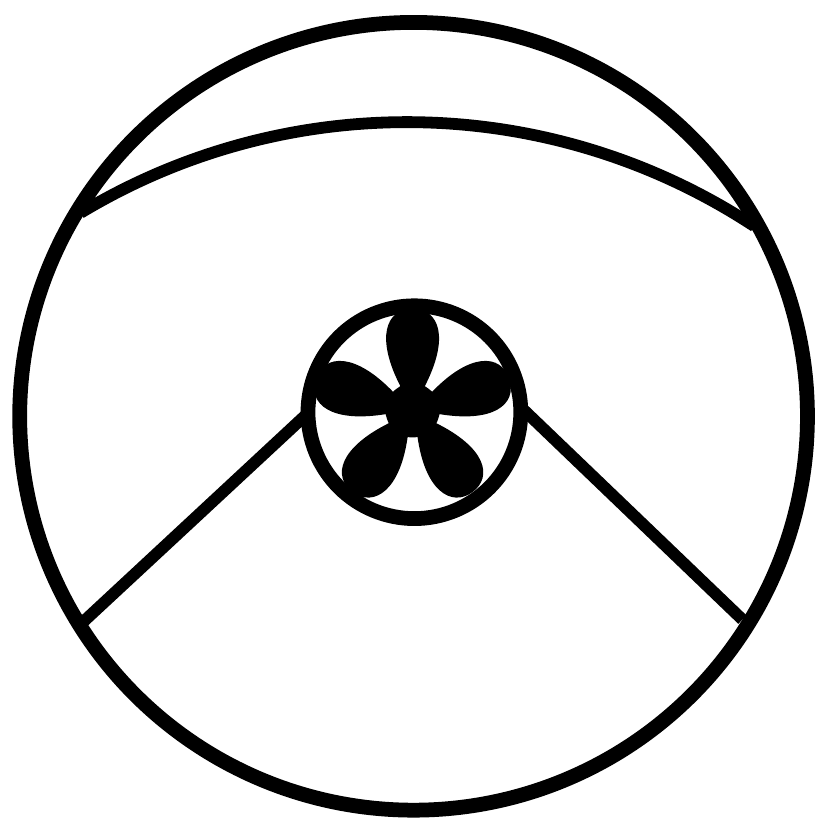}
\end{overpic} }} \right\}, $$
and the Gram matrix is shown in Table \ref{Grammatrixn2}. \\ 

{\centering
\resizebox{\columnwidth}{!}{%
\begin{tabular}{ c|||c|c|c|c|c|c||c|c|c|c}
 $\langle \ , \ \rangle$ & $\vcenter{\hbox{\includegraphics[scale = .13, height = 0.7cm]{GMB2_2.pdf}}}$ 
& $\vcenter{\hbox{\includegraphics[scale = .13,height = 0.7cm]{GMB2_1.pdf}}}$
& $\vcenter{\hbox{\includegraphics[scale = .13,height = 0.7cm]{GMB2_3.pdf}}}$
& $\vcenter{\hbox{\includegraphics[scale = .13,height = 0.7cm]{GMB2_4.pdf}}}$
& $\vcenter{\hbox{\includegraphics[scale = .13,height = 0.7cm]{GMB2_6.pdf}}}$
& $\vcenter{\hbox{\includegraphics[scale = .13,height = 0.7cm]{GMB2_5.pdf}}}$
& $\vcenter{\hbox{\includegraphics[scale = .13,height = 0.7cm]{GMB2_10.pdf}}}$
& $\vcenter{\hbox{\includegraphics[scale = .13,height = 0.7cm]{GMB2_8.pdf}}}$
& $\vcenter{\hbox{\includegraphics[scale = .13,height = 0.7cm]{GMB2_9.pdf}}}$
& $\vcenter{\hbox{\includegraphics[scale = .13,height = 0.7cm]{GMB2_7.pdf}}}$ \\
\hline \hline \hline 	
$\vcenter{\hbox{\includegraphics[scale = .13, height = 0.7cm]{GMB2_2.pdf}}}$        
& $ d^2$ & $ dz$ & $z^2$ & $z$ & $d$ & $z$ & $dy$ & $y$ & $yz$ & $ y$  \\ \hline 
$\vcenter{\hbox{\includegraphics[scale = .13, height = 0.7cm]{GMB2_1.pdf}}}$          
& $ dz$ & $ d^2$ & $dz$ & $d$ & $z$ & $d$ & $dy$ & $y$ & $dy$ & $ y$  \\ \hline 
$\vcenter{\hbox{\includegraphics[scale = .13, height = 0.7cm]{GMB2_3.pdf}}}$ 
& $ z^2$ & $ dz$ & $d^2$ & $z$ & $d$ & $z$ & $yz$ & $y$ & $dy$ & $ y$  \\ \hline 
$\vcenter{\hbox{\includegraphics[scale = .13, height = 0.7cm]{GMB2_4.pdf}}}$  
& $ z$ & $ d$ & $z$ & $d^2$ & $dz$ & $z^2$ & $y$ & $yz$ & $y$ & $ dy$ \\ \hline 
$\vcenter{\hbox{\includegraphics[scale = .13, height = 0.7cm]{GMB2_6.pdf}}}$  
& $ d$ & $ z$ & $d$ & $dz$ & $d^2$ & $dz$ & $y$ & $dy$ & $y$ & $ dy$  \\ \hline 
$\vcenter{\hbox{\includegraphics[scale = .13, height = 0.7cm]{GMB2_5.pdf}}}$  
& $ z$ & $ d$ & $z$ & $z^2$ & $dz$ & $d^2$ & $y$ & $dy$ & $y$ & $ yz$  \\ \hline \hline
$\vcenter{\hbox{\includegraphics[scale = .13, height = 0.7cm]{GMB2_10.pdf}}}$  
& $ dx$ & $ dx$ & $xz$ & $x$ & $x$ & $x$ & $dw$ & $w$ & $xy$ & $w$ \\ \hline 
$\vcenter{\hbox{\includegraphics[scale = .13, height = 0.7cm]{GMB2_8.pdf}}}$  
& $x$ & $ x$ & $x$ & $xz$ & $dx$ & $dx$ & $w$ & $dw$ & $w$ & $ xy$  \\ \hline 
$\vcenter{\hbox{\includegraphics[scale = .13, height = 0.7cm]{GMB2_9.pdf}}}$  
& $ xz$ & $ dx$ & $dx$ & $x$ & $x$ & $x$ & $xy$ & $w$ & $dw$ & $ w$  \\ \hline 
$\vcenter{\hbox{\includegraphics[scale = .13, height = 0.7cm]{GMB2_7.pdf}}}$  
& $ x$ & $ x$ & $x$ & $dx$ & $dx$ & $xz$ & $w$ & $xy$ & $w$ & $ dw$ 
\end{tabular}}}
\captionof{table}{The Gram matrix $G_{2}^{(Mb)_1} .$}\label{Grammatrixn2}\label{table}

\begin{eqnarray*}
D_2^{(\mathit{Mb})_1} &=& (-2 + d) d^2 (2 + d) (d - z)^4 (-2 + d^2 - z) (-2 + d^2 + z) (-d w + 
   2 x y - w z)^4 \\
   &=& (d-z)^4((d^2-2)+z) ((d+z)w-2xy)^4((d^2-2)-z)(d^2(d^2-4)+2-2) \\
   &=& (T_1(d)-z)^4(T_2(d)^2-z^2)((d+z)w-2xy)^4(T_4(d)-2).
\end{eqnarray*}
\end{example}

The following proposition is a direct result from a proof given in \cite{BIMP} for type $Mb$.

\begin{proposition}\cite{BIMP}\label{GRAMMB1:Prop2}
$D^{(\mathit{Mb})_1}_n$  is divisible by $(w(d+z)-2xy)^{\binom{2n}{n-1}}$.
\end{proposition}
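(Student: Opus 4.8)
\section*{Proof proposal}

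The plan is to exploit the fact that $G_n^{(\mathit{Mb})_1}$ is literally the principal submatrix of the full type $\mathit{Mb}$ Gram matrix $G_n^{\mathit{Mb}}$ obtained by restricting to the rows and columns indexed by $(\mathit{Mb}_n)_1 = \mathit{Mb}_{n,0}\cup \mathit{Mb}_{n,1}$; indeed, the bilinear form in Definition \ref{TypeMB1} is by construction the same form $\langle\ ,\ \rangle_{\mathit{Mb}}$ used in Definition \ref{TypeMB}, so no entries change under the restriction. Write $P = w(d+z)-2xy$, which is exactly the $k=1$ factor appearing in Chen's Conjecture \ref{Qi}; geometrically it records the relation among the Klein bottle curves forced by a single arc threading the crosscap. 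I would recover the divisibility by pulling back to the already-established factorization of Proposition \ref{GRAMMB:Prop2}.

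Concretely, the proof of Proposition \ref{GRAMMB:Prop2} in \cite{BIMP} produces $\binom{2n}{n-1}$ vectors $v_1,\dots,v_{\binom{2n}{n-1}}$, linearly independent modulo $P$, satisfying $G_n^{\mathit{Mb}}\, v_s \equiv 0 \pmod{P}$ for every $s$. The key point I would verify is that each such $v_s$ is supported entirely on the coordinates belonging to $(\mathit{Mb}_n)_1$, i.e. on diagrams with at most one arc through the crosscap. This is what one expects, since $P$ is the $k=1$ contribution and the relations generating it only compare $\mathit{Mb}_{n,1}$ diagrams (one crosscap intersection) against $\mathit{Mb}_{n,0}$ diagrams (none), with no diagram of higher crosscap multiplicity entering; note also that the target exponent $\binom{2n}{n-1}$ matches $\lvert \mathit{Mb}_{n,1}\rvert$, reinforcing this picture. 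Granting the support claim, the restriction $v_s'$ of $v_s$ to the $(\mathit{Mb}_n)_1$ coordinates satisfies $G_n^{(\mathit{Mb})_1}\, v_s' \equiv 0 \pmod P$, because for an index $i\in(\mathit{Mb}_n)_1$ the $i$-th entry of $G_n^{\mathit{Mb}}\,v_s$ equals $\sum_{j\in(\mathit{Mb}_n)_1}\langle m_i,m_j\rangle_{\mathit{Mb}}\,(v_s)_j = \big(G_n^{(\mathit{Mb})_1}\,v_s'\big)_i$ once $v_s$ vanishes off $(\mathit{Mb}_n)_1$.

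Having produced $\binom{2n}{n-1}$ vectors that are linearly independent modulo $P$ and annihilated by $G_n^{(\mathit{Mb})_1}$ modulo $P$, I would finish with the standard determinantal argument used throughout this circle of ideas: since $P$ is irreducible in $R = \mathbb{Z}[A^{\pm1},w,x,y,z]$, completing these vectors to an $R$-basis by adjoining suitable standard basis vectors yields a matrix $V$ with $\det V$ coprime to $P$ and with $G_n^{(\mathit{Mb})_1} V$ having its first $\binom{2n}{n-1}$ columns divisible by $P$. Hence $P^{\binom{2n}{n-1}}$ divides $\det\!\big(G_n^{(\mathit{Mb})_1}V\big)=D_n^{(\mathit{Mb})_1}\det V$, and coprimality of $\det V$ with $P$ gives $P^{\binom{2n}{n-1}} \mid D_n^{(\mathit{Mb})_1}$.

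The main obstacle is the support claim in the second paragraph: I must check that the explicit null vectors from \cite{BIMP} never involve basis diagrams with two or more arcs through the crosscap, so that they restrict cleanly to $(\mathit{Mb}_n)_1$. A secondary point is confirming that the reduction modulo $P$ does not collapse the count, i.e. that the $v_s'$ remain linearly independent over $\operatorname{Frac}(R/P)$, together with the irreducibility of $P$ needed to justify the power-of-a-prime divisibility step. Once the support and independence of the \cite{BIMP} vectors are transcribed to the restricted basis, the conclusion is immediate, which is precisely why this is a direct consequence of the type $\mathit{Mb}$ proof.
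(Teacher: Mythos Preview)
Your proposal is correct and matches the paper's own justification: the paper does not give a standalone proof but simply states that the proposition is ``a direct result from a proof given in \cite{BIMP} for type $Mb$,'' which is precisely the mechanism you describe---the null vectors constructed in \cite{BIMP} for Proposition~\ref{GRAMMB:Prop2} are supported on $\mathit{Mb}_{n,0}\cup\mathit{Mb}_{n,1}$, so they restrict to the principal submatrix $G_n^{(\mathit{Mb})_1}$ without modification. You have correctly identified the one point that must be checked (the support claim), and your determinantal finish is the standard one used in this literature.
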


Consider an annulus with $2n$ points on the outer boundary and $2$ points on the inner boundary. Fix a crossingless connection between $2n-2$ points on the outer boundary that do not isolate the remaining two points on the outer boundary from a path to the inner boundary points. Then there are an infinite number of ways to connect the remaining outer boundary points to the inner boundary points without introducing a crossing. For example, if we start by connecting two arcs, each on the outer boundary, then we may wrap the arcs around the annular boundary by $\pi k$ where $k \in \mathbb{Z}$ before connecting to the inner boundary points. An illustration is given in Figure \ref{Figure:AnnDehntwist}. 

\begin{figure}[ht]
    \centering
    \begin{subfigure}{.24\textwidth}
\centering
    $$\vcenter{\hbox{\begin{overpic}[scale = .5]{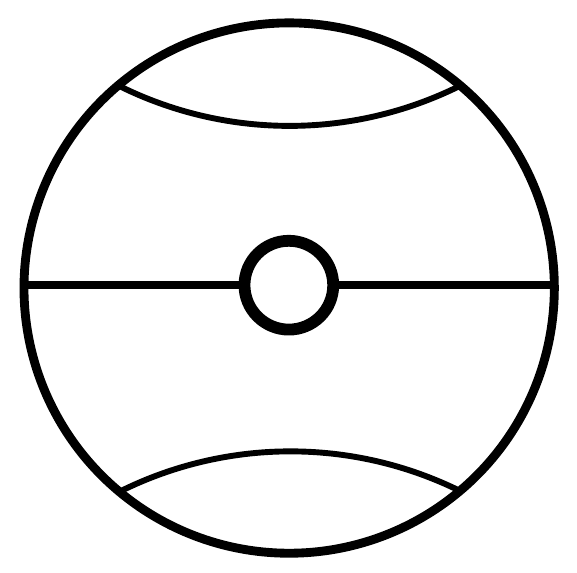}
\end{overpic} }}$$
    \caption{$k=0$.}
     \label{fig:whitemarkerexampleboundary1}
\end{subfigure}
\begin{subfigure}{.24\textwidth}
\centering
    $$\vcenter{\hbox{\begin{overpic}[scale = .5]{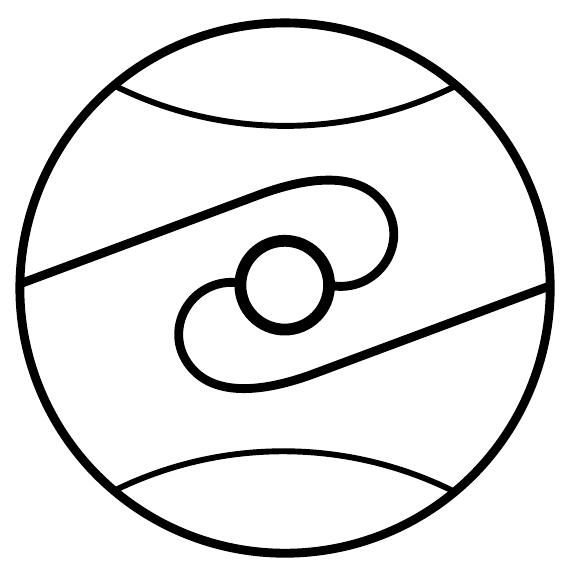}
\end{overpic} }}$$
    \caption{$k=-1$.}
     \label{fig:AnnK1}
\end{subfigure}
\begin{subfigure}{.24\textwidth}
\centering
    $$\vcenter{\hbox{\begin{overpic}[scale = .5]{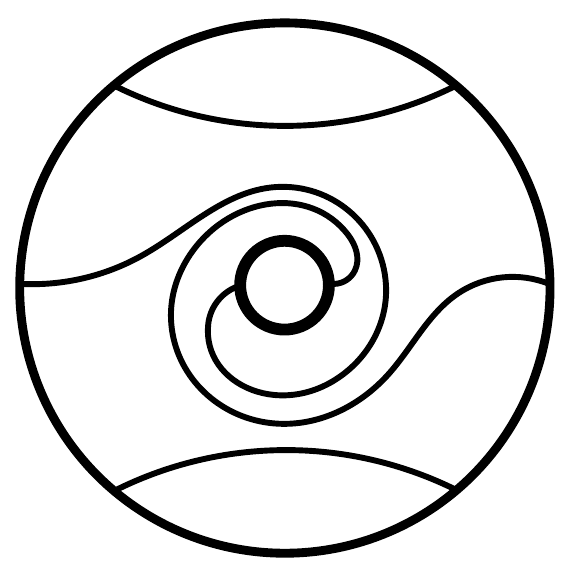}
\end{overpic} }}$$
    \caption{$k=-2$.}
     \label{fig:AnnK2}
\end{subfigure}
\begin{subfigure}{.24\textwidth}
\centering
    $$\vcenter{\hbox{\begin{overpic}[scale = .5]{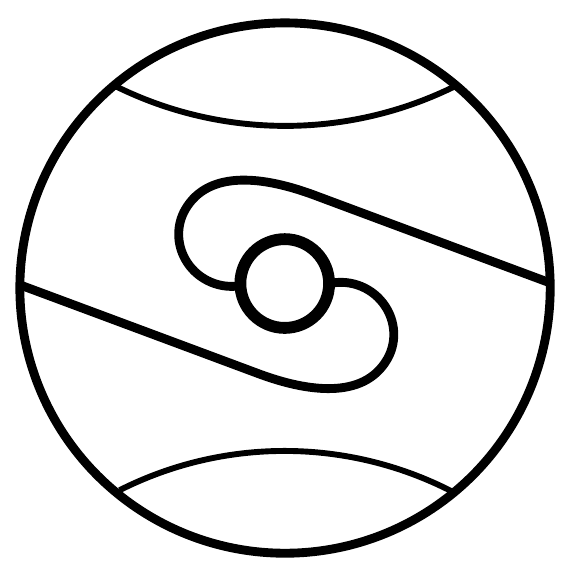}
\end{overpic} }}$$
    \caption{$k=1$.}
     \label{fig:AnnKn1}
\end{subfigure}
    \caption{Examples of distinct crossingless connections from an infinite family where two arcs attached to the inner and outer boundary wrap around the annular boundary $k/2$ times for $k \in \mathbb{Z}$.}
    \label{Figure:AnnDehntwist}
\end{figure}

Fix a line segment connected between the $1^{st}$ and $2n^{th}$ marked point of the outer boundary and between the two inner boundary points. Call this segment the \textbf{lollipop}. As before, consider a fixed crossingless connection between $2n-2$ points on the outer boundary that do not isolate the remaining four points. If the two arcs bounding the remaining four marked points are not allowed to intersect the lollipop, then there is only one way, up to isotopy, to connect the arcs from the outer boundary points to the inner boundary points, without introducing crossings. 

\begin{figure}[ht]
    \centering
    $$ \vcenter{\hbox{
\begin{overpic}[scale = .5]{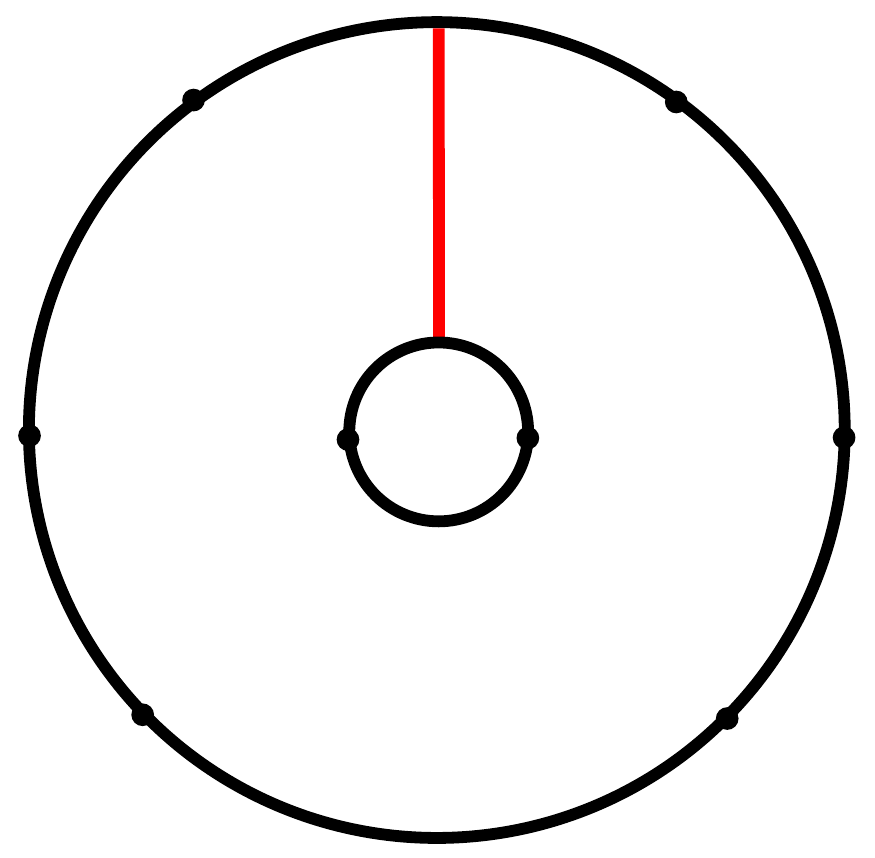}
\put(101,110){$x_1$}
\put(126,58){$x_2$}
\put(108,15){$x_3$}
\put(9,15){$x_4$}
\put(-10, 58){$x_5$}
\put(13,110){$x_6$}
\put(36, 58){$y_2$}
\put(80, 58){$y_1$}
\end{overpic} }}$$
\caption{An illustration of a lollipop in an annulus with $6$ marked points on the outer boundary and two marked points on the inner boundary.}
\label{fig:lollipop}
\end{figure}

The following lemma is a generalization of the children playing a game proof given in \cite{Prz1} and explained in \cite{PBIMW}. Also, one can find a detailed proof of the lemma in \cite{Iba}.
\begin{lemma}\cite{PBIMW, Prz1}
Consider an annulus with $2n$ marked points in the outer boundary, $2$ marked points in the inner boundary, and suppose it contains a lollipop. Let $B_{n,1} = \{ b'_1, \cdots, b'_N \}$ be the set of crossingless connections, up to isotopy, with the following properties:
\begin{enumerate}
    \item There are $n-1$ arcs connected to the marked points in the outer boundary.
    \item These $n-1$ arcs do not isolate the remaining two marked points in the outer boundary from the marked points in the inner boundary.
    \item The two remaining arcs are disjoint from the lollipop and
    \item each are connected to the inner and outer boundary components of the annulus, respectively.
\end{enumerate}
Then $N = \binom{2n}{n-1}$ and there is a one-to-one correspondence between $B_{n, 1}$ and $Mb_{n,1}$.
\end{lemma}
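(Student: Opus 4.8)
The plan is to reduce both assertions—the enumeration $N=\binom{2n}{n-1}$ and the bijection with $\mathit{Mb}_{n,1}$—to an analysis of the $n-1$ outer arcs alone, and then to run a lattice-path (``children's game'') encoding. First I would invoke the uniqueness statement established just before the lemma: conditions (3)--(4) force the two outer-to-inner arcs into a single isotopy class once the outer arcs are fixed. Hence an element of $B_{n,1}$ is completely determined by its system of $n-1$ crossingless outer arcs satisfying the non-isolation condition (2), and both the count and the correspondence reduce to understanding these systems.

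For the count I would generalize the children's game of \cite{Prz1, PBIMW}. Cut the outer boundary open at the lollipop to linearize the marked points as $x_1,\dots,x_{2n}$, and read them left to right, recording $U$ when a point opens an arc (or will become a through-strand) and $D$ when it closes a still-open arc. A diagram of $B_{n,1}$ produces a $\pm1$ sequence of length $2n$ with $n-1$ closings, hence total height $+2$, the two unmatched openings being the points that run to the inner boundary. The decisive feature is that, unlike the disk case, the path need \emph{not} stay non-negative: a $D$ that finds no open arc is instead joined, across the lollipop, to one of the surplus openings, producing a lollipop-crossing outer arc—permitted, since only the inner arcs are barred from the lollipop. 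Processing with a stack and then pairing the deferred closings with the outermost leftover openings across the cut always leaves exactly two openings in the middle as through-strands, and these two are automatically non-isolated. I would show this is a bijection onto all length-$2n$ words in $n+1$ symbols $U$ and $n-1$ symbols $D$, of which there are $\binom{2n}{n-1}$, giving $N=\binom{2n}{n-1}$.

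For the correspondence I would realize the M\"obius band as the annulus with the antipodal identification imposed on its inner boundary, so that the crosscap is the image of that inner circle and the lollipop is the trace of the arc cut to undo the identification. Cutting the crosscap then sends the unique crosscap-crossing arc of an element of $\mathit{Mb}_{n,1}$ to the two outer-to-inner arcs (its ends landing at the antipodal pair $y_1,y_2$) and sends the remaining $n-1$ arcs directly to the outer arcs; re-gluing antipodally is the inverse. One checks that crossinglessness is preserved in both directions, that ``the inner arcs avoid the lollipop'' matches ``the arc meets the crosscap exactly once'' (any winding would create additional crosscap intersections, leaving $\mathit{Mb}_{n,1}$), and that non-isolation corresponds to crossinglessness of the reassembled M\"obius diagram. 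Since $|\mathit{Mb}_{n,1}|=\binom{2n}{n-1}$ by Definition \ref{TypeMB1}, this independently re-confirms the count.

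The hard part will be the well-definedness of this last correspondence: pinning down that a single lollipop-avoiding inner connection corresponds to exactly one crosscap crossing, and that the surplus $D$-before-$U$ arcs—the ones crossing the lollipop—reassemble into genuine crossingless M\"obius arcs, since this is precisely where nonorientability meets the planar arc combinatorics. As a sanity check I would record $n=2$: the admissible single arcs are the four adjacent chords $x_1x_2,\,x_2x_3,\,x_3x_4,\,x_1x_4$ (the last crossing the lollipop), the two diagonals being excluded by non-isolation, matching $\binom{4}{1}=4$, the four words $UDUU,\,UUDU,\,UUUD,\,DUUU$, and the four elements of $\mathit{Mb}_{2,1}$.
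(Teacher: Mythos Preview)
Your proposal is sound and tracks the approach the paper indicates: the paper does not supply its own proof but explicitly says the lemma is a generalization of the ``children playing a game'' argument of \cite{Prz1, PBIMW} (with details in \cite{Iba}), and that is precisely the lattice-path encoding you carry out. Your reduction step---invoking the pre-lemma uniqueness to determine the through-strands from the outer arcs---and your realization of $\mathit{Mb}$ as the annulus with antipodal inner identification are the expected moves, so there is nothing substantively different to compare.
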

\begin{definition} Consider $\mathcal{S}_{2,\infty}(\mathit{Ann} \times I, \{x_i\}_1^{2n}) \oplus \mathcal{S}_{2,\infty}(\mathit{Ann} \times I, \{x_i\}_1^{2n} \cup \{ y^{inn}_1, y^{inn}_2\})$ and its submodule, $\mathcal{S}((B_n)_1) = \mathcal{S}(B_{n, 0}) \oplus \mathcal{S}(B_{n, 1})$, where the skein modules $\mathcal{S}(B_{n, 0})$ and $\mathcal{S}(B_{n, 1})$ are generated by $B_{n, 0}$ and $B_{n, 1}$, respectively. In particular, $(B_n)_1 = B_{n, 0} \sqcup B_{n, 1}$. Furthermore, let $\mathcal{S}((Mb_{n})_1)$ be a submodule of $\mathcal{S}_{2,\infty}(\mathit{Mb} \  \hat{\times} \ I, \{x_i\}_1^{2n})$ generated by the elements of $(Mb_{n})_1$. Define a linear map $\varphi: \mathcal{S}((Mb_{n})_1) \mapsto \mathcal{S}(B_{n, 1})$ on the basis as follows:

\begin{enumerate}
    \item If $m \in Mb_{n,0}$, then there exists a unique element in $B_{n, 0}$, say $b_{n, 0}$, obtained from $m$ by replacing the crosscap with a table. In this case $\varphi(m) = b_{n,0}\in B_{n,0}$.
    \item If $m \in Mb_{n,1}$, then there exists a unique element in $B_{n, 1}$, say $b_{n,1}$, obtained from matching the arcs whose boundary are disjoint from the inner boundary. In this case $\varphi(m)=b_{n,1} \in B_{n,1}$.
\end{enumerate}
\end{definition}

The next lemma gives a direct connection between type $(Mb)_1$ and a new Gram determinant of type $B$ constructed by the set $(B_n)_1$ and using the same bilinear form as type $B$. Type $(Mb_n)_1$ is a special case of it when the distinction of the two curves attached to the inner boundary (or outer boundary) of the annulus is ignored as shown in Figure \ref{fig:labellingAnnulus}.

\begin{lemma}\label{lemma:bijection}
The map $\varphi$ is a bijection between the bases $(Mb_{n})_1$ and $(B_n)_1$. Furthermore, the Gram determinant is preserved up to an appropriate labelling of the elements.
\end{lemma}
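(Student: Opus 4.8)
The plan is to prove the two assertions separately. First I would show that $\varphi$ is a bijection, and then that it transports the Gram matrix of type $(Mb)_1$ onto the Gram matrix of the new type $B$ built on $(B_n)_1$, so that the two determinants agree. The determinant statement reduces to the entrywise identity $\langle m_i, m_j\rangle_{Mb} = \langle \varphi(m_i), \varphi(m_j)\rangle_{B}$ (after identifying the curve classes of the two constructions): once this holds, the bijection $\varphi$ merely reindexes the rows and columns of the Gram matrix by one and the same permutation, and a simultaneous row-and-column permutation leaves the determinant unchanged. This is precisely the content of the phrase \emph{up to an appropriate labelling of the elements}.

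For the bijection, I would exploit that $\varphi$ is defined blockwise on the disjoint union $(Mb_n)_1 = Mb_{n,0}\sqcup Mb_{n,1}$, with $\varphi(Mb_{n,0})\subseteq B_{n,0}$ and $\varphi(Mb_{n,1})\subseteq B_{n,1}$, so it suffices to check that each block map is a bijection. On the first block, the operation of replacing the crosscap with a table is visibly invertible: its inverse reinstalls the crosscap in the complementary region, and since the arcs of an element of $Mb_{n,0}$ are by definition disjoint from the crosscap, both operations preserve crossinglessness and produce a unique diagram. As $|Mb_{n,0}| = \binom{2n}{n} = |B_{n,0}|$ by Corollary \ref{Corollary:basiselements}, this restriction is a bijection. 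On the second block, the matching of the two arcs disjoint from the inner boundary is exactly the correspondence established in the preceding lemma, which already asserts $|B_{n,1}| = \binom{2n}{n-1} = |Mb_{n,1}|$ together with a one-to-one correspondence between $Mb_{n,1}$ and $B_{n,1}$; I would verify that $\varphi$ realizes this correspondence. Because the images land in disjoint blocks and each block map is bijective, $\varphi$ is a bijection of $(Mb_n)_1$ onto $(B_n)_1$.

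For the determinant, the heart of the argument is the entrywise identity above, which I would establish by a case analysis on where $m_i$ and $m_j$ lie. When both lie in $Mb_{n,0}$, the arcs avoid the crosscap on both sheets, so identifying the two M\"obius bands along their boundary produces the same system of closed curves as gluing the two corresponding annular diagrams along the outer boundary; the null-homotopic and boundary-parallel classes match as $d$ and $z$, reproducing the type $B$ pairing on $B_{n,0}$. When both lie in $Mb_{n,1}$, each sheet contributes one arc through the crosscap, and I would track how the resulting closed curves in the Klein bottle of Figure \ref{Klein} correspond, under $\varphi$, to the curves produced in the doubled annulus by the two inner-boundary arcs, matching the classes $x,y,w$ (and $d$) with their annular counterparts. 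The mixed case $m_i\in Mb_{n,0}$, $m_j\in Mb_{n,1}$ is handled the same way and yields the remaining entries.

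The main obstacle is this last piece of topological bookkeeping in the $Mb_{n,1}$ configurations: one must verify that passing an arc through the crosscap and then doubling across the boundary yields exactly the same homotopy type of closed curve as the corresponding inner-boundary arc yields after doubling the annulus, and that the lollipop constraint defining $B_{n,1}$ pins down the wrapping so that no spurious nontrivial classes are created and the two constructions record their curves over the same ring $\mathbb{Z}[d,w,x,y,z]$. Once this identification of curve classes is confirmed in each configuration, the entrywise identity follows, $G_n^{(Mb)_1}$ and the new type $B$ Gram matrix coincide after relabeling the basis by $\varphi$, and equality of the two Gram determinants is immediate from the invariance of the determinant under simultaneous row and column permutation.
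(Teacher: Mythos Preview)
Your proposal is correct and follows essentially the same route as the paper: bijection by construction on the two blocks, then preservation of the bilinear form via a dictionary between Klein-bottle curve classes and the curve/arc classes arising in the doubled annulus. The paper's own proof is quite terse (``by construction'' for the bijection, then Figure~\ref{fig:labellingAnnulus} for the dictionary), so your added detail is welcome.

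One small point of interpretation: you take the phrase ``up to an appropriate labelling of the elements'' to mean the reindexing of basis elements under $\varphi$. In the paper the phrase refers instead to the labelling of the \emph{curve classes} in the annular picture --- the assignment of the symbols $w,x,y$ to the arcs that remain attached to the inner/outer boundary after gluing (Figure~\ref{fig:labellingAnnulus}). This labelling is what extends the type~$B$ bilinear form from $B_{n,0}$ to all of $(B_n)_1$; without it, $\langle\varphi(m_i),\varphi(m_j)\rangle_B$ is not yet a monomial in $\mathbb{Z}[d,w,x,y,z]$. You already acknowledge this identification (``matching the classes $x,y,w$\ldots with their annular counterparts''), so the mathematics is fine; just be aware that the ``labelling'' in the statement is this choice of variables, not the permutation of rows and columns.
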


\begin{proof}
By construction, $\varphi$ is a bijection  between $(Mb_{n})_1$ and $(B_n)_1$. To show that $\varphi$ preserves the Gram determinant it suffices to prove that the bilinear form is preserved. Since $\langle \varphi(m_i), \varphi(m_j) \rangle_{B}$ belongs to 
\begin{eqnarray*}
    \mathcal{S}_{2,\infty}(\mathit{Ann} \times I) &\oplus &\mathcal{S}_{2,\infty}(\mathit{Ann} \times I,  \{ y^{inn}_1, y_2^{inn}\}) \\
    & \oplus & \mathcal{S}_{2,\infty}(\mathit{Ann} \times I,  \{ y^{out}_1, y_2^{out}\}) \  \oplus  \ \mathcal{S}_{2,\infty}(\mathit{Ann} \times I, \{ y^{inn}_1, y_2^{inn}\} \cup \{ y^{out}_1, y_2^{out}\}),
\end{eqnarray*}
 then we only need to choose the following labelling illustrated in Figure \ref{fig:labellingAnnulus}. That is, for example, if one arc is attached to the inner and outer boundary then there exists a corresponding arc that also intersects the inner and outer boundary; we label the element with these pair of arcs by $w$. This corresponds to the element in $\mathcal{S}_{2, \infty}(\mathit{Kb} \ \hat{x} \ I)$ that intersects the two crosscaps once, namely the $w$ curve.
Furthermore, if an arc is attached to the outer boundary then we label it $y$; this corresponds to the $y$ curve in the Klein bottle that intersects the outer crosscap. If it is attached to only the inner boundary we label it $x$. In particular, the element with two arcs, one only attached to the inner boundary and one only attached to the outer boundary, is labeled $xy$. 

\begin{figure}[ht]
    \centering
    $$ \vcenter{\hbox{
\begin{overpic}[scale = .3]{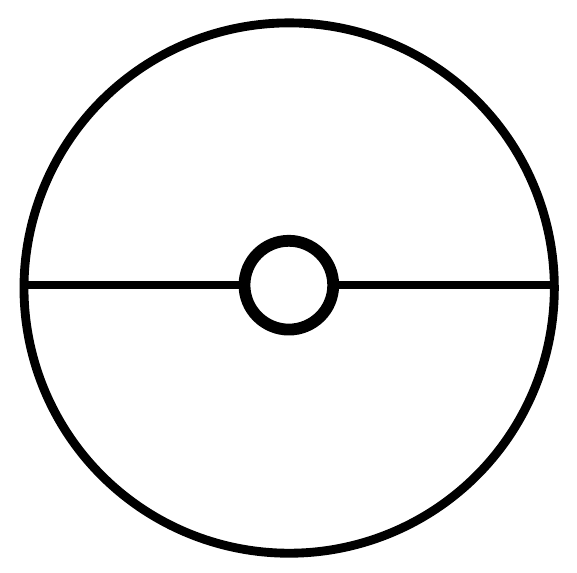}
\end{overpic} }} = w, \vcenter{\hbox{
\begin{overpic}[scale = .3]{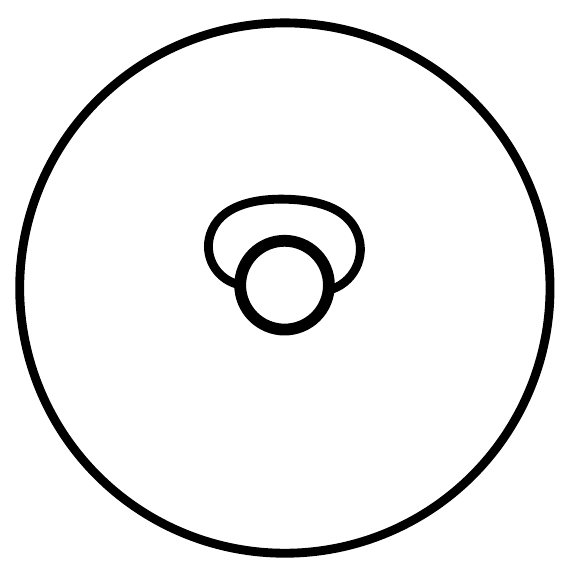}
\end{overpic} }} = x, \vcenter{\hbox{
\begin{overpic}[scale = .3]{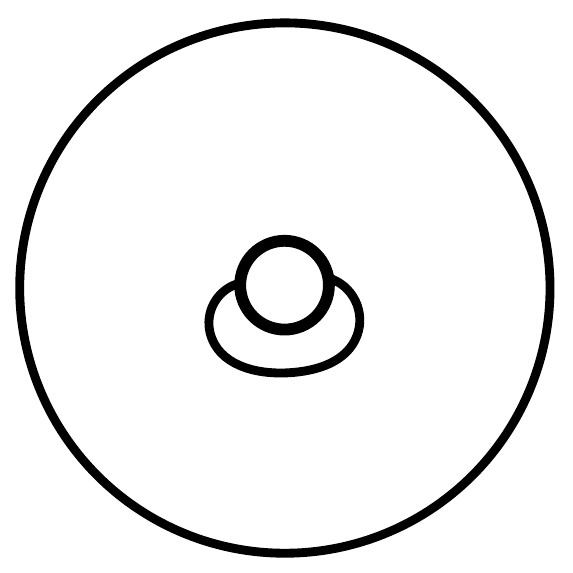}
\end{overpic} }} = x, \vcenter{\hbox{
\begin{overpic}[scale = .3]{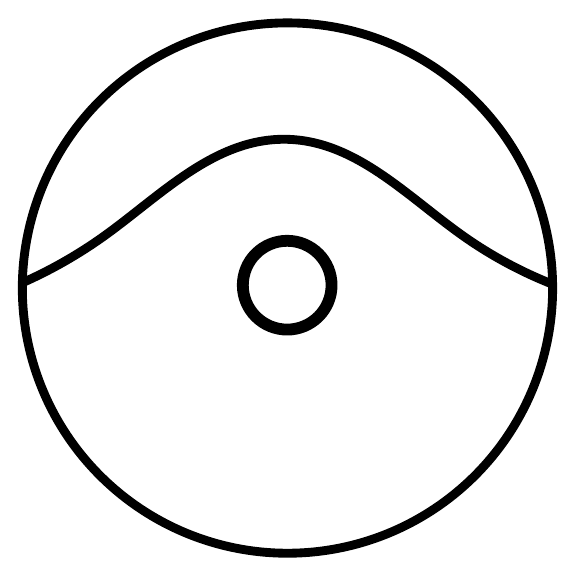}
\end{overpic} }} = y, \vcenter{\hbox{
\begin{overpic}[scale = .3]{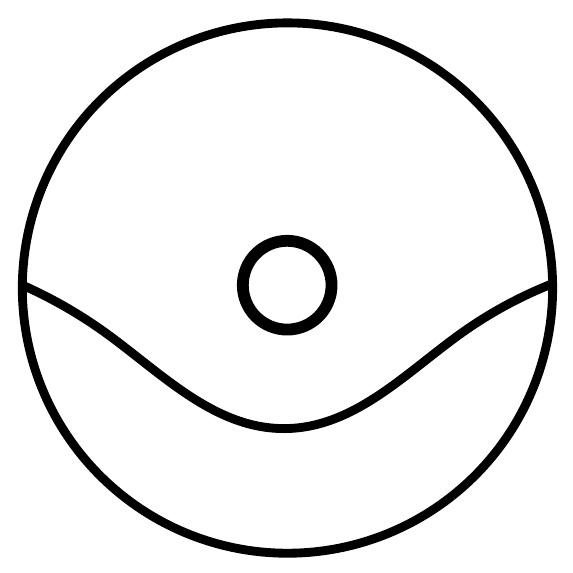}
\end{overpic} }} = y. $$
\caption{A labeling of the curves attached to the boundary of the annulus.}
\label{fig:labellingAnnulus}
\end{figure}
\end{proof}

\begin{remark}
    Even though the bijection $\varphi$ can be extended to $Mb_n$ it will no longer preserve the bilinear form. Indeed this can be seen in Example \ref{example:mbn}; when two arcs intersect the crosscap you might obtain a $d$ or $z$ for the Klein bottle while for the annulus case it would just be $x^2$.
\end{remark}

\subsection{White markers} We introduce elements whose closure remains unlinked from all simple closed curves or arcs in $\mathit{Ann} \times I$. These elements will be used to adapt Chen and Przytycki's proof in \cite{CP} to prove that a factor of the Gram determinant of type $B$ divides the Gram determinant of type $(Mb)_1$. 

\begin{definition} Let $I = [-1,1]$, $M = \mathit{Ann} \times I$ with $2n$ framed marked points attached to the outer boundary of $\mathit{Ann} \times \{ 0 \}$, and let $M$ be decorated with a lollipop $S$. Consider $\mathcal{L} \sqcup \{w_j\}_{j=1}^k \subset M$; a relative framed link $\mathcal{L}$ and a pair of $k$ labelled white marked framed points $\{w_j\}_{j=1}^k$ disjoint from $\mathcal{L}$ and distinct from the $2n$ marked points, called \textbf{white markers}. White markers are either attached and fixed to the boundary of $\mathit{Ann} \times \{ 0 \}$ of $M$, or attached to a second white marker in the interior of $M$. The labelled white markers are identified in $M$ with framed arcs according to their labelling if, up to ambient isotopy, there exists a collection of $k$ pairwise disjoint curves $\{ \gamma_j \}_{j=1}^k$ such that for each $j$, $\gamma_j$ is connected to the core of the framed points $pt_{1,j}$ and $pt_{2,j}$ of the pair of white markers $w_j = \{pt_{1,j}, pt_{2,j}\}$ under the following condition: there exists a neighborhood of each arc $\gamma_j$, $U(\gamma_j) \subset \mathit{Ann} \times \{0\}$, such that $(\cup_{i=1}^k U(\gamma_j)\times I )\cap (\mathcal{L} \cup S) = \varnothing$. 
If the condition is not satisfied then the white markers remain as white marked framed points with attaching information intact but no arcs attached. Furthermore, the sign of each labelling assigned to a white marker will change under an inversion operation.
\end{definition}

\begin{example} \ 
\begin{itemize}
\item[(a)] Let $I = [-1,1]$, $M = \mathit{Ann} \times I$ with $2n$ framed marked points attached to the outer boundary of $\mathit{Ann} \times \{ 0 \}$, and let $M$ be decorated with a lollipop $S$. Suppose that $\mathcal{L}$ consists of only relative framed links with no crossings and suppose $\mathcal{L} \cup w_1 \subset M$ where the pair of white markers $w_1 = \mathit{pt}_{1,1} \cup \mathit{pt}_{2, 1}$ are attached to each other in the interior of $M$. Then there exists a disk $D \subset \mathit{Ann} \times \{0\}$ such that $ w_1 \subset D$ and $(D \times I) \cap (\mathcal{L} \cup S)= \varnothing$. In this case the pair of labelled white markers are identified by $\gamma_1$ in $D$ to produce a simple closed curve. An illustration is given in Figure \ref{fig:whitemarkerclosure1}.

\item [(b)] Now suppose $\mathcal{L} \cup \{w_1, w_2\} \subset M$ where each white marker from $w_1$ is attached to a white marker from $w_2$ in the interior of $M$. Then there exists a disk $D \subset \mathit{Ann} \times \{0\}$ such that $ \{w_1 \cup w_2 \} \subset D$ and $(D \times I) \cap (\mathcal{L} \cup S) = \varnothing$. In this case the pair of white markers are identified by arcs in $D$. An illustration is given in Figure \ref{fig:whitemarkerclosure2}.
\end{itemize}
\end{example}

\begin{figure}[ht]
    \centering
    \begin{subfigure}{.4\textwidth}
\centering
    $$\vcenter{\hbox{\begin{overpic}[scale = .7]{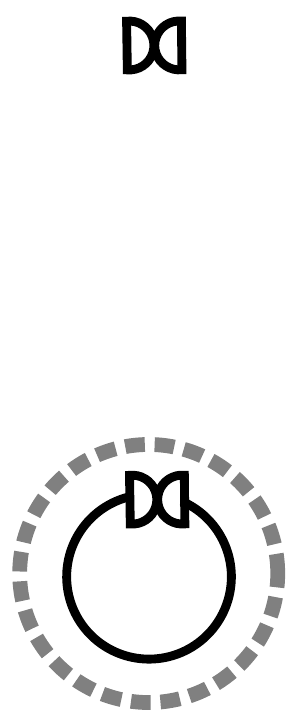}
    \put(33, 145){$1$}
    \put(23, 145){$1$}
    \put(50, 100){closure}
    \put(27, 95){$\Bigg\downarrow$}
    \put(70, 25){$=d$}
\end{overpic} }}$$
    \caption{Closure of one set of white markers.}
     \label{fig:whitemarkerclosure1}
\end{subfigure} \qquad
\begin{subfigure}{.46\textwidth}
\centering
    $$ \vcenter{\hbox{\begin{overpic}[scale = .7]{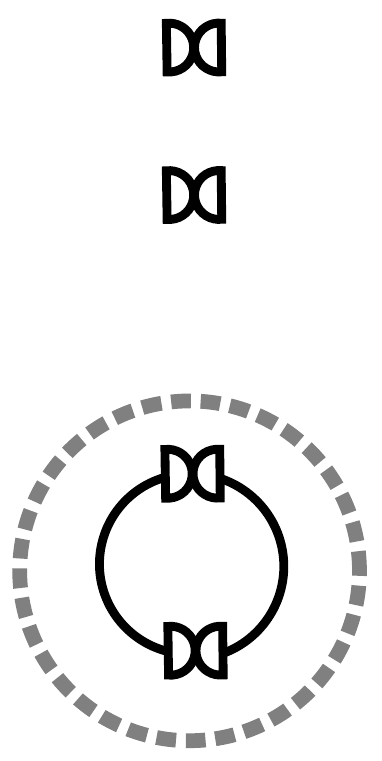}
   \put(50, 90){closure}
   \put(31, 152){$1$}
     \put(41, 152){$2$}
     \put(31, 122){$1$}
     \put(41, 122){$2$}
      \put(82, 30){$=d=$}
    \put(34, 85){$\big\downarrow$}
\end{overpic} }} \qquad \quad \vcenter{\hbox{\begin{overpic}[scale = .7]{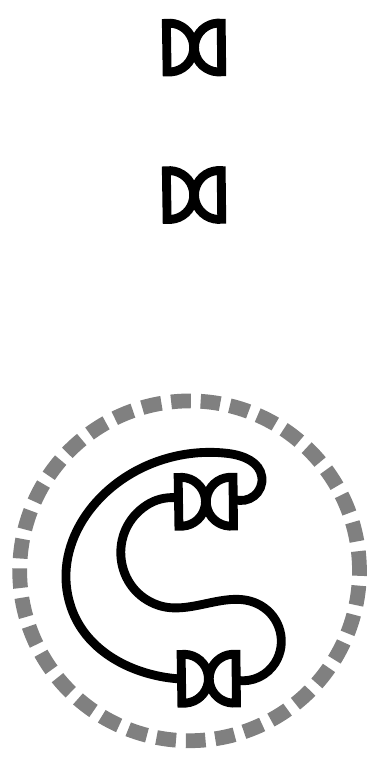}
   \put(50, 90){closure}
   \put(31, 152){$1$}
     \put(41, 152){$2$}
     \put(31, 122){$2$}
     \put(41, 122){$1$}
    \put(34, 85){$\big\downarrow$}
\end{overpic} }}$$
    \caption{Closure with two sets of white markers.}
     \label{fig:whitemarkerclosure2}
\end{subfigure}
    \caption{A local illustration of the closure of arcs in $D \times \{0\} \subset \mathit{Ann} \times [-1, 1]$ obtained by identifying arcs attached to white markers with the same label.}
    \label{fig:whitemarkerexamples}
\end{figure}

So far this notation is trivial. However, if we consider white markers attached to the boundary of $M$, as shown in Figure \ref{fig:whitemarkerexampleobstruction}, we find that there exist relative framed links that obstruct the closure of the white markers. Furthermore, we find obstructions from the position of the white markers regardless of the relative link. 

\begin{figure}[ht]
    \centering
    \begin{subfigure}{.27\textwidth}
\centering
    $$\vcenter{\hbox{\begin{overpic}[scale = .5]{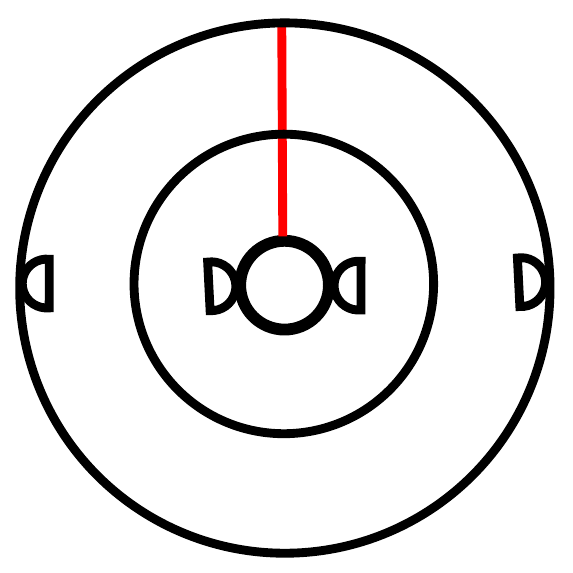}
     \put(6, 48){\fontsize{7}{7}$1$}
     \put(50, 48){\fontsize{7}{7}$2$}
     \put(30, 48){\fontsize{7}{7}$1$}
     \put(71, 48){\fontsize{7}{7}$2$}
\end{overpic} }}$$
    \caption{An obstruction from the $z$ curve isolating the inner and outer boundary points.}     \label{fig:whitemarkerexampleobstruction1}
\end{subfigure} \qquad
\begin{subfigure}{.27\textwidth}
\centering
    $$ \vcenter{\hbox{\begin{overpic}[scale = .5]{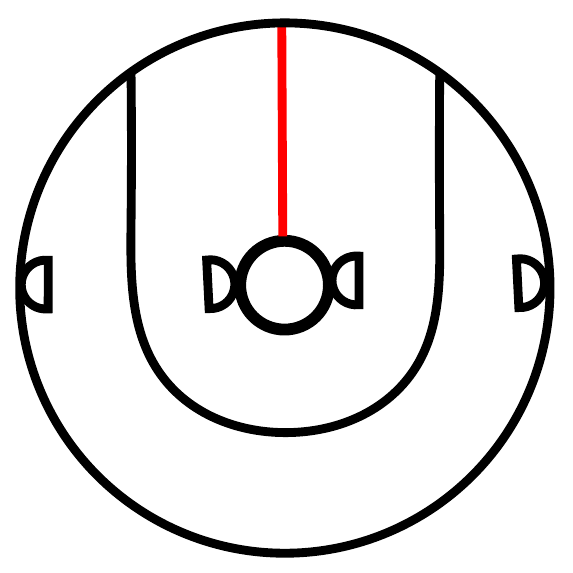}
    \put(6, 48){\fontsize{7}{7}$1$}
     \put(50, 48){\fontsize{7}{7}$2$}
     \put(30, 48){\fontsize{7}{7}$1$}
     \put(71, 48){\fontsize{7}{7}$2$}
\end{overpic} }} $$
    \caption{An obstruction coming from an arc isolating the inner and outer boundary points.} \label{fig:whitemarkerexampleobstruction2}
\end{subfigure} \qquad
\begin{subfigure}{.27\textwidth}
\centering
    $$ \vcenter{\hbox{\begin{overpic}[scale = .5]{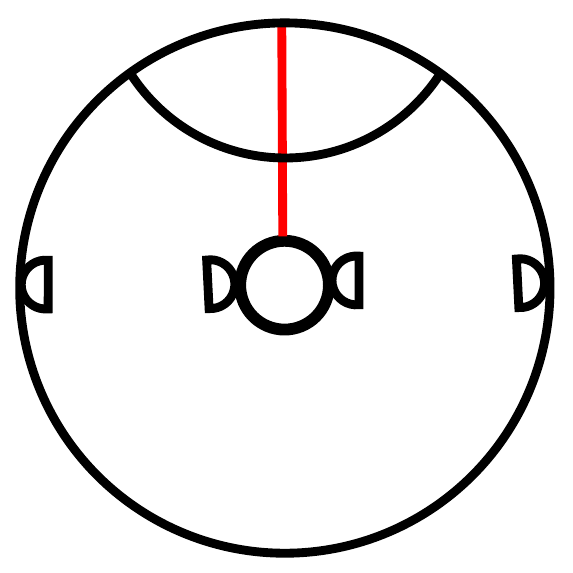}
   \put(6, 48){\fontsize{7}{7}$1$}
     \put(50, 48){\fontsize{7}{7}$1$}
     \put(30, 48){\fontsize{7}{7}$2$}
     \put(71, 48){\fontsize{7}{7}$2$}
\end{overpic} }} $$
    \caption{An obstruction coming from the position of the white markers.}
\label{fig:whitemarkerexampleobstruction3}
\end{subfigure}
    \caption{Examples of obstructions to the closure of white markers.} \label{fig:whitemarkerexampleobstruction}
\end{figure}

\begin{definition}
 Let $I = [-1,1]$, $k \leq n$, $M = \mathit{Ann} \times I$ with $2n$ framed marked points attached to the outer boundary of $\mathit{Ann} \times \{ 0 \}$, and let $M$ be decorated with a lollipop $S$. Let $\mathcal{L}^{fr}(2n, k)$ be the set of all $k$ white markers fixed to the boundary of $\mathit{Ann} \times \{ 0 \}$ along with all relative framed links $\mathcal{L}$ disjoint from the white markers,  $\mathcal{L} \sqcup \{ w_j \}_{j=1}^k$, up to ambient isotopy, while keeping all marked points on the boundary fixed. Furthermore, we restrict the $k$ white markers to finitely many possible placements in the inner boundary and also restrict to allowing at most one white marker between framed points and also between the $1^{st}$ and $2n^{th}$ framed points and the lollipop on the outer boundary.  Let $R$ be a commutative ring with unity, $A \in R$ be invertible, and let $S^{sub}_{2, \infty}(2n, k)$ be the submodule of $R \mathcal{L}^{fr}(2n, k)$ that is generated by the Kauffman bracket skein relations. The \textbf{relative Kauffman bracket skein module of $\boldsymbol{M}$ with white markers} is the quotient
 $$\mathcal{S}_{2,\infty}(\mathit{Ann} \times I, \{x_i\}_1^{2n} \cup \{ w_j \}_{j=1}^k; R, A) = R\mathcal{L}^{\mathit{fr}}(2n, k) / S_{2,\infty}^{\mathit{sub}}(2n, k).$$ 
 
 For simplicity we will denote this skein module by $\mathcal{S}(\mathit{Ann}_{k}^{n})$.
\end{definition}

\begin{corollary}
The relative Kauffman bracket skein module of $\mathit{Ann} \times I$ with white markers, $\mathcal{S}(\mathit{Ann}_{k}^{n})$, is a free $R$ module. The basis contains an infinite number of elements described as follows.

\begin{enumerate}
    \item Crossingless connections between $2n+2k$ framed points in the outer boundary of $\mathit{Ann} \times I$ and $i$ number of boundary parallel curves where $i \geq 0$.
    \item Crossingless connections between $2n+2k$ framed points in $\partial (\mathit{Ann} \times \{0\})$ and $i$ number of boundary parallel curves where $i \geq 0$, $2l$ framed points are in the inner boundary for $l \leq k$, and the arcs attached to the $2l$ framed points are not connected to the outer boundary.
    \item Crossingless connections between $2n+2k$ framed points in $\partial (\mathit{Ann} \times \{0\})$ where $l$ framed points are in the inner boundary for $l < 2k$ and at least one arc that is connected to one of the $l$ framed points is also connected to one of the $2k-l$ framed points lying in the outer boundary.
    \item Crossingless connections between $2n$ framed points in the outer boundary  of $\mathit{Ann} \times \{0\}$, $i$ boundary parallel curves where $i >0$, and $k$ white markers $\{w_j\}_{j=1}^k$ in $\partial (\mathit{Ann} \times \{0\})$, where at least one white marker, say $pt
    _{1,l}$ of $w_l=\{pt_{1,l}, pt_{2,l}\}$, lies in the inner boundary of $\mathit{Ann} \times \{0\}$ and $pt_{2,l}$ lies in the outer boundary. 
     \item Crossingless connections between $2n$ framed points on  the outer boundary $\partial (\mathit{Ann} \times \{0\})$  and $k$ white markers $\{w_j\}_{1}^k$ in $\partial (\mathit{Ann} \times \{0\})$, where for at least one white marker a path connecting it is obstructed by an arc connected to one pair of framed points or by the lollipop; see Figure \ref{fig:whitemarkerexampleobstruction}. For each element, say $b$, in this set of crossingless connections with white markers we also have $bz^i$ where $i>0$.
\end{enumerate}

\end{corollary}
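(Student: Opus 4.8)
The plan is to derive the result from Przytycki's freeness theorem (Theorem \ref{Theorem:freebasis}) together with Corollary \ref{Corollary:basiselements}(c), treating the white markers as a controlled enhancement of the marked-point data. The organizing idea is the dichotomy built into the definition of a white marker: a pair $w_j$ either admits an identifying arc $\gamma_j$ lying in a disk disjoint from $\mathcal{L} \cup S$, in which case it is resolved into a genuine framed arc, or it is topologically obstructed and survives as a pair of framed points on $\partial(\mathit{Ann}\times\{0\})$. This dichotomy is precisely the partition of the claimed basis into the \emph{closable} families (1)--(3) and the \emph{obstructed} families (4)--(5), so the whole proof is built around making this dichotomy an isotopy invariant and then enumerating each side.

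For freeness I would use that, by hypothesis, the $k$ white markers occupy only finitely many admissible placements on the inner and outer boundary (with at most one marker in each gap between consecutive framed points and at most one between the $1^{\text{st}}$ and $2n^{\text{th}}$ points and the lollipop). Once the marker placements are fixed, the configuration is an ordinary relative Kauffman bracket skein module of $\mathit{Ann}\times I$ with a fixed finite set of marked points (the $2n$ outer points together with the $2k$ marker endpoints), so Theorem \ref{Theorem:freebasis} makes each such piece a free $R$-module. Since $\mathcal{S}(\mathit{Ann}_{k}^{n})$ is the direct sum of these pieces over the finitely many placements, and a direct sum of free modules is free, freeness follows; the infinite families indexed by $i$ come from the boundary-parallel curves $z^i$, exactly as in Corollary \ref{Corollary:basiselements}(b)--(c).

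The substance of the argument is the enumeration. In the closable case every marker becomes an arc, so a reduced diagram is a crossingless connection on $2n+2k$ framed points of $\partial(\mathit{Ann}\times\{0\})$ with no trivial components; stratifying these by how many marker/arc endpoints lie on the inner boundary and whether any inner arc reaches the outer boundary yields exactly family (1) (all endpoints on the outer boundary), family (2) ($2l$ inner endpoints with $l\le k$ and no inner-to-outer arc), and family (3) (at least one inner-to-outer arc, $l<2k$). In the obstructed case I would invoke the classification of obstructions illustrated in Figure \ref{fig:whitemarkerexampleobstruction}: an obstruction caused by a homotopically nontrivial ($z$) curve separating the two boundary components (Figure \ref{fig:whitemarkerexampleobstruction1}), together with a marker straddling the inner and outer boundary, produces family (4) with its powers $z^i$, $i>0$; an obstruction caused instead by an isolating arc of the relative link or by the lollipop (Figures \ref{fig:whitemarkerexampleobstruction2}--\ref{fig:whitemarkerexampleobstruction3}) produces family (5), again recording the accompanying powers $z^i$.

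The main obstacle I anticipate is proving that families (1)--(5) are simultaneously exhaustive and mutually disjoint, i.e. that every reduced crossingless configuration lands in exactly one family. Two points are delicate. First, one must check that the closure condition in the definition of a white marker is a genuine ambient-isotopy invariant, so that \emph{closable} versus \emph{obstructed} is well defined and stable under the allowed isotopies fixing the boundary and the lollipop; this is what legitimizes separating (1)--(3) from (4)--(5). Second, one must rule out double counting between (3) and (4), where an inner-to-outer connection could a priori be realized either by a resolved marker arc or by an obstructed straddling marker; the restriction permitting at most one white marker in each gap is precisely what forces a unique normal form and prevents such overlap. Once well-definedness of the dichotomy is secured, linear independence of the listed elements is immediate from Theorem \ref{Theorem:freebasis} applied placement-by-placement, since distinct reduced diagrams are distinct basis vectors in each free summand.
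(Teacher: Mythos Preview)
The paper does not actually supply a proof of this corollary: it is stated immediately after the definition of $\mathcal{S}(\mathit{Ann}_k^n)$ and the text then moves directly to the subsection on the lollipop method. The evident intent is that it is a corollary of Theorem~\ref{Theorem:freebasis} (and of the descriptions in Corollary~\ref{Corollary:basiselements}), with the enumeration (1)--(5) simply cataloguing the possible reduced diagrams once the white-marker closure rule has been applied. In that sense your overall strategy---fix a placement of markers, invoke Przytycki's freeness theorem on the resulting ordinary RKBSM, and then sort reduced diagrams by the closable/obstructed dichotomy---is exactly the argument the authors are implicitly relying on, and there is nothing further in the paper to compare against.

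That said, the one place where your write-up goes beyond what the paper justifies is your direct-sum decomposition over placements. As written, the module $\mathcal{S}(\mathit{Ann}_k^n)$ is generated by pairs $(\mathcal{L},\{w_j\})$ where the closure of the markers depends on $\mathcal{L}$, so the summands indexed by different marker placements are not a priori independent pieces of the same module; rather, the closure rule collapses some configurations with markers into configurations without them. Your identification of this issue (the well-definedness of ``closable versus obstructed'' under ambient isotopy, and the potential overlap between families (3) and (4)) is the genuine content here, and the paper does not address it either. So your proposal is at least as complete as the paper's own treatment, and arguably more honest about where the work lies.
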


\subsection{lollipop method} In this section we use the lollipop method discussed in \cite{CP} by defining similar maps. These maps are modified by using white markers so that the inner boundary is left intact and the $x, y$, and $w$ curves remain unchanged under the maps and the bilinear form.

\begin{definition} Define a linear map $\psi_k: \mathcal{S}((B_n)_1) \to \mathcal{S}((B_{n+k})_1)$ on the basis as follows. For $b \in (B_n)_1$:
    \begin{enumerate}
        \item Decorate $b$ with a lollipop $S$ and let $\pmb L$ denote the arcs whose minimum intersection points with the lollipop is equal to one.
        \item Include $k$ marked points, denoted by $\ell_k$, between the $2n^{th}$ marked point and the lollipop.
        \item Include $k$ marked points, denoted by $r_k$, between the lollipop and the $1^{st}$ marked point. 
        \item Add parallel curves denoted by $U_{b, k}$ that connects $\ell_k$ to $r_k$ in such a way that each curve crosses over $\pmb L$ on the left of the lollipop, intersects the lollipop once, and crosses under $\pmb L$ on the right of the lollipop.
    \end{enumerate}
An illustration is shown in Figure \ref{fig:psikmap}.
\end{definition}

\begin{figure}[ht]
    \centering
    \begin{subfigure}{.4\textwidth}
\centering
    $$\vcenter{\hbox{\begin{overpic}[scale = .6]{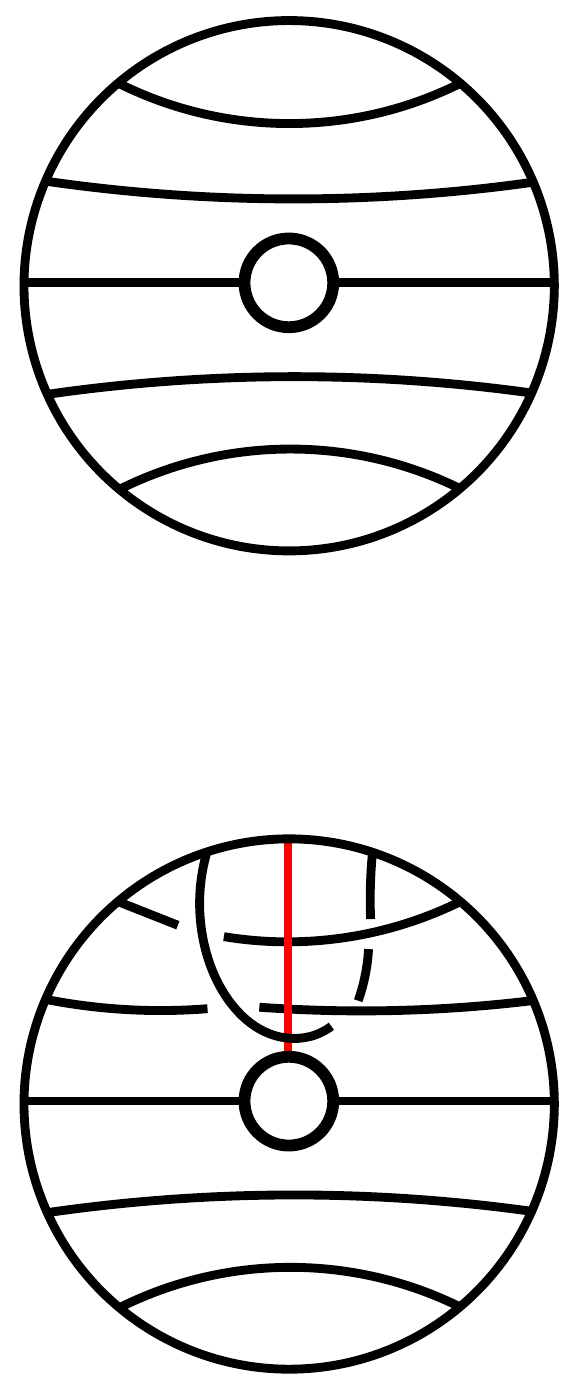}
    \put(60, 115){$\psi_k$}
    \put(46, 117){$\Bigg\downarrow$}
    \put(39, 84){$k$}
\end{overpic} }}$$
    \caption{$\psi_k$ under an element in $B_{5, 1}$.}
     \label{fig:psikmap1}
\end{subfigure} \quad
\begin{subfigure}{.46\textwidth}
\centering
    $$\vcenter{\hbox{\begin{overpic}[scale = .6]{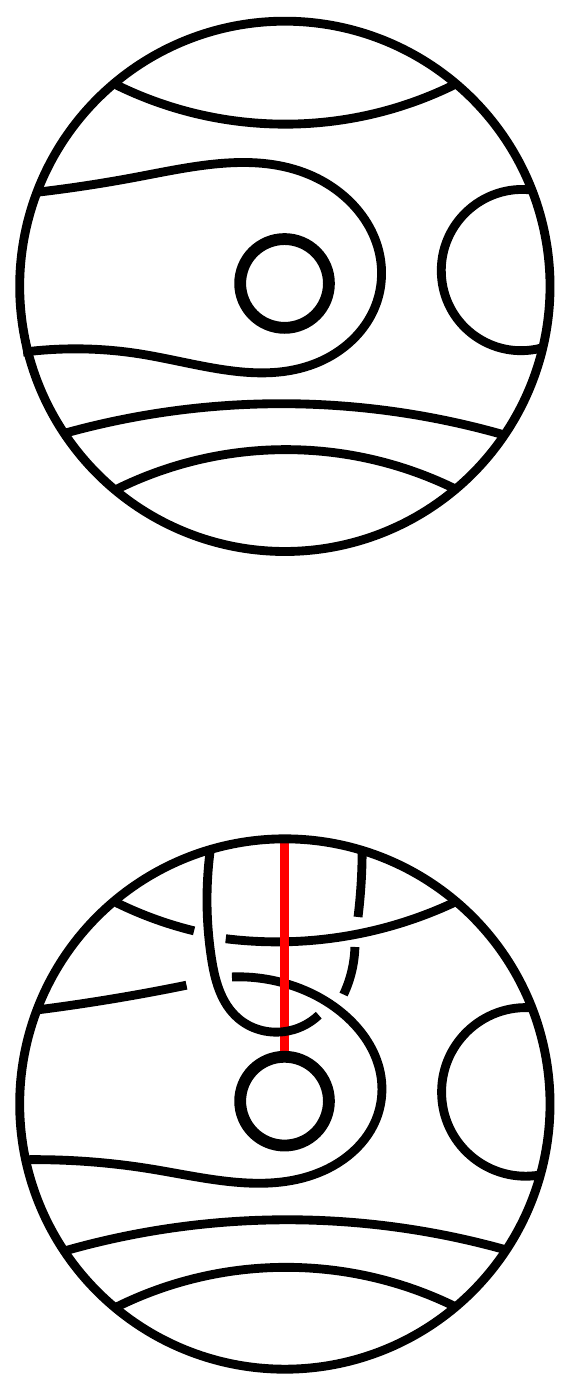}
    \put(60, 115){$\psi_k$}
    \put(46, 117){$\Bigg\downarrow$}
    \put(39, 84){$k$}
\end{overpic} }}$$
    \caption{$\psi_k$ under an element in $B_{5, 0}$.}
     \label{fig:psikmap2}
\end{subfigure}
    \caption{An illustration of the map $\psi_k$ under two basis elements of $\mathcal{S}((B_5)_1)$.}
    \label{fig:psikmap}
\end{figure}

\begin{definition} Define a linear map $\beta_k: \mathcal{S}((B_{n+k})_1) \mapsto \mathcal{S}(B_{n+k,0})\oplus \mathcal{S}(\mathit{Ann}_{4}^{n+k})$ on the basis as follows. For $b \in (B_{n+k})_1$:
\begin{enumerate}
    \item If $b \in B_{n+k, 0}$, then
    \begin{enumerate}
        \item decorate $b$ with a lollipop $S$ and let $\pmb L$ denote the arcs whose minimum intersection points with the lollipop is equal to one. Denote by $\ell_k$ the first $k$ marked points from the left of the lollipop and $r_k$ by the first $k$ points from the right of the lollipop.
        \item Push $\pmb L$ to the other side of the inner boundary of $\mathit{Ann} \times \{0\}$.
        \item Insert a copy of the $k^{th}$ Jones-Wenzl idempotent, $f_k$, close to $\ell_k$, into the arcs connected to $\ell_k$,  and another copy close to $r_n$ into the arcs connected to $r_k$.
    \end{enumerate}
    \item If $b \in B_{n+k, 1}$, then
    \begin{enumerate}
        \item Decorate the arc attached to $y_1$ with a labelled white marker labelled $1$.
        \item Decorate the arc attached to $y_2$ with a labelled white marker labelled $2$.
        \item Push $\pmb L \cup U_b$ to the other side of the inner boundary of $\mathit{Ann} \times \{0\}$.
        \item Insert two copies of the $k^{th}$ Jones-Wenzl idempotent, $f_k$, into $U_{b, k}$, one close too $\ell_n$ and another close to $r_n$.
    \end{enumerate}
\end{enumerate}
An illustration is shown in Figure \ref{fig:betakmap}.
\end{definition}

\begin{figure}[ht]
    \centering
    \begin{subfigure}{.4\textwidth}
\centering    $$\vcenter{\hbox{\begin{overpic}[scale = .6]{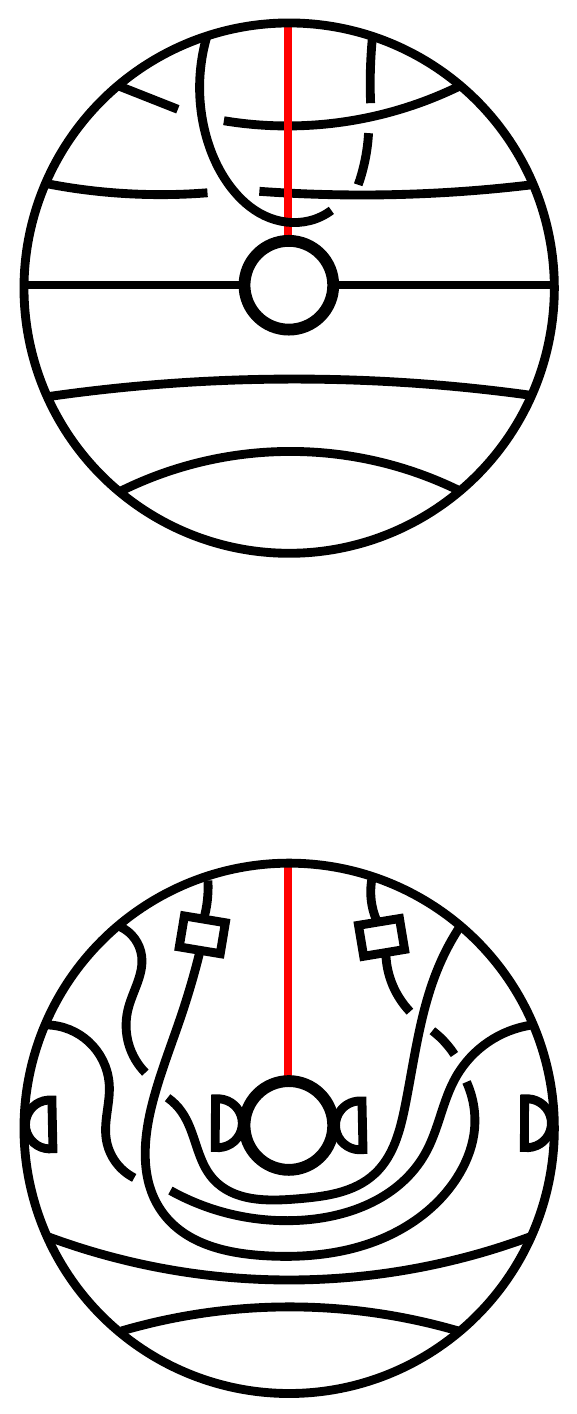}
    \put(60, 115){$\beta_k$}
     \put(37, 229){$k$}
    \put(46, 117){$\Bigg\downarrow$}
    \put(39, 87){\fontsize{8}{8}$k$}
     \put(57, 87){\fontsize{8}{8}$k$}
     \put(36, 57){\fontsize{8}{8}$1$}
     \put(7, 57){\fontsize{8}{8}$1$}
      \put(60, 57){\fontsize{8}{8}$2$}
     \put(89, 57){\fontsize{8}{8}$2$}
\end{overpic} }}$$
    \caption{$\beta_k$ under an element in $B_{5+k, 1}$.}
     \label{fig:betakmap1}
\end{subfigure} \quad
\begin{subfigure}{.46\textwidth}
\centering    $$\vcenter{\hbox{\begin{overpic}[scale = .6]{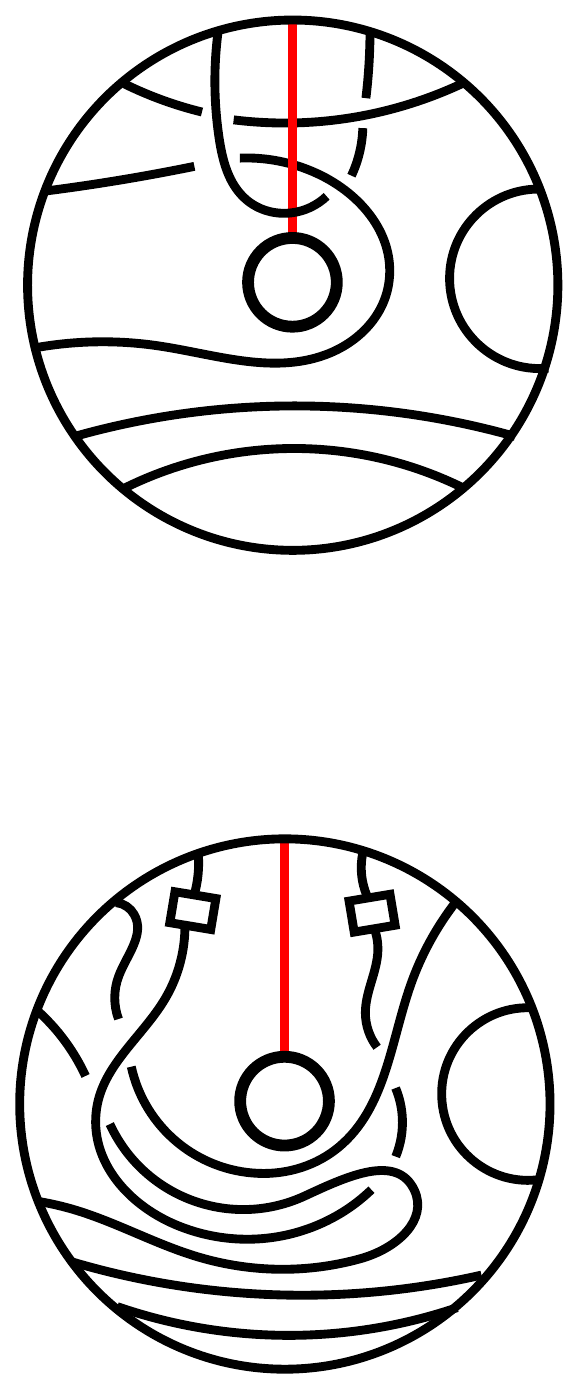}
    \put(60, 115){$\beta_k$}
     \put(39, 226){$k$}
    \put(46, 117){$\Bigg\downarrow$}
    \put(38, 87){\fontsize{8}{8}$k$}
     \put(56, 87){\fontsize{8}{8}$k$}
\end{overpic} }}$$
    \caption{$\beta_k$ under an element in $B_{5+k, 0}$.}
     \label{fig:betakmap2}
\end{subfigure}
    \caption{An illustration of the map $\beta_k$ under two elements of $(B_{5+k})_1$.}
    \label{fig:betakmap}
\end{figure}

The next lemma is a direct result of the previous definition. 

\begin{lemma} Let $b_i, b_j \in (B_n)_1$.
If $\langle b_i, b_j \rangle_B = x^py^mw^hd^nz^r$ for $p, m, h, n, r \geq 0$, then 
$$\langle \beta_0 \circ \psi_0(b_i), \beta_0 \circ \psi_0(b_j) \rangle_B = x^py^mw^hd^{n+r}.$$ 
That is, all $z$ curves become homotopically trivial. 
\end{lemma}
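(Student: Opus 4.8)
The plan is to unwind the two maps at $k=0$ and then analyse the pairing one curve at a time. At $k=0$ the map $\psi_0$ introduces no new marked points and the family $U_{b,0}$ is empty, while in $\beta_0$ the inserted idempotent $f_0$ is the empty tangle; hence $\beta_0\circ\psi_0$ acts on a basis element $b\in(B_n)_1$ by decorating $b$ with the lollipop $S$, isolating the sub-collection $\pmb L$ of arcs whose minimal intersection number with $S$ equals one, and pushing $\pmb L$ to the far side of the inner boundary of $\mathit{Ann}\times\{0\}$; in the case $b\in B_{n,1}$ it additionally replaces the two arcs attached to $y_1$ and $y_2$ by white markers labelled $1$ and $2$. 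I would first record this reduction explicitly, so that the only geometric operation left to understand is the single push of $\pmb L$ across the inner boundary.

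Next I would form $\langle\beta_0\psi_0(b_i),\beta_0\psi_0(b_j)\rangle_B$ by gluing along the outer boundary, exactly as in the definition of $\langle\,,\,\rangle_B$, and sort the resulting disjoint curves into those supported on the outer-boundary arcs (the annulus curves carrying $d$ and $z$) and those meeting the inner boundary (carrying $x,y,w$ via the white markers, as in Figure \ref{fig:labellingAnnulus}). For the latter, the essential point is that by the definition of $B_{n,1}$ the two inner arcs are disjoint from the lollipop, hence from $\pmb L$; the push therefore never touches them, and the white markers inherited from $b_i$ and from the inversion of $b_j$ reproduce exactly the attaching data to the inner boundary that the original arcs carried in $\langle b_i,b_j\rangle_B$. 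Comparing the obstructed and unobstructed white markers against the original inner-boundary connections then shows that the contribution is the same monomial $x^py^mw^h$; here I would also verify that the pushed copy of $\pmb L$, routed around the far side of the inner hole, creates and destroys no obstruction of a white marker (cf.\ Figure \ref{fig:whitemarkerexampleobstruction}), so that no extra factor of $d$ or $z$ enters from these curves.

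The heart of the argument is the annulus curves. Because the lollipop $S$ is a properly embedded arc from the outer to the inner boundary, cutting $\mathit{Ann}\times\{0\}$ into a disk, a simple closed curve in the pairing is homotopically nontrivial (a $z$ curve) precisely when it meets $S$ an odd number of times and is null-homotopic (a $d$ curve) otherwise, and every such odd intersection is carried by an arc of $\pmb L$. I would then show that pushing $\pmb L$ to the other side of the inner boundary removes exactly these intersections: each pushed arc now passes on the opposite side of the inner hole and can be isotoped off $S$, so every one of the $r$ curves that previously met $S$ an odd number of times becomes disjoint from $S$ and hence null-homotopic, while every curve already disjoint from $S$ stays so. This turns $d^nz^r$ into $d^{n+r}$ and, combined with the previous paragraph, leaves $x^py^mw^h$ untouched, which is the assertion.

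The main obstacle is precisely this last geometric step: one must verify rigorously that a single push of $\pmb L$ across the inner boundary flips the $S$-parity of every affected closed curve from odd to even, and of no unaffected curve, and does so once $b_i$ and the inversion of $b_j$ have been glued (so that a single $z$ curve may use arcs of $\pmb L$ coming from both sides). To control this I would pass to the cut-open annulus, a rectangle in which $S$ becomes a pair of opposite sides; the push becomes an explicit re-routing of the endpoints of $\pmb L$ from one of these sides to the other, and computing the resulting class in $\pi_1(\mathit{Ann})\cong\mathbb Z$ shows that the winding number of each glued curve drops to zero. Finally I would check that this isotopy, performed using the $I$-direction of $\mathit{Ann}\times I$, neither merges nor splits curves, so that the total number of annulus curves is preserved and the count $n+r$ is correct.
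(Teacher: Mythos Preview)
Your argument is correct and, in fact, considerably more explicit than what the paper offers. The paper does not prove this lemma at all: it simply declares the statement to be ``a direct result of the previous definition'' of $\beta_k$ and $\psi_k$. So there is no detailed proof in the paper to compare against.

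That said, your unwinding of the $k=0$ case is exactly the right way to justify the claim. Your identification of $\beta_0\circ\psi_0$ as ``push $\pmb L$ across the inner boundary and (in the $B_{n,1}$ case) replace the inner arcs by white markers'' is correct, as is your observation that the arcs attached to $y_1,y_2$ are disjoint from the lollipop by the definition of $B_{n,1}$ and hence untouched by the push, which preserves the $x,y,w$ labels. The lollipop-parity argument for why $z$ curves become $d$ curves is also sound: in the glued annulus every simple closed curve winds $0$ or $1$ times, and the push trivialises precisely the latter. The one place you flag as an obstacle---checking that the push does not merge or split curves and that it works uniformly across arcs coming from both $b_i$ and the inverted $b_j$---is handled cleanly by your cut-open-annulus picture, since the push is an ambient isotopy in $\mathit{Ann}\times I$ supported away from the outer boundary.

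In short: same approach as the paper (which is simply ``look at the definition''), but you have actually written it out.
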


\begin{definition}
Let $H(x, y) \in \mathcal{S}_{2, \infty}(S^3)$ denote the Hopf link in $S^3$ decorated by $x$ and $y$. We define a linear map $\xi_k: \mathcal{S}_{2, \infty}(\mathit{Kb} \ \hat{\times}\ I) \to \mathbb{Z}[A^{\pm 1}x,y,w]$ such that $\xi_k$ is the identity on $x, y, w,$ and $ d$, and
$$ \xi_k (z) := H(z, f_k),$$
where the second component is decorated with the $k^{th}$ Jones-Wenzl idempotent $f_k$.
\end{definition}

Recall from W. B. R. Lickorish in \cite{Lic} that 
\begin{equation}
\vcenter{\hbox{
\begin{overpic}[scale=.3]{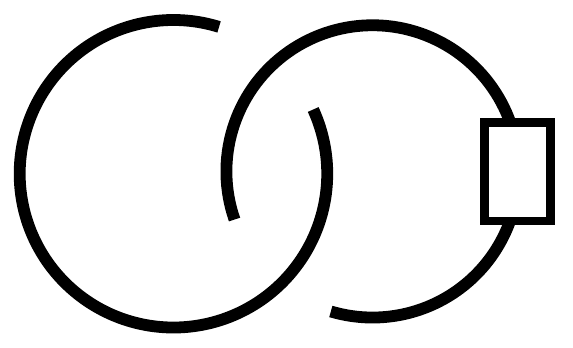}
\put(-5,0){\fontsize{9}{9}$m$}
\put(45,0){\fontsize{9}{9}$k$}
\end{overpic}}} = (-A^{2(k+1)}-A^{-2(k+1)})^m \Delta_k = ((-1)^{k}T_{k+1})^m(d) \Delta_k.
\end{equation}
Therefore, 
$$ \xi_k(z^m) = H(z^m, f_k) = \vcenter{\hbox{
\begin{overpic}[scale=.3]{Hopffn1.pdf}
\put(-5,0){\fontsize{9}{9}$m$}
\put(45,0){\fontsize{9}{9}$k$}
\end{overpic}}} =  (-A^{2(k+1)}-A^{-2(k+1)})^m \Delta_k.$$

\begin{example} Let $d = -A^2-A^{-2}$. Then
$$\xi_k \left( \vcenter{\hbox{
\begin{overpic}[scale=.3]{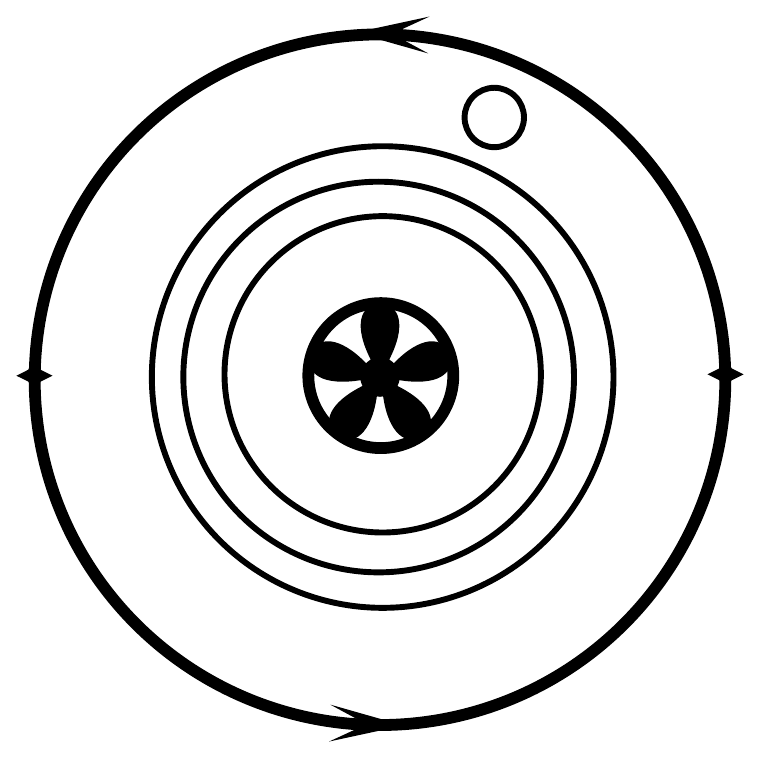}
\end{overpic}}} \right) = dH(z^3, f_k) = d(-A^{2(k+1)}-A^{-2(k+1)})^3 \Delta_k = d((-1)^{k}T_{k+1}(d))^3\Delta_k,$$

$$\xi_k \left( \vcenter{\hbox{
\begin{overpic}[scale=.3]{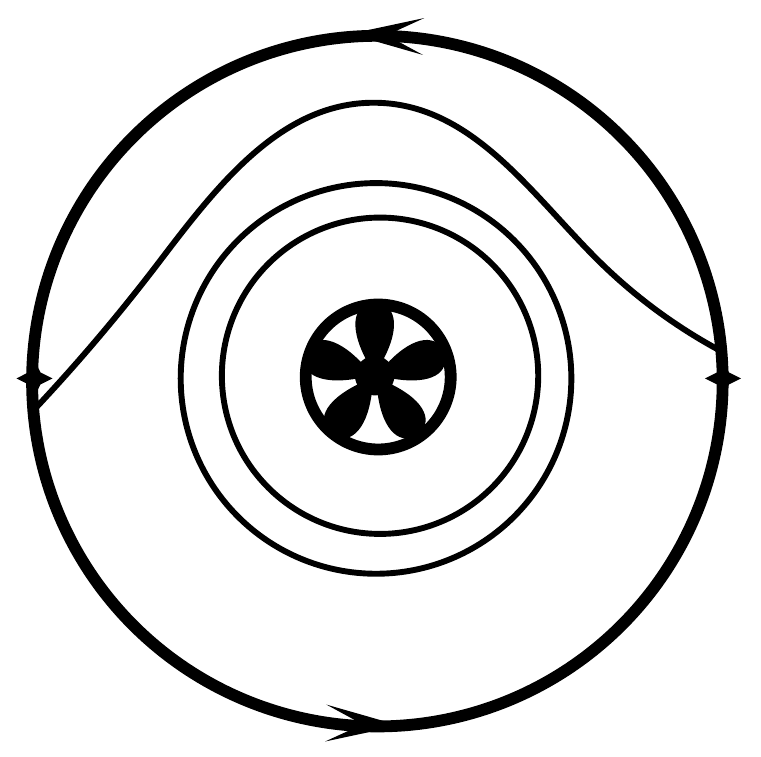}
\end{overpic}}} \right) = yH(z^2, f_k)= y((-1)^{k}T_{k+1}(d))^2\Delta_k,$$

and 
$$\xi_k \left( \vcenter{\hbox{
\begin{overpic}[scale=.3]{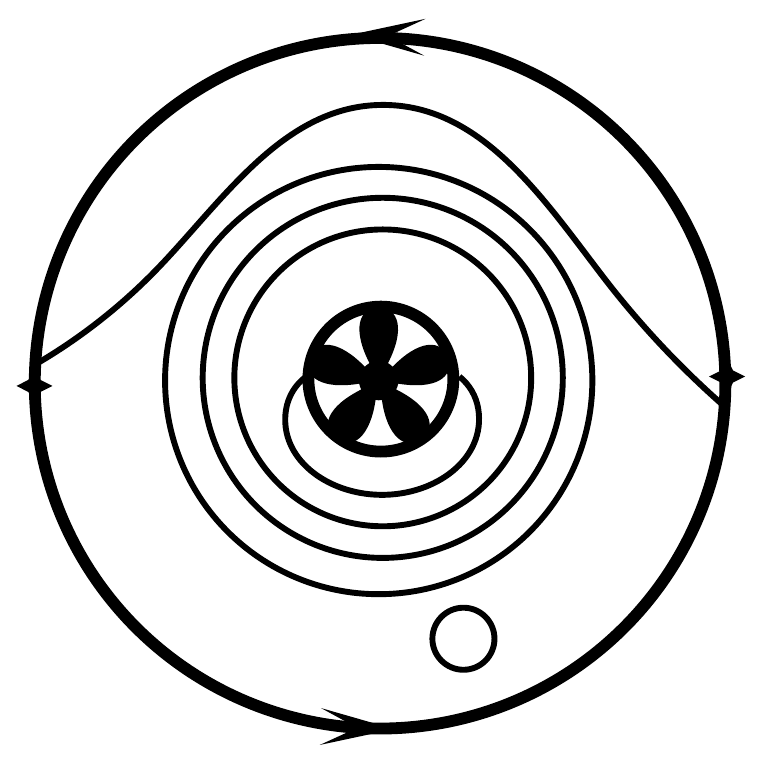}
\end{overpic}}} \right) = dxy((-1)^{k}T_{k+1}(d))^3\Delta_k.$$
\end{example}

\begin{definition}
Define the matrix $F_{n, k}$ using $\xi_k$ as follows. For $m_i, m_j \in (\mathit{Mb}_n)_1$,
\begin{equation*}
    F_{n, k} = (\xi_{k-1}(\langle m_i, m_j\rangle_{\mathit{Mb}}))_{1 \leq i, j \leq \binom{2n}{n-1}+\binom{2n}{n}}.
\end{equation*}
\end{definition}
 
\begin{remark} By the definition, if $d=-A^2-A^{-2}$, then
$$G^{(\mathit{Mb})_1}_n(d,(-1)^{k-1} T_k(d), x, y, w) = \frac{1}{\Delta_{k-1}} F_{n,k}.$$

Furthermore, if we define $F'_{n,k}$ on $\mathit{Mb}_n$ in a similar way, $ F'_{n, k} = (\xi_{k-1}(\langle m'_i, m'_j\rangle_{\mathit{Mb}}))_{1 \leq i, j \leq |\mathit{Mb}_n|},$
then 
$$G^{\mathit{Mb}}_n(d,(-1)^{k-1} T_k(d), x, y, w) = \frac{1}{\Delta_{k-1}} F'_{n,k}.$$
\end{remark}

The following lemma is a direct result of the construction of the maps $\beta_k$, $\psi_k$, and $\varphi$. Notice that $\varphi$ is a bijection between the sets $(Mb_n)_1$ and $(B_n)_1$ which preserves the bilinear form, $\psi_k$ introduces $k$ trivial link components linked to the $z$ curves under the bilinear form of type $B$, and $\beta_k \circ \psi_k$, under the bilinear form of type $B$, inserts the $k^{th}$ Jones-Wenzl idempotent into the $k$ trivial link components. Additionally, $\beta_k \circ \psi_k$ pushes all boundary parallel curves through the inner boundary of the annulus while preserving the number of $x, y$, and $w$ curves.
\begin{lemma}\label{Lemma:maps} Let $\eta_k = \beta_k \circ \psi_k \circ \varphi$. For $m_i, m_j \in (\mathit{Mb}_n)_1$, 
$$\xi_{k}(\langle m_i, m_j\rangle_{\mathit{Mb}}) = \langle     \eta_k (m_i) , \eta_k (m_j) \rangle_B. $$
\end{lemma}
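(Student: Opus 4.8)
The plan is to prove the identity by a curve-by-curve bookkeeping argument, assembling the three properties of $\varphi$, $\psi_k$, and $\beta_k$ recorded just above the statement. Since both sides are computed from the same pair $m_i, m_j$, it suffices to track what each geometric ingredient of the monomial $\langle m_i, m_j\rangle_{\mathit{Mb}} = d^m x^a y^b z^l w^h$ contributes to $\langle \eta_k(m_i), \eta_k(m_j)\rangle_B$, and to check that this matches the effect of $\xi_k$ factor by factor. Because $\xi_k$ is the identity on $d, x, y, w$ and sends $z^l$ to $H(z^l, f_k)$, the whole statement reduces to two assertions: that $\beta_k \circ \psi_k$ leaves the $d, x, y, w$ contributions unchanged, and that it replaces each boundary-parallel $z$ curve by a $z$ curve Hopf-linked with a single $f_k$-decorated loop.

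First I would invoke Lemma \ref{lemma:bijection} to rewrite the left-hand input: under the labelling of Figure \ref{fig:labellingAnnulus}, $\varphi$ carries the type $\mathit{Mb}$ pairing to the type $B$ pairing, so that $\langle \varphi(m_i), \varphi(m_j)\rangle_B$ is exactly the same monomial $d^m x^a y^b z^l w^h$. I would then apply $\psi_k$ and $\beta_k$ to $\varphi(m_i)$ and $\varphi(m_j)$ and glue along the outer boundary. The arcs producing the $x$, $y$, and $w$ labels are attached to the inner and/or outer boundary and are untouched by the fusing step of $\beta_k$; the white markers that $\beta_k$ places on them satisfy the non-obstruction closure condition, so upon gluing they close up to the same arcs and reproduce $x$, $y$, $w$ without change, while homotopically trivial circles stay trivial and contribute $d$. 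This extends the reasoning of the $k=0$ lemma stated above, which is the special case where the idempotent bookkeeping is trivial and each $z$ simply becomes $d$.

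For the $z$ curves I would argue as follows. Each boundary-parallel closed curve in the glued diagram arises from the arcs $\pmb L$ meeting the lollipop once. By construction $\psi_k$ adds $k$ parallel strands $U_{b,k}$ that pass over $\pmb L$ to the left of the lollipop, under $\pmb L$ to its right, and cross the lollipop once; after gluing $\eta_k(m_i)$ with the inversion of $\eta_k(m_j)$ the endpoints $\ell_k$ and $r_k$ match, so these strands close into $k$ parallel loops that encircle each $z$ curve exactly once. The map $\beta_k$ inserts $f_k$ into these $k$ strands, fusing them into a single $f_k$-coloured loop. Thus each $z$ becomes a $z$ curve Hopf-linked once with $f_k$, i.e. $H(z, f_k)$, and $l$ parallel copies yield $H(z^l, f_k)$, exactly $\xi_k(z^l)$. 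Collecting the factors gives $\langle \eta_k(m_i), \eta_k(m_j)\rangle_B = d^m x^a y^b w^h\, H(z^l, f_k) = \xi_k\big(d^m x^a y^b z^l w^h\big) = \xi_k(\langle m_i, m_j\rangle_{\mathit{Mb}})$, as desired.

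The main obstacle I anticipate is the linking computation for the $z$ curves: one must verify that the prescribed over/under pattern of $U_{b,k}$ and its single intersection with the lollipop force the closed loops to link each $z$ curve with linking number exactly one, so that the resulting configuration is a genuine Hopf link and not a split or multiply-linked one. I would settle this by a local isotopy analysis in a neighbourhood of the lollipop, checking that the over-crossing on the left and the under-crossing on the right combine into a single clasp after gluing. A secondary point to confirm is that the white markers introduced on the $x$, $y$, $w$ arcs never become obstructed in the sense of Figure \ref{fig:whitemarkerexampleobstruction}, so that they genuinely close and do not alter the $x$, $y$, $w$ counts.
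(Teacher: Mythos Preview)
Your proposal is correct and follows essentially the same approach as the paper. In fact, the paper provides no formal proof at all: it simply declares the lemma ``a direct result of the construction of the maps $\beta_k$, $\psi_k$, and $\varphi$'' and lists the same three observations you expand upon---that $\varphi$ preserves the bilinear form, that $\psi_k$ introduces $k$ link components linked with the $z$ curves, and that $\beta_k$ inserts $f_k$ into those components while preserving the $x$, $y$, $w$ counts---so your write-up is a more detailed version of exactly what the authors had in mind.
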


The next theorem is our main result regarding the structure of the closed formula for the Gram determinant of type $(Mb)_1$.

\begin{theorem} \label{MainTheorem} 
$$\prod\limits_{k=1}^n(T_k(d)+(-1)^k z)^{\binom{2n}{n-k}} \mbox{ divides } D^{(\mathit{Mb})_1}_n(d, z, x, y, w).$$
\end{theorem}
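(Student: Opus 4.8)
The plan is to prove the divisibility one linear factor at a time and then multiply. Each $T_k(d)+(-1)^k z$ is, up to the unit $(-1)^k$, the polynomial $z-z_k$ with $z_k:=(-1)^{k-1}T_k(d)$, and these are pairwise non-associate irreducibles in the UFD $\mathbb{Z}[A^{\pm1},x,y,w][z]$ because the roots $z_k$ are distinct (the $T_k(d)$ have distinct degrees $k$ in $d$). Consequently it suffices to prove, for each fixed $k$ with $1\le k\le n$, that $(T_k(d)+(-1)^kz)^{\binom{2n}{n-k}}$ divides $D_n^{(\mathit{Mb})_1}$; the product then divides since no two of the factors share an irreducible. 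Throughout I would regard $d$ as a formal variable and use that $d=-A^2-A^{-2}$ induces a field embedding $\mathbb{Q}(d,x,y,w)\hookrightarrow\mathbb{Q}(A,x,y,w)$, through which divisibility by the factor $z-z_k$ (monic in $z$ up to a unit) descends, so it is harmless to carry out the computation after this specialization.

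The first real step is to convert each single divisibility into a corank estimate. Working over the field $F=\mathbb{Q}(A,x,y,w)$ and localizing $F[z]$ at the prime $(z-z_k)$ to obtain a discrete valuation ring, the Smith normal form of the symmetric matrix $G_n^{(\mathit{Mb})_1}$ shows that the multiplicity of $z-z_k$ in $D_n^{(\mathit{Mb})_1}=\det G_n^{(\mathit{Mb})_1}$ is at least the corank of the specialized matrix $G_n^{(\mathit{Mb})_1}\big|_{z=z_k}$, provided $D_n^{(\mathit{Mb})_1}\not\equiv 0$, which holds by Example \ref{exampleMB1}. Hence the goal reduces to the inequality $\operatorname{corank}\big(G_n^{(\mathit{Mb})_1}\big|_{z=z_k}\big)\ge\binom{2n}{n-k}$.

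Next I would translate this corank into the maps constructed above. Because $z_k=(-1)^{k-1}T_k(d)$ is exactly the substitution appearing in the Remark, we have $G_n^{(\mathit{Mb})_1}\big|_{z=z_k}=\Delta_{k-1}^{-1}F_{n,k}$, and since $\Delta_{k-1}\neq0$ in $F$ the two matrices have the same corank. By Lemma \ref{Lemma:maps}, $F_{n,k}=\big(\langle\eta_{k-1}(m_i),\eta_{k-1}(m_j)\rangle_B\big)$ is the type-$B$ Gram matrix of the images $\eta_{k-1}(m_i)$, where $\eta_{k-1}=\beta_{k-1}\circ\psi_{k-1}\circ\varphi$. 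Any linear relation $\sum_i c_i\,\eta_{k-1}(m_i)=0$ then forces $(c_i)\in\ker F_{n,k}$, so $\operatorname{corank}F_{n,k}\ge\dim\ker\eta_{k-1}$, and it is enough to exhibit $\binom{2n}{n-k}$ linearly independent elements of $\ker\eta_{k-1}$.

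The hard part is this last construction, and here I would adapt the Jones-Wenzl annihilation argument of Chen and Przytycki. The map $\eta_{k-1}$ wraps $k-1$ parallel strands around the lollipop and inserts the idempotent $f_{k-1}$; by the defining turnback-annihilation property of $f_{k-1}$, the same linear combinations of diagrams in $\mathit{Mb}_{n,0}$ that are killed in the type-$B$ setting are again sent to zero, and the telescoping identity $\sum_{j=0}^{k-1}\big(\binom{2n}{n-j}-\binom{2n}{n-j-1}\big)=\binom{2n}{n}-\binom{2n}{n-k}$ confirms there are exactly $\binom{2n}{n-k}$ of them; their linear independence in $\mathcal{S}((\mathit{Mb}_n)_1)$ is automatic since $\mathit{Mb}_{n,0}$ is part of the basis. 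The genuinely new verification, and the main obstacle, is that the white-marker bookkeeping built into $\beta_{k-1}$ keeps the inner-boundary arcs, and hence the $x$, $y$, and $w$ curves coming from the $\mathit{Mb}_{n,1}$ block, entirely undisturbed, so that these relations persist for the full map on $(\mathit{Mb}_n)_1$ and the through-strand count driving the Jones-Wenzl annihilation is unaffected by the presence of crosscap-crossing arcs. Establishing this compatibility rigorously is precisely the point at which the analogous attempt for the entire determinant of type $\mathit{Mb}$ breaks down.
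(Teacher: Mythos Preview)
Your proposal is correct and follows essentially the same route as the paper: reduce to a corank bound at $z=(-1)^{k-1}T_k(d)$, realize the specialized matrix via $F_{n,k}$ as the Gram matrix of the images under $\eta_{k-1}=\beta_{k-1}\circ\psi_{k-1}\circ\varphi$, and invoke Chen--Przytycki on the $B_{n,0}$ block. The only cosmetic difference is that the paper bounds $\dim\operatorname{Im}(\eta_{k-1})$ rather than $\dim\ker(\eta_{k-1})$, observing that the target of $\beta_{k-1}$ splits as $\mathcal{S}(B_{n+k-1,0})\oplus\mathcal{S}(\mathit{Ann}_4^{n+k-1})$ with $B_{n,0}$ landing entirely in the first summand and $B_{n,1}$ in the second; this direct-sum decomposition makes the ``compatibility'' concern in your final paragraph automatic, since relations supported on $\mathit{Mb}_{n,0}$ are trivially relations for the full map.
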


\begin{proof}
We modify and build upon Chen and Przytycki's proof in \cite{CP} by showing that for $k\geq 1$, the nullity of $G^{(\mathit{Mb})_1}_n(d,(-1)^{k-1} T_k(d), x, y, w)$ for $d=-A^2-A^{-2}$ is at least $\binom{2n}{n-k}$. 

\ 

Recall that for $F_{n, k} = (\xi_{k-1}(\langle m_i, m_j\rangle_{\mathit{Mb}}))_{1 \leq i, j \leq \binom{2n}{n-1}+\binom{2n}{n}}$ we have $$G^{(\mathit{Mb})_1}_n(d,(-1)^{k-1} T_k(d), x, y, w) = \frac{1}{\Delta_{k-1}} F_{n,k}.$$

Furthermore, by Lemma \ref{Lemma:maps}, we have $\xi_{k-1}(\langle m_i, m_j\rangle_{\mathit{Mb}}) = \langle  \eta_{k-1} (m_i) , \eta_{k-1} (m_j) \rangle_B$ where $\eta_k = \beta_k \circ \psi_k \circ \varphi$. Since $\varphi$ is a bijection between $(Mb_n)_1$ and $(B_n)_1$, then it suffices to show that $\beta_{k-1} \circ \psi_{k-1}((B_n)_1)$ is contained in a subspace of dimension $\binom{2n}{n-1}+\binom{2n}{n}-\binom{2n}{n-k}$ in $\mathcal{S}_{2, \infty}(\mathit{Ann}_{4}^{n+k-1}) \oplus \mathcal{S}_{2, \infty}(D^2 \times I, n+k-1)$. As in \cite{CP}, it suffices to show that
$$\text{dim}(\text{Im}(\beta_{k-1})) \leq  \binom{2n}{n-1}+\binom{2n}{n}-\binom{2n}{n-k}.$$

Recall that $\mathcal{S}((B_n)_1) = \mathcal{S}(B_{n, 0}) \oplus \mathcal{S}(B_{n, 1})$ where $\mathcal{S}(B_{n, 0})$ and $\mathcal{S}(B_{n, 1})$ are free $R$-modules generated by $B_{n, 0}$ and $B_{n, 1}$, respectively. Since $B_{n,0} \cap B_{n, 1} = \varnothing$, then 
$$\beta_{k-1} \circ \psi_{k-1}(B_{n, 0} \oplus B_{n, 1}) = \beta_{k-1} \circ \psi_{k-1} (B_{n,0} \oplus \{ 0 \}) \oplus \beta_{k-1} \circ \psi_{k-1}( \{0 \} \oplus B_{n, 1}). $$

Furthermore, $|B_{n,1}|= \binom{2n}{n-1}$. Therefore,
$$\dim (\beta_{k-1} \circ \psi_{k-1}((B_n)_1)) \leq \dim (\beta_{k-1} \circ \psi_{k-1}(B_{n,0} \oplus \{0\}) )+ \binom{2n}{n-1}. $$

The set $\beta_{k-1} \circ \psi_{k-1}(B_{n,0} \oplus \{0\})$ can be viewed as a set of crossingless connections between $2(n+k-1)$ points in the disc by cutting along the lollipop. Therefore, it suffices to prove that $\beta_{k-1} \circ \psi_{k-1}(B_{n,0} \oplus \{0\})$ is contained in a subspace of dimension $\binom{2n}{n} - \binom{2n}{n-k}$ in $\mathcal{S}_{2, \infty}(D \times I, n+k-1)$ which was already proven in \cite{CP}. 
\end{proof}

\section{Future directions}\label{FutureDirections}

We present a conjectured formula for type $(\mathit{Mb})_1$.
This conjecture has been verified for $n \leq 3$. 
\begin{conjecture}\label{Conjecture}\

Let $R = \mathbb{Z}[A^{\pm 1},w,x,y,z].$ Then,
the Gram determinant of type $(Mb)_1$ for $n \geq 1$, is:
\begin{eqnarray*}
D^{(Mb)_1}_n &=&  \left[(d-z)((d + z)w -2xy) \right]^{\binom{2n}{n-1}} \prod_{k=2}^n (T_k(d)^2-z^2)^{\binom{2n}{n-k} } \prod\limits_{k=2}^n (T_{2k}(d)-2)^{\binom{2n}{n-k}},
\end{eqnarray*}
where $T_k(d)$ is the $k^{th}$ Chebyshev polynomial of the first kind and $d = -A^2-A^{-2}$.
\end{conjecture}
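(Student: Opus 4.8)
The plan is to peel off the $x,y$--dependence by a scaling symmetry, to recover the $k=1$ contribution from Proposition \ref{GRAMMB1:Prop2}, to recover the remaining type-$B$-like factors from the known determinant $D_n^B$, and thereby to isolate a single new ``crosscap'' determinant that carries the $T_{2k}(d)-2$ factors. By Lemma \ref{lemma:bijection} we may compute $D_n^{(\mathit{Mb})_1}=\det G_n^{(\mathit{Mb})_1}$ on the enlarged basis $(B_n)_1=B_{n,0}\sqcup B_{n,1}$, using the labelling of Figure \ref{fig:labellingAnnulus}. Writing $N_0=\binom{2n}{n}$ and $N_1=\binom{2n}{n-1}$ and ordering $B_{n,0}$ before $B_{n,1}$, the matrix acquires the block form
\[
G_n^{(\mathit{Mb})_1}=\begin{pmatrix} G_n^{B} & y\,\tilde B \\ x\,\tilde C & wD_1+xy\,D_2\end{pmatrix}.
\]
Since each entry of the form is a single monomial, the top-left block is exactly the classical type $B$ Gram matrix $G_n^{B}$, the off-diagonal blocks carry precisely one $y$ (resp.\ one $x$) with $\tilde B,\tilde C$ having entries in $\mathbb Z[d,z]$, and the bottom-right block splits with disjoint support as $wD_1+xy\,D_2$ with $D_1,D_2$ having entries in $\mathbb Z[d,z]$. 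Conjugating by $\operatorname{diag}(\lambda^{1/2}I_{N_0},\lambda^{-1/2}I_{N_1})$ scales only the off-diagonal blocks, so $\det G_n^{(\mathit{Mb})_1}$ is invariant under $x\mapsto\lambda x,\ y\mapsto\lambda^{-1}y$; hence $\det G_n^{(\mathit{Mb})_1}=P(d,z,w,s)$ with $s:=xy$.

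Next I would pin down the $s$--dependence. The variable $x$ occurs only in the $N_1$ bottom rows, and in each such row the chosen monomial (from $x\tilde C$, or a bottom-right $xy$-type entry, or a bottom-right $w$-type entry) carries at most one $x$, while the top $N_0$ rows carry none; thus every term of the Leibniz expansion has $x$-degree at most $N_1$, so $\deg_x P\le N_1$ and therefore $\deg_s P\le N_1$. By Proposition \ref{GRAMMB1:Prop2}, $\bigl((d+z)w-2s\bigr)^{N_1}$ divides $P$ in the UFD $\mathbb Z[A^{\pm1},w,x,y,z]$. As $(d+z)w-2s$ has $s$-degree one, the $N_1$-th power already saturates the $s$-degree bound of $P$, so the exact quotient
\[
Q:=\frac{P}{\bigl((d+z)w-2s\bigr)^{N_1}}
\]
is a genuine polynomial with $\deg_s Q\le 0$, i.e.\ $Q=Q(d,z,w)$ is free of $x,y$.

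To determine $Q$ I specialize $x=y=0$. The off-diagonal blocks vanish and $wD_1+xy\,D_2$ collapses to $wD_1$, so $G_n^{(\mathit{Mb})_1}$ becomes block-diagonal and
\[
P(d,z,w,0)=\det\!\bigl(G_n^{B}\bigr)\,\det\!\bigl(wD_1\bigr)=D_n^{B}\,w^{N_1}\det D_1.
\]
Comparing with $P(d,z,w,0)=\bigl((d+z)w\bigr)^{N_1}Q$ and inserting $D_n^{B}=(d^2-z^2)^{N_1}\prod_{k=2}^n\bigl(T_k(d)^2-z^2\bigr)^{\binom{2n}{n-k}}$ from Theorem \ref{Theorem:TypeB} yields
\[
Q=(d-z)^{N_1}\prod_{k=2}^n\bigl(T_k(d)^2-z^2\bigr)^{\binom{2n}{n-k}}\det D_1.
\]
Substituting this back into $P=\bigl((d+z)w-2xy\bigr)^{N_1}Q$ reproduces the conjectured formula verbatim, provided one establishes the single remaining identity
\[
\det D_1=\prod_{k=2}^n\bigl(T_{2k}(d)-2\bigr)^{\binom{2n}{n-k}},
\]
where $D_1$ is the Gram matrix of $B_{n,1}$ recording the $w$-coefficient of the bilinear form.

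The heart of the matter, and where I expect the genuine difficulty, is this crosscap determinant. I would adapt the lollipop and Jones--Wenzl method of \cite{CP} used in the proof of Theorem \ref{MainTheorem}, but now pairing the two inner-boundary arcs of each $B_{n,1}$ diagram into a single curve that winds \emph{twice} around the crosscap core; decorating this doubly-wound curve with $f_{k-1}$ and evaluating the resulting Hopf link replaces the single eigenvalue $(-1)^k T_k(d)$ of the type $B$ computation by $T_{2k}(d)-2=(A^{2k}-A^{-2k})^2$. A nullity estimate showing that the image of $\beta_{k-1}\circ\psi_{k-1}$ on $\mathcal S(B_{n,1})$ has corank at least $\binom{2n}{n-k}$, together with the degree identity $(n-1)\binom{2n}{n-1}=\sum_{k=2}^n 2k\binom{2n}{n-k}$ matching $\deg_d\det D_1$ against the product, would then finish the proof. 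The delicate points are exactly those where the authors note the plain lollipop technique breaks down for full type $\mathit{Mb}$: controlling how the doubled crosscap curve meets the idempotent, and verifying that the residual $z$-curves appearing in individual entries of $D_1$ cancel in the determinant (as they must, the target being $z$-free) so that the multiplicities assemble to the stated binomial exponents.
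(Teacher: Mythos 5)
First, a point of status: the statement you are proving is Conjecture \ref{Conjecture}; the paper offers \emph{no} proof of it, only verification for $n\le 3$, so there is no author argument to compare against and the question is whether your proposal closes the conjecture on its own. It does not, but the reductive part is correct and actually goes beyond what the paper establishes. Via Lemma \ref{lemma:bijection} the block structure you describe is right: every entry of the form is a single monomial, pairings of two $B_{n,1}$-elements yield exactly one $w$ or one $xy$ (the two crosscap arcs either merge into a single curve through both crosscaps or close separately), so the bottom-right block is $wD_1+xy\,D_2$ with $D_1,D_2$ over $\mathbb{Z}[d,z]$, and the top-left block is $G_n^B$. The conjugation argument correctly shows $\det G_n^{(\mathit{Mb})_1}$ depends on $x,y$ only through $s=xy$; the Leibniz bound $\deg_s\le\binom{2n}{n-1}$ is valid since only the bottom $\binom{2n}{n-1}$ rows carry $x$, each to degree at most one; Proposition \ref{GRAMMB1:Prop2} then saturates this bound, so the quotient $Q$ is $s$-free; and the specialization $x=y=0$ together with Theorem \ref{Theorem:TypeB} pins $Q$ exactly. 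You have thus rigorously reduced the conjecture to the single identity $\det D_1=\prod_{k=2}^n\bigl(T_{2k}(d)-2\bigr)^{\binom{2n}{n-k}}$ (including the claim that $z$ cancels from $\det D_1$), and your implicit $n=2$ check $\det D_1=d^2(d^2-4)=T_4(d)-2$ agrees with Example \ref{exampleMB1}. This reformulation is genuinely useful.

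The gap is that this remaining identity --- which you yourself flag as the heart of the matter --- is only sketched, and the sketch as stated would fail. First, your remark that the residual $z$-curves in $D_1$ ``must'' cancel because the target is $z$-free is circular: it assumes the conjecture. Second, and more seriously, the factors are not pairwise coprime under the paper's normalization: $T_{2k}(d)=T_k(d)^2-2$, so $T_{2k}(d)-2=(T_k(d)-2)(T_k(d)+2)$; concretely $T_4(d)-2=d^2(d^2-4)$ contains $T_2(d)-2=d^2-4$. Hence per-$k$ nullity estimates, even if achieved, give divisibility by each factor separately but \emph{not} by the product, and your degree-plus-multiplicity endgame ($(n-1)\binom{2n}{n-1}=\sum_{k=2}^n 2k\binom{2n}{n-k}$, with monic Chebyshevs) only closes the argument once product divisibility is already known. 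In \cite{CP} and in Theorem \ref{MainTheorem} this obstruction is absent precisely because the factors $T_k(d)\pm z$ are separated by the spectral parameter $z$, specialized via $\xi_{k-1}$ to $(-1)^{k-1}T_k(d)$ (note also your eigenvalue should read $(-1)^kT_{k+1}(d)$ for the $f_k$-decorated Hopf link, per the Lickorish formula quoted in the paper); your crosscap factors $T_{2k}(d)-2$ carry no $z$, so the analogous specialization is unavailable and nullity would have to be located at special values of $A$, where the other factors of the determinant degenerate simultaneously. This is exactly the regime in which the paper says the lollipop technique ``is no longer as straightforward'' and has ``not yet been rigorously successful,'' so the decisive step of your proposal remains open rather than proved.
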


The lollipop method was proposed by Przytycki to be a suitable method to apply to type $Mb$ for a proof of the same factor given in our main theorem. In fact, the concluding arguments of the proof of Theorem \ref{MainTheorem} suggest that the new techniques given in this paper along with the lollipop method can be modified and applied to type $Mb$; albeit in a not so straightforward way.

\ 

Lemma \ref{lemma:bijection} gives insight into direct connections between type $(Mb)_1$ and a Gram determinant from a matrix created by using the set $(B_n)_1$ and the bilinear form of type $B$. We will call this determinant type $(B)_1$. As shown in the proof of the lemma, type $(Mb)_1$ is a special case of type $(B)_1$ where no distinction is made between arcs whose boundary components are both connected to the inner boundary (or outer boundary).
Another future direction is to focus on the Gram determinant of type $(B)_1$ and obtain a closed formula. Furthermore, potential connections to statistical mechanics are suggested, by investigating the blob algebra discussed in \cite{MaSa2}, when the $y$ and $x$ variables are made equal as to obtain a determinant from a symmetric matrix.

\newpage

\end{document}